\newcommand{\xrightharpoonup}[2][]{\ext@arrow 0359\rightharpoonupfill@{#1}{#2}}
\newcommand{\ve}{\varepsilon}  
\newcommand{\ds}{\displaystyle}
\def\R{\mathbb{R}}
\def\P{\mathbb{P}}
\def\E{\mathbb{E}}
\def\N{{\rm I\hspace{-0.50ex}N} }
\def\N{\mathbb{N}}
\newtheorem{lem}{Lemma}[section]
\newtheorem{thm}[lem]{Theorem}
\newtheorem{prop}[lem]{Proposition}
\newtheorem{rmq}[lem]{Remark}
\title{\bf Quantization-based approximation of reflected BSDEs with extended upper bounds for recursive quantization}
\author{\textsc{Rancy El Nmeir} \thanks{Sorbonne Universit\'e, Laboratoire de Probabilit\'e, Statistique et Mod\'elisation, Campus Pierre et Marie Curie, case 158, 4, pl. Jussieu, F-75252 Paris Cedex 5, France.} \thanks{ Universit\'e Saint-Joseph de Beyrouth, Laboratoire de Math\'ematiques et Applications, Unit\'e de recherche Math\'ematiques et mod\'elisation, B.P. 11-514 Riad El Solh Beyrouth 1107
2050, Liban.}
\& \textsc{Gilles Pag\`es}\footnotemark[1]
}
\begin{document}
	\date{}
	\maketitle
	\begin{abstract}
	We establish upper bounds for the $L^p$-quantization error, $p\in (1,2+d)$, induced by the recursive Markovian quantization of a $d$-dimensional diffusion discretized via  the Euler scheme. We introduce a {\em hybrid} recursive quantization scheme, easier to implement in the high-dimensional framework, and establish upper bounds to the corresponding $L^p$-quantization error. To take advantage of these extensions, we propose a time discretization scheme and a recursive quantization-based discretization scheme associated to a Reflected Backward Stochastic Differential Equation and estimate $L^p$-error bounds induced by the space approximation. We will explain how to numerically compute the solution of the reflected BSDE relying on the recursive quantization and compare it to other types of quantization. 
	\end{abstract}
	\paragraph{Keywords :} reflected backward stochastic differential equation, recursive quantization, optimal quantization, Euler scheme, hybrid schemes, $L^p$-error bounds, Markov chain. 
	\bigskip 
\section{Introduction}
We are interested in the discretization and the computation of the solution of the following reflected backward stochastic differential equation RBSDE with maturity $T$
\begin{equation}
\label{BSDE}
Y_t=g(X_T)+\int_t^T f(s,X_s,Y_s,Z_s)ds+K_T-K_t -\int_t^T Z_s . dW_s\, , \qquad t \in [0,T],
\end{equation}
\begin{equation}
Y_t \geq h(t,X_t) \quad \mbox {and} \quad \int_0^T \big(Y_s-h(s,X_s)\big)dK_s =0.
\end{equation}
$(X_t)_{t \geq 0}$ is a Brownian diffusion process taking values in $\R^d$ and solution to the SDE 
\begin{equation}
\label{SDE}
X_t=X_0+\int_0^t b(s,X_s)ds +\int_0^t \sigma(s,X_s)dW_s\, , \qquad X_0=x_0 \in \R^d,
\end{equation}
where the drift coefficient $b: [0,T] \times \R^d \rightarrow \R^d$ and the matrix diffusion coefficient $\sigma : [0,T] \times \R^d \rightarrow \mathcal{M}(d,q)$ are Lipschitz continuous in $(t,x)$ so that $b(.,0)$ and $\sigma(.,0)$ are bounded on $[0,T]$ and satisfy the linear growth condition 
$$\big\|\sigma(.,x)\big\|+\big\|b(.,x)\big\| \leq L_{b,\sigma} (1+\|x\|)$$
with $L_{b,\sigma}=\max\big([b]_{\rm Lip},[\sigma]_{\rm Lip}, \big\|b(.,0)\big\|_{\sup},\big\|\sigma(.,0)\big\|_{\sup}\big)$ and $\|\cdot\|$ denoting any norm on $\R^d$. $(W_t)_{t \geq 0}$ is a $q$-dimensional Brownian motion defined on the probability space $(\Omega, \mathcal{A},\P)$ equipped with its augmented natural filtration $(\mathcal{F}_t)_{t \geq 0}$ where $\mathcal{F}_t=\sigma(W_s, s\le t,\, \mathcal{N}_{\P})$, ${\cal N}_{\P}$ denotes the class of all $\P$-negligible sets of ${\cal A}$. The solution of this equation is defined as a $\R \times \R^d \times \R_+$-valued triplet $(Y_t, Z_t, K_t)$ of $\mathcal{F}_t$-progressively measurable square integrable processes. 
$K_t$ is continuous, non-decreasing, such that $K_0 = 0$ and grows exclusively on $\{t:Y_t = h(t, X_t)\}$. The driver $f(t,x,y,z): [0,T] \times \R^d \times \R \times \R^d \rightarrow \R$ is $[f]_{\rm Lip}$-Lipschitz continuous with respect to $(t,x,y,z)$, $g(X_T)$ is the terminal condition where $g:\R^d \rightarrow \R$ is $[g]_{\rm Lip}$-Lipschitz continuous and $h:[0,T] \times \R^d \to \R$ is $[h]_{\rm Lip}$-Lipschitz continuous such that $g \geq h$ for every $t$ and $x$.
Under these assumptions on $b, \sigma ,h,g$ and $f$, the RBSDE $(\ref{BSDE})$ and the SDE $(\ref{SDE})$ admit both a unique solution. The existence of a process $(Y_t,Z_t,K_t)$, solution of $(\ref{BSDE})$, was established in \cite{KarPeQue97a} where the authors also showed that this solution satisfies the following property
\begin{equation}
\label{assumptionRBSDE}
\Big\|\sup_{t \in [0,T]}|Y_t|\Big\|_{2p} \vee \|K_T\|_{2p} \vee \Big\|\int_0^T |Z_t|^2 dt \Big\|_p < \gamma_0
\end{equation}
for a finite constant $\gamma_0$ (see also $\cite{BaPaSPA}$). In general, these solutions admit no closed form. Approximation schemes are needed to approximate them. In the literature, many authors studied different types of RBSDEs, for example, in $\cite{BaPaSPA,CvMa01,KarPeQue97a, MaZhang05, MaWang09}$ and many approximation schemes were investigated: Feynman-Kac type representation formula were given in $\cite{MaZhang05}$ for the solutions of RBSDEs, a four step algorithm was developed in \cite{MaPrYong94} to solve FBSDEs, a random time scheme in \cite{Bally97}. We can also cite the study of BSDE for quasi-linear PDEs in \cite{delarue08} and quadratic BSDEs in \cite{ChRi16}. In this paper, we start by a time discretization scheme of the forward process $(X_t)_{t \in [0,T]}$, the Euler scheme, with the uniform mesh $t_k=k\Delta$, $k \in \{0,\ldots,n\}$, with $\Delta=\frac{T}{n}$. The discrete time Euler scheme $(\bar{X}_{t_k})_{0 \leq k \leq n}$ associated to the process $(X_t)_{t \in [0,T]}$ is recursively defined by
\begin{equation}
\label{SDEeulerint}
\bar{X}_{t_{k+1}}^n=\bar{X}_{t_k}^n+\Delta b(t_k,\bar{X}_{t_k}^n)+\sigma(t_k,\bar{X}_{t_k}^n)\Delta W_{t_{k+1}}, \qquad \bar{X}_{t_0}^n=X_0=x_0\in \R^d, 
\end{equation}
where $\Delta W_{t_{k+1}}=W_{t_{k+1}}-W_{t_k}$, for every $k \in \{0.\ldots,n-1\}$. This leads to consider the time discretization scheme $(\bar{Y}_t^n,\bar{\zeta}_t^n)$ associated to $(Y_t,Z_t)$ given by the following backward recursion
\begin{align}
\label{YbarTint}
\bar{Y}_{T}^n & = g(\bar{X}_{T}^n)\\
\label{Ytildekint}
\widetilde{Y}_{t_k}^n &=\E(\bar{Y}_{t_{k+1}}^n|\mathcal{F}_{t_k})+\Delta f\big(t_k,\bar{X}_{t_k}^n,\E(\bar{Y}_{t_{k+1}}^n|\mathcal{F}_{t_k}),\bar{\zeta}_{t_k}^n\big)\, , \quad k=0,\ldots,n-1,\\
\label{Zetabarkint}
\bar{\zeta}_{t_{k}}^n&=\frac{1}{\Delta}\E\big(\bar{Y}_{t_{k+1}}^n(W_{t_{k+1}}-W_{t_k})\,|\,\mathcal{F}_{t_k}\big)\, , \quad k=0,\ldots,n-1,\\
\label{Ybarkint}
\bar{Y}_{t_{k}}^n& = \widetilde{Y}_{t_{k}}^n \vee h(t_k,\bar{X}_{t_k}^n)\, , \quad k=0,\ldots,n-1.
\end{align} 
It is important to notice that, in this scheme, the conditional expectation is applied directly to $\bar{Y}_{t_{k+1}}^n$ inside the driver function $f$ depending itself on the process $Z_t$ (or $\bar \zeta_{t_k}^n$). This is slightly different of what have been already introduced and investigated in the literature. In fact, such schemes were considered for BSDE (without reflection) in $\cite{PaSa18}$ and for doubly reflected BSDE in $\cite{theseilland}$, whereas in most papers in the literature, the expectation is usually applied to the driver $f$ from the outside. In some of these papers devoted to time(-space) discretization of RBSDE, the driver does not depend on the process $Z_t$, (see \cite{BaPaSPA,BaPaBern, BoTo04,MaZhang05} for example).\\

After the time discretization, the solution of the scheme $(\ref{YbarTint})-(\ref{Ytildekint})-(\ref{Zetabarkint})-(\ref{Ybarkint})$ still admits no closed form since it involves the computation of conditional expectations which cannot be obtained analytically. Therefore, we are led to devise a space discretization scheme to approximate it. In the literature, we can find various approaches: one can cite, among others, regression methods with Monte Carlo simulations (see \cite{BoTo04,GLTV16}), the multi-step schemes methods (see \cite{BeDe07}), a hybrid approach combining Picard iterates with a decomposition in Wiener chaos (see \cite{BrLa14}), a connection with the semi-linear PDE associated to the BSDE (see \cite{HLaTaTo14}) and Monte Carlo simulations with Malliavin calculus (see \cite{BoTo04, CrMaTo10, HuNuSo11, GoTu16}). Another approach is optimal quantization introduced for RBSDEs in $\cite{BaPaPr01}$ and then developed in a series of papers ($\cite{BaPaSPA, BaPaBern, theseilland, PaSa18}$ for example), quantization-based discretization schemes have also been used in \cite{delarue} for fully coupled Forward-Backward SDEs .  In this paper, we will rely on the recursive quantization of the time-discretized Euler scheme $(\bar{X}_{t_k}^n)_{0 \leq k \leq n}$. This method, originally introduced in $\cite{PaPhPr03}$ and then studied deeply in $\cite{McW18}$ and $\cite{PaSa14}$ for one-dimensional diffusions, consists in building a Markov chain having values into a {\em grid (or quantizer)} $\Gamma_k$ of the discrete Euler scheme $\bar{X}_{t_k}$ at time $t_k$. The grids $\Gamma_k$ can be optimized in a recursive way as a kind of {\em embedded} procedure.\\

In order to explain the principle of this recursive Markovian quantization, let us first recall briefly what optimal quantization is. Assume that $\R^d$ is equipped with a norm $\|\cdot\|$ (usually the canonical Euclidean norm for our purpose). Let $X\!\in L^p_{\R^d}(\Omega,{\cal A}, \P)$ and let $N\geq 1$ be a {\em quantization level}. The aim of $L^p$-optimal quantization is to find the best approximation of $X$ in $L^p(\P)$ by a  random vector $Y$ defined on $(\Omega, {\cal A}, \P)$ taking at most $N$ values. As a first step, we may consider the grid (or quantization grid)  $\Gamma^N= Y(\Omega)=  \{x_1,\ldots,x_{_N}\}$ (with possibly repeated elements). One easily checks that, $\Gamma^N$ being fixed, the best possible choice is given by a (Borel) nearest neighbor projection of $X$ on $\Gamma^N$. It is called a Vorono\"i quantization of $X$ defined by 
\begin{equation} 
	\widehat{X}^{\Gamma^N} =\text{Proj}_{\Gamma^N}(X) := \sum_{i=1}^{N}x_i \mathds{1}_{C_{i}(\Gamma^N)}(X)
\end{equation}
where $\big(C_i(\Gamma^N)\big)_{1\leq i\leq N}$ is a Borel partition of $\R^d$ satisfying  \begin{equation}
	\label{Voronoicells}
	C_{i}(\Gamma^N)  \subset\{\xi \in \mathbb{R}^d : \|\xi-x_i\|\leq  \min_{j\neq i} \|\xi-x_j\|\}, \qquad i =1,\ldots,N.
	\end{equation} The $N$-tuple $\big(C_i(\Gamma^N)\big)_{1\le i\le N}$ is called the {\em Vorono\"i partition} induced by $\Gamma^N$. The induced $L^p$-quantization error associated to the grid $\Gamma^N$ is defined by
	\begin{equation} 
	\label{quanterror}
	e_p(\Gamma^N,X)   = \|X-\widehat{X}^{\Gamma^N}\|_p
	\end{equation}
	where $\|.\|_p$ denotes the $L^p(\P)$-norm. The optimal quantization problem boils down to finding the grid $\Gamma^N$ that minimizes this error i.e. solving the problem
\[
e_{p,N}(X) := \inf_{\Gamma, |\Gamma|\le N} e_p(\Gamma, X).
\]
where $|\Gamma|$ denotes the cardinality of the grid $\Gamma$. A solution to this problem exists, as established in $\cite{GraLu00,Pages15,Pages18}$ for example, and is called an $L^p$-optimal quantization grid of (the distribution of) $X$. The corresponding quantization error converges to $0$ as $N$ goes to $+\infty$ and its rate of convergence is given by two well known results exposed in the following theorem.
	\begin{thm} 
		\label{Zadoretpierce}
		\noindent $(a)$ {\rm Zador's Theorem (see \cite{Zador82})}:  
		Let $X \in L_{\mathbb{R}^d}^{p+\eta}(\P)$, $\eta>0$, 
		with distribution $P$ having the following decomposition $P=h.\lambda_d+\nu$  where $\lambda_d$ denotes the Lebesgue measure on $(\R^d, {\cal B}or(\R^d))$ and $\nu \perp \lambda_d$ (singular). 
		Then, 
		\begin{equation}
		\label{Zador}
		\lim_{N\rightarrow +\infty} N^{\frac{1}{d}} e_{p,N}(X) = \tilde{J}_{p,d} \|\varphi\|^{\frac{1}{p}}_{L^{\frac{p}{p+d}}(\lambda_d)}
		\end{equation}
		where $\tilde{J}_{p,d} = \ds \inf_{N \geq 1} N^{\frac{1}{d}} e_{p,N}(\mathcal{U}([0,1]^d)) \in (0,+ \infty)$.  \\
		\noindent $(b)$ {\rm Extended Pierce's Lemma (see \cite{LuPa08,Pages18})}:
		Let $p,\eta >0 $. There exists a constant $\kappa_{d,p,\eta} \in (0,+\infty)$  such that, for any random vector $X: (\Omega, \mathcal{A},\P)\rightarrow \R^d$,
		\begin{equation}
		\label{Pierce}
		\forall N\geq 1,\quad e_{p,N}(X) \leq \kappa_{d,p,\eta} \sigma_{p+\eta} (X) N^{- \frac{1}{d}}
		\end{equation}
		where, for every $p \in (0, + \infty), \, \sigma_p(X) = \ds \inf_{a \in \mathbb{R}^d} \|X-a\|_p$ is the $L^p$-(pseudo-)standard deviation of $X$.
\end{thm}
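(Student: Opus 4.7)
Since Theorem \ref{Zadoretpierce} collects two classical results of quantization theory, the strategy is to prove the extended Pierce's lemma first (it is a non-asymptotic statement with a direct constructive proof) and then use it, together with a localization argument, to establish Zador's theorem.

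\textbf{Extended Pierce's lemma (part (b)).} The plan is to reduce to the one-dimensional case and then tensorize. In dimension $d=1$, fix $a\!\in\R$ such that $\sigma_{p+\eta}(X)=\|X-a\|_{p+\eta}$; for $N\geq 1$, I would build an explicit grid by placing points on a geometric scale in the tails and a uniform one in a central interval whose length is calibrated to $\sigma_{p+\eta}(X)$. Bounding the local quantization error by the diameter of each interval and summing, one obtains an $L^p$-error of the order of $\sigma_{p+\eta}(X)N^{-1}$ up to a constant depending only on $p$ and $\eta$, the tail contribution being controlled by H\"older's inequality (this is exactly where the integrability gap $\eta>0$ is used). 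For general $d$, I would pick the product grid $\Gamma^1\times\cdots\times\Gamma^d$ with $|\Gamma^i|=N_i$ and $\prod_i N_i\le N$, taking $N_i\sim N^{1/d}$. The componentwise nearest-neighbor projection is an admissible (not necessarily Vorono\"i) quantizer, and the true Vorono\"i quantizer can only do better, which yields the rate $N^{-1/d}$ with a constant $\kappa_{d,p,\eta}$ that is universal in $X$.

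\textbf{Zador's theorem (part (a)).} The plan is an upper/lower bound sandwich. The constant $\tilde J_{p,d}$ is defined by the infimum over $N$ of $N^{1/d}e_{p,N}(\mathcal{U}([0,1]^d))$, and a scaling plus subadditivity argument (decompose the unit cube into $k^d$ equal subcubes and distribute points evenly) shows that the infimum is attained in the limit and is finite and positive. For a compactly supported distribution with density $\varphi$, I would partition the support into small cubes $B_\alpha$ of side $\delta$ on which $\varphi$ is essentially constant, and allocate to $B_\alpha$ a number of points $N_\alpha$ proportional to $\varphi(B_\alpha)^{d/(p+d)}$. Summing the local contributions and letting $\delta\to 0$ then $N\to\infty$ yields, via a Riemann-sum argument, the upper bound $\limsup N^{1/d}e_{p,N}(X)\le \tilde J_{p,d}\,\|\varphi\|_{L^{p/(p+d)}}^{1/p}$. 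The matching lower bound is obtained by a concavity/Jensen argument on the Vorono\"i partition, reducing the contribution of each cell to the one-cube minimum.

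\textbf{Main obstacle.} The delicate step is the extension of Zador's asymptotics from compactly supported absolutely continuous laws to the general decomposition $P=\varphi\cdot\lambda_d+\nu$. One truncates $X$ to a large ball of radius $R$, designs an optimal grid for the truncated variable, and must then show that both the tail of the absolutely continuous part and the whole singular measure $\nu$ contribute only $o(N^{-1/d})$ to $e_{p,N}(X)$. This is precisely where part (b) re-enters: Pierce's inequality, applied to $X\mathds{1}_{\{\|X\|>R\}}$ and to a suitable compactly supported approximation of $\nu$, provides the uniform tail control needed to close the argument, and makes the assumption $X\!\in L^{p+\eta}$ unavoidable.
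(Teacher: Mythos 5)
There is an important structural point first: the paper gives no proof of Theorem \ref{Zadoretpierce} at all — both parts are quoted as background results with references (\cite{Zador82}, \cite{GraLu00}, \cite{LuPa08}, \cite{Pages18}) — so your sketch has to be measured against the classical proofs rather than an internal argument. At that level your architecture is the standard one (self-similarity for the uniform cube, piecewise-constant densities, truncation for the unbounded absolutely continuous part; explicit one-dimensional grids plus tensorization for Pierce, which is a legitimate alternative to the random-quantization argument of \cite{LuPa08,Pages18}), and the use of part $(b)$ to control the truncation tail of the absolutely continuous part is exactly right. But two steps would fail as written.

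First, the singular component: Pierce's inequality only yields $e_{p,N}\le \kappa\,\sigma_{p+\eta}\,N^{-1/d}$, i.e. a bound of the \emph{same} order as the main term, and $\sigma_{p+\eta}$ of (any compactly supported approximation of) the fixed measure $\nu$ does not tend to $0$ — unlike $\sigma_{p+\eta}\big(X\mathds{1}_{\{\|X\|>R\}}\big)$, which is why truncation plus Pierce works for the tail but cannot show that $\nu$ contributes $o(N^{-1/d})$. What is needed is the separate lemma (Graf--Luschgy) that a measure singular with respect to $\lambda_d$ has vanishing $d$-dimensional quantization coefficient: one covers a Lebesgue-null carrier of $\nu$ by cubes of arbitrarily small total volume $\varepsilon$ and quantizes inside them with a fine uniform grid, the factor $\varepsilon^{1/d}$ producing the required $o(N^{-1/d})$; the matching lower bound is also genuinely delicate and is not delivered by a Jensen argument on Vorono\"i cells alone. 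Second, your one-dimensional Pierce construction: a uniform grid on a central interval calibrated to $\sigma_{p+\eta}(X)$ with the tail handled by H\"older/Markov gives only the rate $N^{-\eta/(p+\eta)}$, and a geometric tail grid with $N$ points either has ratio $1+O(1/N)$ (so it does not reach far enough for the moment bound to kill the remaining tail) or spoils the local error; the actual proofs achieve $N^{-1}$ under a bare $(p+\eta)$-moment by taking spacing that grows polynomially with the distance to the center (equivalently, quantile grids of Pareto-type laws), or by the random-quantization argument of \cite{LuPa08,Pages18}. The tensorization step itself, with $N_i\sim N^{1/d}$ points per coordinate and $\sigma_{p+\eta}(X^{(j)})\le C\,\sigma_{p+\eta}(X)$, is sound.
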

\noindent An important property, shared by quadratic optimal quantizers, is the stationarity property: an $L^2$-optimal quantizer $\Gamma^N$ is said to be stationary if 
	\begin{equation}
	\label{RBSDE:stationary}
	\E(X|\widehat{X}^{\Gamma^N})= \widehat{X}^{\Gamma^N}.
	\end{equation}
Let us now explain what recursive quantization is. If we define the Euler operator with step $\Delta$ by $$\mathcal{E}_k(x,\ve_{k+1})=x+\Delta b(t_k,x) +\sqrt{\Delta} \sigma(t_k,x) \ve_{k+1}$$
where $(\ve_k)_{0 \leq k \leq n}$ is an i.i.d. sequence of random variables with distribution $\mathcal{N}(0,I_q)$, then the recursive quantization $(\widehat{X}_{t_k})_{0 \leq k \leq n}$ of $(\bar{X}_{t_k}^n)_{0 \leq k \leq n}$ is defined by  $\widehat{X}_{t_0}=\bar{X}_{t_0}^n=x_0$ and
\begin{equation}
\label{quantifrecursiveint}
\left\{
\begin{array}{rl}
\widetilde{X}_{t_{k}} &=\mathcal{E}_{k-1}(\widehat{X}^{\Gamma_{k-1}}_{t_{k-1}},\varepsilon_{k}),  \\
\smallskip
\widehat{X}_{t_k}^{\Gamma_{k}} & =\mbox{Proj}_{\Gamma_k}(\widetilde{X}_{t_k}), \qquad \quad \forall k= 1, \ldots, n
\end{array}
\right .
\end{equation}
where $(\Gamma_k)_{0 \leq k \leq n}$ is a sequence of optimal quantizers of $(\widetilde{X}_{t_k})_{0 \leq k \leq n}$ of size $N_k$, $k=0,\ldots,n$. The optimal quantizers $(\Gamma_k)_{1 \leq k \leq n}$ can be either quadratic or $L^p$-optimal quantizers, we will detail the difference between these two frameworks later in the paper. The main advantage of this method is that it preserves the Markov property of the Euler scheme with respect to the  filtration $({\cal F}_{t_k})_{0\le k \le n}$, the process $\widehat{X}_{t_k}$ is $\mathcal{F}_{t_k}$-measurable for every $k \in \{0,\ldots,n\}$. In fact, the transition matrices $(p_{ij}^k)_{1 \leq i,j\leq N_k}$ where $p_{ij}^k= \P\big( \widehat{X}_{t_{k+1}} \in C_j(\Gamma_{k+1}) \,|\, \widehat{X}_{t_k} \in C_i(\Gamma_{k}) \big)$ and the initial distribution characterize the distribution of the Markov chain $(\widehat{X}_{t_k})_{k \geq 0}$, which was not the case with the optimal quantization in $\cite{PaSa14}$ for example. This Markov property will bring much help to carry on computations of the weights $p_i^k$ of the Vorono\"i cells and the transition weights $p_{ij}^k$, as well as with the quantized scheme of the RBSDE itself.\\

Going back to our problem, we consider, in this paper, the recursive quantization scheme associated to $(\ref{YbarTint})$-$(\ref{Ytildekint})$-$(\ref{Zetabarkint})$-$(\ref{Ybarkint})$ based on the recursive quantization $(\widehat{X}_{t_k})_{0 \leq k \leq n}$ of the Euler scheme $(\bar{X}_{t_k}^n)_{0 \leq k \leq n}$. It is defined recursively in a backward way as follows:
\begin{align}
\label{YchapTint}
\widehat{Y}_{T}^n & = g(\widehat{X}_{T})\\
\label{Zetachapkint}
\widehat{\zeta}_{{t_k}}^n&=\frac{1}{\Delta}\E\, \big(\widehat{Y}_{t_{k+1}}^n(W_{t_{k+1}}-W_{t_k})\,|\, \mathcal{F}_{t_k}\big)\, , \quad  k=0,\ldots,n-1,\\
\label{Ychapkint}
\widehat{Y}_{t_{k}}^n& = \max\left(h_k(\widehat{X}_{t_k})\, ,\, \E\, \big(\widehat{Y}_{t_{k+1}}^n\,|\, \mathcal{F}_{t_k}\big)+\Delta f\big(t_k,\widehat{X}_{t_k},\E\big(\widehat{Y}^n_{t_{k+1}}\,|\,\mathcal{F}_{t_k}\big),\widehat{\zeta}^n_{t_k}\big) \right)\, , \quad k=0,\ldots,n-1,
\end{align} 
where $(\widehat{X}_{t_k})_{0 \leq k \leq n}$ is the recursively quantized Euler scheme associated to $(\bar{X}_{t_k}^n)_{0 \leq k \leq n}$ given by $(\ref{quantifrecursiveint})$. \\
As a preliminary step, we are interested in estimating the $L^p$-quantization error $\|\widehat{X}_{t_k}-\bar{X}_{t_k}^n\|_p$, not only for $p=2$ like in $\cite{PaSa14}$ but for any $p\in (1,2+d)$. The fact that we are limited to $p < 2+d$ will become clear later in the paper, as well as the type of optimal quantizers $\Gamma_k$ of $\widetilde X_{t_k}$ needed to obtain satisfactory upper bounds for the $L^p$-quantization error. Note that in the quadratic case $p=2$, the proof was based on a Pythagoras property which cannot be applied in a general framework. Furthermore, we introduce a kind of {\em hybrid} recursive quantization where the white noise $(\varepsilon_k)_{0\leq k \leq n}$ is replaced by its (already computed) quantized version $(\widehat{\ve}_k)_{0 \leq k \leq n}$.\\

In a second part, we will proceed with the time and space discretization of the RBSDE $(\ref{BSDE})$, as explained briefly before, and give more details about these schemes. We establish a priori estimates for the time discretization error $\|Y_{t_k}-\bar{Y}_{t_k}^n\|_{_2}$ in a quadratic case. Although time discretization have already been studied in the literature (see \cite{BaPaSPA,BoTo04,MaZhang05, PaSa18, Zhang04}), our approach is still different mostly because of the combination of the reflection in the backward SDE and the conditional expectation applied directly to $\bar Y_{t_k}^n$ and $\widehat Y_{t_k}^n$ inside the driver $f$ depending itself on the process $Z_t$ (or its approximations). Likewise, estimates for the space discretization error $\|\bar{Y}_{t_k}^n-\widehat{Y}_{t_k}^n\|_p$ in $L^p$ for $p \in (1,2+d)$ will be established. To illustrate these theoretical results, we detail the numerical techniques available to compute the recursive quantization $\widehat{X}_{t_k}^n$ of $\bar X_{t_k}^n$, for every $k \in \{1, \ldots,\ldots n\}$, their distributions and the corresponding transition weight matrices. Moreover, we will explain how to compute numerically the solution of the discretized scheme $(\ref{YchapTint})$-$(\ref{Zetachapkint})$-$(\ref{Ychapkint})$ associated to the RBSDE $(\ref{BSDE})$. These computations will be useful to carry on numerical tests and experiments illustrating the above error bounds.  One of the most important applications of these quantization-based discretizations is the pricing of American options for which the driver $f$ is equal to $0$, among other examples (with a non-zero driver) that will be presented at the end of this paper. This link between BSDEs and the pricing of financial options have been first introduced in $\cite{KarPeQue97b}$. \\

Throughout this paper, we will replace, for convenience, the indices $t_k$ by $k$ for $k \in \{0,\ldots,n\}$, i.e. we will use, for example, $\widehat{X}_{k}$ instead of $\widehat{X}_{t_k}$. Also, we will replace $f(t_k,x,y,z)$ by $f_k(x,y,z)$, $b(t_k,.)$ by $b_k(.)$ and $\sigma(t_k,.)=\sigma_k(.)$. And, we will omit the $n$ in $\bar{Y}^n_{k+1}, \bar X^n_{k+1},$ etc.\\

This paper is organized as follows: In section $\ref{QR}$, we provide some short background on recursive quantization and establish the new $L^p$-error bounds for $p\in (1,2+d)$, of the recursive quantization error as well as those of the {\em hybrid} recursive quantization error. Section $\ref{timedisc}$ is devoted to the time discretization of the RBSDE and to the estimation of the corresponding error. The space disretization of the RBSDE will be treated in Section $\ref{spacedisc}$. In Section $\ref{algorithmics}$, we will present the numerical techniques to compute the recursive quantizers and the solution of the RBSDE. Finally, Section $\ref{examples}$ is devoted to several numerical examples. 
\section{Recursive Quantization: background, $L^p$-error bounds and hybrid schemes.}
\label{QR}
In this section, we study the discretization of the forward process $(X_t)_{t \geq 0}$. It is a Brownian diffusion process taking values in $\R^d$, solution to the SDE $(\ref{SDE})$ given in the introduction and recalled below
\begin{equation*}
X_t=X_0+\int_0^t b(s,X_s)ds +\int_0^t \sigma(s,X_s)dW_s\, , \qquad X_0=x_0 \in \R^d.
\end{equation*}
First, we start by the time discretization and we present the Euler scheme $(\bar{X}_{t_k})_{0 \leq k \leq n}$, with uniform mesh $t_k=k\Delta$ for $k \in \{0,\ldots,n\}$ and $\Delta=\frac{T}{n}$, associated to the process $(X_t)_{t \in [0,T]}$ which is recursively given by
\begin{equation}
\label{RBSDE:SDEeuler}
\bar{X}_{t_{k+1}}=\bar{X}_{t_k}+\Delta b_k(\bar{X}_{t_k})+\sigma_k(\bar{X}_{t_k})( W_{t_{k+1}}-W_{t_k}), \qquad \bar{X}_0=X_0=x_0, 
\end{equation}
where $W_{t_{k+1}}-W_{t_k}=\sqrt{\Delta} \ve_{k+1}$, for every $k \in \{0,\ldots,n-1\}$ and $(\ve_k)_{0 \leq k \leq n}$ is a sequence of i.i.d. random variables with distribution $\mathcal{N}(0,I_q)$. Its continuous counterpart, the {\it genuine Euler scheme}, is given by 
\begin{equation}
\label{genuineEuler}
d\bar{X}_t=b(\underline{t},\bar{X}_{\underline{t}})dt+\sigma(\underline{t},\bar{X}_{\underline{t}})dW_t
\end{equation} 
where $\underline{t}=t_k$ when $t \in [t_k,t_{k+1})$. This process satisfies for every $p \in (0,+\infty)$ and every $n \geq 1$, (see \cite{BoLe93})
$$\Big\|\sup_{t\in [0,T]}X_t\Big\|_p+\sup_{n \geq 1} \Big\|\sup_{t\in [0,T]}\bar{X}_t\Big\|_p \leq C_{b,T,\sigma}(1+|x_0|) \qquad \mbox{and} \qquad \Big\|\sup_{t \in [0,T]} |X_t-\bar{X}_t|\Big\|_p \leq C_{b,T,\sigma}\, \sqrt{\Delta} (1+|x_0|)$$
where $C_{b,T,\sigma}$ is a positive constant depending on $p,T,b$ and $\sigma$.\\  

After the time discretization, one must proceed with space discretization schemes. As introduced, we consider in this paper the approximation of the Euler scheme $(\bar{X}_{t_k})_{0 \leq k \leq n}$ by recursive quantization.
\subsection{Background}
Our aim is to design, for $k \!\in \{0,\ldots,n\}$, optimal quantizers $\Gamma_k$ of size $N_k$ of a function of the discrete Euler scheme $(\bar{X}_k)_{0 \leq k \leq n}$. In other words, we want to find the grid $\Gamma_k$ that minimizes the $L^p$-distortion function $G_k^p(\Gamma)=\E\Big[\mbox{dist}\big(\mathcal{E}_{k-1}(\bar{X}_{k-1},\ve_{k}),\Gamma \big)^p\Big]$ corresponding to $\mathcal{E}_{k-1}(\bar{X}_{k-1},\ve_{k})$ 
where $$\mathcal{E}_{k-1}(x,\ve_{k})=x+\Delta b_k(x) +\sqrt{\Delta} \sigma_k(x) \ve_{k}$$
and $(\ve_k)_k$ is an i.i.d. sequence of $\mathcal{N}(0,I_q)$-distributed random vectors independent from $X_0$.\\

Since $\bar{X}_0=X_0=x_0$ is fixed, its quantizer is given by $\Gamma_0=\{x_0\}$. Then, we compute $\widetilde{X}_1=\mathcal{E}_0(\widehat{X}_0^{\Gamma_0},\ve_1)$ and we build an optimal quantization grid $\Gamma_1$ of size $N_1$ that minimizes $G^p_1(\widetilde{X}_1,\Gamma)$ on the set of grids $\Gamma$ of size $N_1$ (see Section $\ref{algorithmics}$). Doing so, we are able to define the quantization of $\bar{X}_1$ by  $\widehat{X}_1^{\Gamma_1}=\mbox{Proj}_{\Gamma_1}(\widetilde{X}_1)$. Repeating this procedure, we define a(n optimized) recursive quantization of $(\bar{X}_k)_{0 \leq k \leq n }$ by the following recursion: $\widehat{X}_0=\bar{X}_0=x_0$ and
\begin{equation}
\label{RBSDE:quantifrecursive}
\left\{
\begin{array}{rl}
\widetilde{X}_{k} &=\mathcal{E}_{k-1}(\widehat{X}^{\Gamma_{k-1}}_{k-1},\ve_{k}),  \\
\smallskip
\widehat{X}_{k}^{\Gamma_{k}} & =\mbox{Proj}_{\Gamma_k}(\widetilde{X}_k), \qquad \quad \forall k= 1, \ldots, n.
\end{array}
\right .
\end{equation}
In practice, we ask the grids $\Gamma_k$ to share some optimality properties, typically to be $L^p$-optimal or in higher dimension to be a product grid with optimal marginals, etc. 
For that purpose, the following identities play a crucial role: the $L^p$-distortion function associated to $\Gamma_k=(x_1^k, \ldots, x_{N_k}^k)$ is approximated by
\begin{equation}
\label{distfunction}
G_k^p(x_1^k, \ldots, x_{N_k}^k)=\E[\mbox{dist}(\widetilde{X}_k,\{x_1^k, \ldots, x_{N_k}^k\})^p]=\sum_{i=1}^{N_k} \E[\mbox{dist} (\mathcal{E}_{k-1}(x_i^{k-1},\ve_{k}),x_i^k)^p] \P\big(\widehat{X}_k^{\Gamma_k}\in C_i(\Gamma_k)\big)
\end{equation}
where $\P\big(\widehat{X}_k^{\Gamma_k}\in C_i(\Gamma_k)\big)$ is the weight of the Vorono\"i cell of centroid  $x_i^k \in \Gamma_k$. Note that one can write the distortion function as a function of the grid $\Gamma_k$ but writing it as a function of an $N_k$-tuple is needed in order to talk about its differentiability. In fact, if the $N_k$-tuple $(x_1^k, \ldots, x^k_{N_k})$ has pairwise distinct components and the boundaries of the Vorono\"i diagram $\big(\partial C_i(\Gamma_k)\big)_{1 \leq i \leq N_k}$ are negligible w.r.t. the distribution of $\widetilde X_{k}$, then the gradient of the differentiable $L^p$-distortion function is given by 
$$\nabla G_k^p(x_1^k, \ldots, x_{N_k}^k)=p \Big(\E\big[ \mathds{1}_{ \widetilde X_k\in C_i(\Gamma_k)}(x_i^k-\widetilde X_k)^{p-1}\big] \Big)_{1 \leq i \leq N_k}.$$
Note that since the grid $\Gamma_k$ has pairwise distinct components for every $k \in \{0,\ldots,n\}$, the distribution of $\widetilde X_k$ exists as soon as $\sigma \sigma^*$ is invertible. 
From now on, we denote $\widehat{X}_k$ instead of $\widehat{X}_k^{\Gamma_k}$ for simplicity.
\subsection{$L^p$-error bounds for recursive quantization }
Our aim is to establish $L^p$-upper bounds for the recursive quantization error $\|\bar X_{t_k} -\widehat X_{t_k}\|_p$ for $p \in (1,2+d)$ and $k \in \{0,\ldots,n\}$. As explained, the recursive quantization schemes of $\bar X_{t_k}$ are based on optimal quantization sequences of $\widetilde X_{t_k}$ which can be either quadratic or $L^p$-quantization sequences, $p\neq 2$. The more interesting case is when we rely on $L^2$-optimal quantization because, from an algorithmic point of view, one has direct access to optimal quadratic quantizers since they are stationary and the algorithms used to produce optimal quantizers are either directly based on the stationarity property or easier to manage in a quadratic framework. Nevertheless, establishing an upper bound for the error $\|\bar{X}_{t_k}-\widehat{X}_{t_k}\|_p$ where $\widehat{X}_{t_k}$ is itself an $L^p$-optimal quantizer of $\widetilde{X}_{t_k}$ still seems a natural track to consider.\bigskip\\
\noindent {\bf $L^2$-optimal quantization}\smallskip \\
We consider the case where, for every $k \in \{1, \ldots, n\}$, $\widehat{X}_{t_k}$ is a quadratic optimal quantization of $\widetilde{X}_{t_k}$, hence it is stationary in the sense of $(\ref{RBSDE:stationary})$ (see \cite{Pages18}). In the following, we assume that $\Delta \in [0,\Delta_{\max}), \, \Delta_{\max}>0$. Note that for the Euler scheme, one can have $\Delta_{\max}=\frac{T}{n_0}$ if we consider schemes with step $\Delta=\frac{T}{n}$ and a number of steps $n>n_0$ for some $n_0>0$. 
\begin{thm}
	\label{erreurXQR}
	Let $p \in (1,2+d)$, $(\bar X_{k})_{0 \leq k \leq n}$ defined by $(\ref{RBSDE:SDEeuler})$ and $(\widehat{X}_{k})_{0 \leq k \leq n}$ the corresponding recursive quantization sequence defined by $(\ref{RBSDE:quantifrecursive})$. Assume that, for every $k \in \{0,\ldots,n\}$, $\widehat X_k$ is a stationary quadratic optimal quantization of $\widetilde{X}_k$ of size $N_k$ in the sense of $(\ref{RBSDE:stationary})$, with $\widehat{X}_0=\bar{X}_0=x_0 \in \R^d$.
	For every $k\in \{1, \ldots, n\}$ and every $\delta \in (0,1]$, 
	$$\|\bar{X}_{k}-\widehat{X}_{k}\|_p \leq \big(\widetilde{K}_{d,2,2+\delta,p} \vee \kappa_{d,2,\delta}\big) \sum_{l=1}^{k} [\mathcal{E}_k]_{\rm Lip}^{k-l} C_{2+\delta,b,\sigma,T}(l)^{\frac{1}{2+\delta}} N_l^{-\frac{1}{d}} $$
	where	$\kappa_{d,2,\delta}$ is the constant from Pierce's Lemma $\ref{Zadoretpierce}(b)$, 
	$$  \widetilde{K}_{d,2,2+\delta,p} \leq 2^{\frac{p(2+\delta)}{(2+d)^2-dp}} V_d^{-\frac{1}{2+d}}\kappa_{X,2}^{\frac{1}{2+d}}\min_{\varepsilon \in (0,\tfrac13)}\Big[(1+\ve)\varphi_2(\varepsilon)^{-\frac{1}{d+2}}\Big] \left(\int_{\R^d}\big(1 \vee \|x\| \big)^{-\frac{(d+2-p)(2+\delta)}{p}}dx \right)^{\frac{1}{2+d}}$$
	 with $\kappa_{X,r}$ a finite positive constant independent from $N_l$, $V_d$ the volume of the hyper-unit ball and $\varphi_2(u)=\big(\frac{1}{3^2}-u^2\big)u^d$,
	$$[\mathcal{E}_k]_{\rm Lip}=
	\left\{
	\begin{array}{ll}
	e^{\Delta \big(s[b]_{\rm Lip}+c_s^{(1)}+c^{(3)}_{s,\Delta_{\max},\ve_{k+1}}  [\sigma]_{\rm Lip}^s\big)/p} & \mbox{ if } \; p \in (1,2)\\
	e^{\Delta \big(p[b]_{\rm Lip}+c_p^{(1)}+ c^{(3)}_{p,\Delta_{\max},\ve_{k+1}}[\sigma]_{\rm Lip}^p\big)/p}& \mbox{ if }\; p \in [2,2+d)
	\end{array}	
	\right.
	$$
	with $s=p+1>2$, $c_p^{(1)}=2^{(p-3)_+}\frac{(p-1)(p-2)}{2}$ and $c^{(3)}_{p,\Delta_{\max},\ve_{k+1}}=2^{(p-3)_+}(p-1) \E|\ve_{k+1}|^p \big(1 +\frac{p}{2} \Delta_{\max}^{\frac{p}{2}-1}\big)$
	and $$ C_{2+\delta,b,\sigma,T}(l)= e^{t_k (C_1+C_2)} |x_0|^{2+\delta}+ \frac{C_3}{C_1+C_2} \left(e^{t_{k-1} (C_1+C_2)}-1  \right)$$ where $C_1,C_2$ and $C_3$ are defined in Lemma $\ref{Xtildequad}$.
\end{thm}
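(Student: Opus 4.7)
\textbf{Proof plan for Theorem \ref{erreurXQR}.}

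My approach is induction on $k$, combining Lipschitz propagation of the Euler operator with a quantization-error bound at each step. The key decomposition is
$$\|\bar{X}_k - \widehat{X}_k\|_p \le \|\bar{X}_k - \widetilde{X}_k\|_p + \|\widetilde{X}_k - \widehat{X}_k\|_p,$$
where the first term couples $\bar{X}_k = \mathcal{E}_{k-1}(\bar{X}_{k-1},\varepsilon_k)$ with $\widetilde{X}_k = \mathcal{E}_{k-1}(\widehat{X}_{k-1},\varepsilon_k)$, while the second is the $L^p$-error of quantizing $\widetilde{X}_k$ on the $L^2$-optimal grid $\Gamma_k$. The base case $k=0$ is trivial since $\widehat{X}_0=\bar{X}_0=x_0$.

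For the Lipschitz propagation I would write $\mathcal{E}_{k-1}(x,\varepsilon)-\mathcal{E}_{k-1}(y,\varepsilon) = (x-y) + \Delta(b_{k-1}(x)-b_{k-1}(y)) + \sqrt{\Delta}(\sigma_{k-1}(x)-\sigma_{k-1}(y))\varepsilon$ and expand $|a+b|^p$. For $p\ge 2$ the pointwise inequality $|a+b|^p \le |a|^p + p|a|^{p-2}\langle a,b\rangle + c_p^{(1)}(|a|^{p-2}+|b|^{p-2})|b|^2$ applies; conditioning on $\mathcal{F}_{t_{k-1}}$ the cross term vanishes by independence and centering of $\varepsilon_k$, and the Lipschitz constants of $b,\sigma$ together with $\E|\varepsilon_k|^p<\infty$ yield $\|\bar{X}_k-\widetilde{X}_k\|_p^p \le (1+\Delta\lambda_p)\|\bar{X}_{k-1}-\widehat{X}_{k-1}\|_p^p$ with $\lambda_p = p[b]_{\rm Lip} + c_p^{(1)} + c^{(3)}_{p,\Delta_{\max},\varepsilon_k}[\sigma]_{\rm Lip}^p$, and $1+\Delta\lambda_p \le e^{\Delta\lambda_p}$ gives the exponential form of $[\mathcal{E}_{k-1}]_{\rm Lip}$ claimed in the theorem. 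For $p\in(1,2)$ the same expansion fails by lack of convexity, so I would run the computation at the exponent $s=p+1>2$; the appearance of $s$ in place of $p$ in the statement reflects this detour, with the norm monotonicity $\|\cdot\|_p\le\|\cdot\|_s$ absorbing the gap afterwards.

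For the quantization term, the analysis splits. If $p\in(1,2]$, then $\|\widetilde{X}_k-\widehat{X}_k\|_p \le \|\widetilde{X}_k-\widehat{X}_k\|_2$ and Pierce's Lemma \ref{Zadoretpierce}(b) with $\eta=\delta$ (applied to the $L^2$-optimal grid $\Gamma_k$) delivers $\kappa_{d,2,\delta}\sigma_{2+\delta}(\widetilde{X}_k)N_k^{-1/d}$. If $p\in(2,2+d)$ the situation is the \emph{distortion mismatch} problem: the grid is optimal for the exponent $2$ but the error is measured at the larger exponent $p$. Here I would invoke the non-asymptotic mismatch estimate (Graf--Luschgy--Pag\`es, later refined by Pag\`es), whose proof uses a lower bound on the local density of $L^2$-optimal codebooks (encoded in $\varphi_2(\varepsilon)=(\tfrac{1}{9}-\varepsilon^2)\varepsilon^d$ and the hyper-ball volume $V_d$): it yields $\|\widetilde{X}_k-\widehat{X}_k\|_p \le \widetilde{K}_{d,2,2+\delta,p}\|\widetilde{X}_k\|_{2+\delta}N_k^{-1/d}$, with the constraint $p<2+d$ precisely ensuring convergence of the integral $\int_{\R^d}(1\vee\|x\|)^{-(d+2-p)(2+\delta)/p}dx$ appearing in $\widetilde{K}_{d,2,2+\delta,p}$. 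The needed moment bound $\sigma_{2+\delta}(\widetilde{X}_k)\le \|\widetilde{X}_k\|_{2+\delta} \le C_{2+\delta,b,\sigma,T}(k)^{1/(2+\delta)}$ is provided by Lemma \ref{Xtildequad}, via a Gronwall iteration on the $(2+\delta)$-moments of the recursive scheme.

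Combining the two steps produces the one-step recursion
$$\|\bar{X}_k-\widehat{X}_k\|_p \le [\mathcal{E}_{k-1}]_{\rm Lip}\|\bar{X}_{k-1}-\widehat{X}_{k-1}\|_p + (\widetilde{K}_{d,2,2+\delta,p}\vee \kappa_{d,2,\delta})\,C_{2+\delta,b,\sigma,T}(k)^{1/(2+\delta)} N_k^{-1/d},$$
which unrolls by induction (using $\widehat{X}_0=\bar{X}_0$) into the telescoping sum $\sum_{l=1}^k [\mathcal{E}_k]_{\rm Lip}^{k-l}\cdots$ stated in the theorem, after bounding $\prod_{j=l}^{k-1}[\mathcal{E}_j]_{\rm Lip}$ by $[\mathcal{E}_k]_{\rm Lip}^{k-l}$. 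The main obstacle is the distortion-mismatch bound for $p>2$: unlike Pierce's inequality, it does not follow from Zador and requires controlling how mass of $L^2$-optimal codebooks concentrates on compact sets, which is where both the constraint $p<2+d$ and the peculiar shape of $\widetilde{K}_{d,2,2+\delta,p}$ originate. A secondary delicate point is the $L^p$-Lipschitz computation when $p\in(1,2)$, which explains the asymmetric definition of $[\mathcal{E}_k]_{\rm Lip}$ in the two regimes.
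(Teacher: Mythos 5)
Your overall architecture coincides with the paper's proof: establish an $L^p$-Lipschitz property of the Euler operator $\mathcal{E}_k$, combine it with the triangle inequality and an induction to obtain $\|\bar X_k-\widehat X_k\|_p\le\sum_{l=1}^k[\mathcal{E}_k]_{\rm Lip}^{k-l}\,\|\widetilde X_l-\widehat X_l\|_p$, bound the quantization terms by Pierce's Lemma \ref{Zadoretpierce}$(b)$ (via $\|\cdot\|_p\le\|\cdot\|_2$) when $p\le 2$ and by the distortion mismatch Theorem \ref{distmismatch} when $p\in(2,2+d)$, and finally plug in the $(2+\delta)$-moment bound of Lemma \ref{Xtildequad}, which is where stationarity is used. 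All of this matches the paper.

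The gap is in your handling of the Lipschitz step for $p\in(1,2)$. You propose to ``run the computation at the exponent $s=p+1$'' and let the monotonicity $\|\cdot\|_p\le\|\cdot\|_s$ ``absorb the gap afterwards''. As written this does not close the recursion at the $L^p$ level: once $x,x'$ are replaced by the random variables $\bar X_{k-1},\widehat X_{k-1}$, the chain $\|\bar X_k-\widetilde X_k\|_p\le\|\bar X_k-\widetilde X_k\|_s\le C\,\|\bar X_{k-1}-\widehat X_{k-1}\|_s$ leaves an $L^s$-norm of the previous-step error on the right-hand side. Iterating in $L^s$ instead would force you to control $\|\widetilde X_l-\widehat X_l\|_s$ with $s=p+1>2$ for an $L^2$-optimal grid, i.e. to invoke the mismatch bound at target exponent $s$; its moment condition $2+\delta>\tfrac{sd}{d+2-s}$ fails for $\delta\in(0,1]$ already when $d=1$ and $p\in(\tfrac54,2)$, it contradicts your own (correct) claim that Pierce at level $2$ suffices for the quantization term when $p\in(1,2]$, and it would not produce the constant $e^{\Delta(\cdot)/p}$ of the statement. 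Two repairs are available. Either keep the estimate at the level of the $p$-th moment, as the paper does: write $|D|^p=|D|^s\,|D|^{p-s}$ with $D=\mathcal{E}_k(x,\varepsilon_{k+1})-\mathcal{E}_k(x',\varepsilon_{k+1})$ and use the order-$s$ pointwise expansion $(\ref{inegalitedulemme})$, so the centered cross term still vanishes under the expectation and the recursion stays in $L^p$. Or condition on $\mathcal{F}_{t_{k-1}}$ and apply conditional Jensen, $\E\big[|D|^p\,\big|\,\mathcal{F}_{t_{k-1}}\big]\le\big(\E\big[|D|^s\,\big|\,\mathcal{F}_{t_{k-1}}\big]\big)^{p/s}$, to convert the conditional $s$-moment contraction into an $L^p$ one before taking expectations (this even yields the slightly better factor $e^{\Delta(\cdot)/s}\le e^{\Delta(\cdot)/p}$). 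With either fix the rest of your plan goes through exactly as in the paper.
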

Before sharing the proof, we need to present some a priori useful results, mainly the distortion mismatch problem and two lemmas. We reconsider the notations where we replace the indices $t_k$ by $k$ to alleviate notations. \bigskip\\
\noindent {\sc  {$(L^r,L^s)$-problem or distortion mismatch problem}}\smallskip\\
Let $r,s \in (0,+\infty)$, the $(L^r,L^s)$-problem, also called distortion mismatch problem, consists in determining whether the optimal rate of $L^r$-optimal quantizers holds for $L^s$-quantizers for $s \neq r$, i.e. whether an $L^r$-optimal quantizer $\Gamma_N$ of size $N$ of a random vector $X$ has an $L^s$-optimal convergence rate for $s\neq r$. For $s <r$, it is clear that an $L^r$-optimal quantizer is $L^s$-rate optimal due to the monotony of $r \rightarrow \|.\|_r$. When $s$ becomes higher than $r$, we do not have such direct results. This problem was first introduced and treated in \cite{mismatch08,GraLu00} for radial density distributions on $\R^d$ and then generalized in \cite{PaSa18} for all random vectors satisfying a certain moment condition. In the following theorem, we sum up this result and give a universal non-asymptotic Pierce type optimality result (in the sense of $(\ref{Pierce})$).
\begin{thm}[Extended Pierce's Lemma]
\label{distmismatch}
$(a)$ Let $r>0$ and $X$ be an $\R^d$-valued random vector such that $\E |X|^{r'}<+\infty$ for some $r'>r$. Assume that its distribution $\P_X$ has a non-zero absolutely continuous component and let $(\Gamma_N)_{N \geq 1}$ be a sequence of $L^r$-optimal quantizers of $X$. Then, for every $s\!\in \Big(0, \frac{(d+r)r'}{d+ r'}\Big)$,
\begin{equation}
\label{piercemismatch} e_s(\widehat{X}^{\Gamma_N},X)\leq \widetilde{K}_{d,r,r',s}\, \sigma_{r'}(X)\, N^{-\frac{1}{d}}
\end{equation}
	where $\sigma_{r'}(X)=\inf_{a \in \R^d}\|X-a\|_{r'}$ is the $L^{r'}$-standard deviation of $X$ and $$ \widetilde{K}_{d,r,r',s}\leq \, 2^{\frac{sr'}{(r+d)^2-ds}} V_d^{-\frac{1}{r+d}}\kappa_{X,r}^{\frac{1}{r+d}}\min_{\varepsilon \in (0,\tfrac13)}\Psi_r(\varepsilon) \left(\int\big(1 \vee \|x\| \big)^{-\frac{(d+r-s)r'}{s}}dx \right)^{\frac{1}{r+d}}$$ with $\kappa_{X,r}$ a finite positive constant independent from $N$, $V_d$ the volume of the hyper-unit ball and
	$ \Psi_r(u)=(1+u)\left(\frac{1}{3^r}-u^r\right)^{-\frac{1}{d+r}}u^{-\frac{d}{d+r}}$.\medskip\\
	$(b)$ In particular if $X$ has finite polynomial moments at any order, then $(\ref{piercemismatch})$ is satisfied for every $s \in (r,d+r)$ and $r' > \frac{sd}{d+r-s}$.
\end{thm}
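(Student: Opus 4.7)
The plan is to bound $\mathbb{E}[d(X,\Gamma_N)^s]$ by decomposing space into an \emph{inner} ball $B(0,R)$, on which the maximal Voronoi radius of the $L^r$-optimal quantizer $\Gamma_N$ can be controlled at the optimal scale $N^{-1/d}$, and an \emph{outer} region on which the $L^{r'}$-moment of $X$ absorbs the tail. The cutoff $R$ together with an auxiliary ``collar'' parameter $\varepsilon\in(0,\tfrac13)$ (that tunes the geometric constants) will be optimized at the end.

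First I would establish a quantitative local radius bound: for any $a_i\in\Gamma_N$ whose Voronoi cell meets $B(0,R)$, $\mathrm{rad}(C_i)\le c(R,\varepsilon)N^{-1/d}$ with $c(R,\varepsilon)\sim \Psi_r(\varepsilon)\,\kappa_{X,r}^{1/(d+r)}V_d^{-1/(d+r)}(1+R)^{\beta}$ for a suitable power $\beta$. This rests on a Graf--Luschgy type \emph{micro--macro} argument: were some cell to have radius larger than this threshold, one could displace another centre from a region of larger density into that cell and strictly decrease the $L^r$-distortion, contradicting optimality. The absolute continuity assumption on $P_X$ provides a cube on which the density is bounded below, giving the density-dependent constant $\kappa_{X,r}$, and $\varepsilon$ shrinks this cube slightly, producing the factor $\Psi_r(\varepsilon)=(1+\varepsilon)(\tfrac{1}{3^r}-\varepsilon^r)^{-1/(d+r)}\varepsilon^{-d/(d+r)}$.

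Next I would split
\[
\mathbb{E}\bigl[d(X,\Gamma_N)^s\bigr]=\mathbb{E}\bigl[d(X,\Gamma_N)^s\mathbf{1}_{\|X\|\le R}\bigr]+\mathbb{E}\bigl[d(X,\Gamma_N)^s\mathbf{1}_{\|X\|>R}\bigr].
\]
The first term is controlled by $c(R,\varepsilon)^s\,N^{-s/d}$ via the radius bound. For the second, I would use $d(X,\Gamma_N)\le\|X-a^*\|$ for a fixed centre $a^*\in\Gamma_N$ close to the barycentre of $P_X$, Hölder with conjugate exponents $r'/s$ and $r'/(r'-s)$, and Markov's inequality $\mathbb{P}(\|X\|>R)\le R^{-r'}\sigma_{r'}(X)^{r'}$, yielding a tail bound of the form $\sigma_{r'}(X)^sR^{-(r'-s)s/r'}$.

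Balancing the two contributions selects $R$ on the scale of a power of $N$, and the resulting prefactor is obtained after summation over cells, which produces the volumetric integral $\int(1\vee\|x\|)^{-(d+r-s)r'/s}dx$. This integral is finite exactly when $(d+r-s)r'/s>d$, i.e.\ $s<(d+r)r'/(d+r')$, which is precisely the admissible range for $s$; after minimising over $\varepsilon\in(0,\tfrac13)$ and collecting the factors $V_d^{-1/(d+r)}$, $\kappa_{X,r}^{1/(d+r)}$ and $\Psi_r(\varepsilon)$, one recovers the explicit bound on $\widetilde{K}_{d,r,r',s}$. Part $(b)$ follows directly from $(a)$: when $X$ has polynomial moments of all orders, the admissibility condition rewrites as $r'>sd/(d+r-s)$, so for every $s\in(r,d+r)$ one can choose such an $r'$. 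The main obstacle is the quantitative micro--macro step, where $V_d^{-1/(d+r)}$, $\Psi_r(\varepsilon)$, $\kappa_{X,r}^{1/(d+r)}$ and the shrinkage by $\varepsilon$ must be tracked simultaneously in order to recover exactly the prefactor displayed in (\ref{piercemismatch}).
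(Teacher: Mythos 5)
Your plan hinges on a step that is not available under the hypotheses of the theorem: the claimed uniform radius bound $\mathrm{rad}(C_i)\le c(R,\varepsilon)N^{-1/d}$ for \emph{every} Voronoi cell of $\Gamma_N$ meeting $B(0,R)$. The assumption is only that $\P_X$ has a non-zero absolutely continuous component; nothing forces the density to be bounded away from $0$ on $B(0,R)$, nor the support of $\P_X$ to cover $B(0,R)$. The micro--macro (displacement) argument you invoke only produces a contradiction when there is enough local mass near the allegedly oversized cell to gain by inserting a new centre there; in a region of $B(0,R)$ where the density is tiny (or zero, e.g.\ a gap between two clusters of mass), adding a point yields an arbitrarily small decrease of the $L^r$-distortion, so no contradiction arises and cells meeting $B(0,R)$ can have radius of order $1$, not $N^{-1/d}$. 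The ``cube on which the density is bounded below'' that the absolute continuity gives you is a \emph{fixed} cube somewhere in $\R^d$, and it cannot be promoted to a density lower bound on all of $B(0,R)$. Consequently the estimate $\E\bigl[d(X,\Gamma_N)^s\mathbf{1}_{\|X\|\le R}\bigr]\le c(R,\varepsilon)^s N^{-s/d}$, which carries the whole inner part of your decomposition, is unjustified (and false in general as a sup-type bound); what can be controlled is only an \emph{integrated} quantity, weighted by $\P_X$, and your pointwise bound cannot capture that trade-off between large local distance and small local mass.

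For comparison, the paper never controls individual cells. It starts from the decrement inequality for optimal quantizers, $e_r(\Gamma_N,P)^r-e_r(\Gamma_{N+1},P)^r\ \gtrsim\ \int \nu\bigl(B(x,\tfrac{c}{c+1}d(x,\Gamma_N))\bigr)\,d(x,\Gamma_N)^r\,dP(x)$, applied with the auxiliary probability measure $\nu(dx)\propto (1\vee\|x-a_1\|)^{-\beta}\lambda_d(dx)$, $\beta=\frac{(d+r-s)r'}{s}$ (this is where the volumetric integral $\int(1\vee\|x\|)^{-\beta}dx$ and the admissibility condition $s<\frac{(d+r)r'}{d+r'}$ really come from), then applies a \emph{reverse} H\"older inequality with exponents $\frac{s}{d+r}\in(0,1)$ and $-\frac{s}{d+r-s}<0$ to make $e_s(\Gamma_N,P)^{d+r}$ appear against a negative power of the $L^{r'}$-moment, uses the known upper bound $e_r(\Gamma_N,P)^r-e_r(\Gamma_{N+1},P)^r\le \kappa_{X,r}N^{-\frac{r+d}{d}}$ valid for $L^r$-optimal sequences, and finally a scaling (equivariance) argument to convert the moment factor into $\sigma_{r'}(X)$. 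If you want to keep an inner/outer decomposition in the spirit of your proposal, the salvageable version is to use the same decrement inequality with $\nu$ uniform on a ball of radius comparable to $R$ to get the integrated bound $\int_{B(0,R)}d(x,\Gamma_N)^{r+d}dP\le C(R)N^{-\frac{r+d}{d}}$, then H\"older on the inner region and your Markov/H\"older tail estimate outside, optimizing over $R$; but as written, the pointwise radius claim is the missing (and irreparable, under these hypotheses) idea, and part (a) does not go through, so part (b), which you correctly reduce to (a), is also unproved.
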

The following lemma is a technical one used repeatedly in the proofs in this paper. Its proof will be postponed to the appendix.
 \begin{lem}\label{lemme1}
  Let $r\!\in [2, +\infty)$ and $h_0>0$. Let $Z\!\in L^r_{\R^q}(\mathbb{P})$ with $\E\, Z=0$ and let $a\!\in \R^d$, $A\!\in {\cal M}(d,q,\R)$. Then for every  $h\!\in (0, h_0)$,
 \begin{equation}\label{lem1}
 \E\, \big| a+\sqrt{h}AZ \big|^r\le |a|^r (1+c^{(1)}_rh ) + c^{(2)}_{r,h_0}\, h \|A\|^r \E\, |Z|^r
  \end{equation}
where $c^{(1)}_r = 2^{(r-3)_+} \frac {(r-1)(r-2)}{2}$, $c^{(2)}_{r,h_0} = 2^{(r-3)_+}(r-1) \big( 1+ \tfrac r2 h_0^{\frac r2-1} \big)$ and $\|A\| $ is the operator norm.
 \end{lem}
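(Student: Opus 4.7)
The plan is to apply the second-order Taylor formula with integral remainder to $\varphi(u) := |u|^r$, exploit $\E\, Z = 0$ to kill the linear term, bound the Hessian by Cauchy--Schwarz, and absorb the resulting cross term by Young's inequality.

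First I would write, for $b = \sqrt{h}AZ$,
\begin{equation*}
\varphi(a+b) = \varphi(a) + \nabla\varphi(a)\cdot b + \int_0^1 (1-t)\, D^2\varphi(a+tb)(b,b)\, dt.
\end{equation*}
Since $\nabla\varphi(a) = r|a|^{r-2}a$ is deterministic while $\E Z = 0$, taking expectations eliminates the linear term and yields
\begin{equation*}
\E|a+\sqrt{h}AZ|^r = |a|^r + h\int_0^1 (1-t)\, \E\!\left[D^2\varphi\!\left(a+t\sqrt{h}AZ\right)(AZ,AZ)\right]dt.
\end{equation*}

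A direct computation gives $D^2\varphi(x)(y,y) = r(r-2)|x|^{r-4}\langle x,y\rangle^2 + r|x|^{r-2}|y|^2$ (the potential singularity at $x=0$ for $r\in(2,4)$ is harmless since $|x|^{r-2} \to 0$), and Cauchy--Schwarz $\langle x,y\rangle^2 \leq |x|^2|y|^2$ produces the clean bound $D^2\varphi(x)(y,y) \leq r(r-1)|x|^{r-2}|y|^2$. Combined with the elementary subadditivity $(u+v)^{r-2} \leq 2^{(r-3)_+}(u^{r-2}+v^{r-2})$ applied with $u = |a|$, $v = t\sqrt{h}|AZ|$, the integrand is dominated by
\begin{equation*}
r(r-1)\,2^{(r-3)_+}\big(|a|^{r-2}|AZ|^2 + t^{r-2} h^{(r-2)/2}|AZ|^r\big).
\end{equation*}

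To conclude, I would treat the cross term $|a|^{r-2}|AZ|^2$ by Young's inequality with conjugate exponents $\bigl(\tfrac{r}{r-2},\tfrac{r}{2}\bigr)$ (the case $r=2$ being direct since $r(r-2)=0$), which gives $|a|^{r-2}|AZ|^2 \leq \tfrac{r-2}{r}|a|^r + \tfrac{2}{r}|AZ|^r$, and then bound $|AZ|^r \leq \|A\|^r|Z|^r$. Computing $\int_0^1(1-t)\,dt = \tfrac12$ and $\int_0^1(1-t)t^{r-2}\,dt = \tfrac{1}{r(r-1)}$, and finally replacing $h^{r/2-1}$ by $h_0^{r/2-1}$, one obtains
\begin{equation*}
\E|a+\sqrt{h}AZ|^r \leq |a|^r\big(1+c^{(1)}_r h\big) + 2^{(r-3)_+}(r-1)\big(1+\tfrac{1}{r-1}h_0^{r/2-1}\big)\,h\,\|A\|^r\,\E|Z|^r,
\end{equation*}
which is at least as strong as the announced inequality since $\tfrac{1}{r-1} \leq \tfrac{r}{2}$ for every $r \geq 2$. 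The only real difficulty is bookkeeping: one must verify that the factors $2^{(r-3)_+}$, the Young constants $(r-2)/r$ and $2/r$, and the two integrals $\tfrac12$ and $\tfrac{1}{r(r-1)}$ combine to reproduce exactly $c^{(1)}_r = 2^{(r-3)_+}(r-1)(r-2)/2$ on the $|a|^r$ side; no genuine analytic obstacle arises.
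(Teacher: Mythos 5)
Your proof is correct and follows essentially the same route as the paper's: a second-order Taylor expansion of $u\mapsto |u|^r$, $\E Z=0$ to kill the linear term, Cauchy--Schwarz on the Hessian giving the factor $r(r-1)|x|^{r-2}$, the subadditivity bound with $2^{(r-3)_+}$, and Young's inequality with exponents $\tfrac{r}{r-2}$ and $\tfrac r2$. The only deviation is that you use the integral form of the remainder instead of the Lagrange form, and the exact evaluation of $\int_0^1(1-t)t^{r-2}\,dt=\tfrac{1}{r(r-1)}$ indeed gives the slightly sharper constant $\tfrac{1}{r-1}$ in place of $\tfrac r2$ in $c^{(2)}_{r,h_0}$, which implies the stated bound since $\tfrac{1}{r-1}\le \tfrac r2$ for $r\ge 2$.
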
 
The following lemma is important for the proof of Theorem \ref{erreurXQR}.
\begin{lem}
	\label{Xtildequad} 
	Consider $(\bar X_k)_{0 \leq k \leq n}$ defined by $(\ref{RBSDE:SDEeuler})$ and $(\widehat{X}_k)_{0 \leq k \leq n}$ its recursive quantization sequence defined by $(\ref{RBSDE:quantifrecursive})$. Assume that, for every $k \in \{0,\ldots,n\}$, $\widehat X_k$ is a stationary quadratic optimal quantization of $\widetilde{X}_k$ of size $N_k$ in the sense of $(\ref{RBSDE:stationary})$, with $\widehat{X}_0=\bar{X}_0=x_0 \in \R^d$. For every $r\geq2$ and every $k \in \{1, \ldots, n\}$, 
	\begin{equation}
	\label{momentXtilde}
	\E|\widetilde{X}_k|^r \leq e^{t_k (C_1+C_2)} |x_0|^r+ \frac{C_3}{C_1+C_2} \left(e^{t_{k-1} (C_1+C_2)}-1  \right).
	\end{equation}
	where \\
	 $C_1=rL_{b,\sigma}+(r-1)2^{r-2}+c_r^{(1)},\quad$ 
	$C_2=2^{r-1}L_{b,\sigma}^r \E|Z|^r\Delta_{\max}^{r} c_{r,\Delta_{\max}}^{(2)}:=L_{b,\sigma}^r2^{r-1} \Delta_{\max}^r c^{(3)}_{r,\Delta_{\max},Z}$ 
	and $C_3= C_2+2^{r-2}L_{b,\sigma}^r(1+r\Delta_{\max}^{r-1})(1+c_r^{(1)}\Delta_{\max})$ with $c_r^{(1)}$ and $c^{(2)}_{r,\Delta_{\max}}$ defined in Lemma \ref{lemme1}.
\end{lem}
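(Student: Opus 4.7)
The plan is to derive an affine recursion $u_k\le (1+(C_1+C_2)\Delta)\,u_{k-1}+C_3\Delta$ on $u_k:=\E|\widetilde{X}_k|^r$, and then solve it by a discrete Gronwall argument to recover the bound $(\ref{momentXtilde})$. By definition,
$$\widetilde{X}_k=\widehat{X}_{k-1}+\Delta\,b_{k-1}(\widehat{X}_{k-1})+\sqrt{\Delta}\,\sigma_{k-1}(\widehat{X}_{k-1})\,\varepsilon_k,$$
where $\widehat{X}_{k-1}$ is $\mathcal{F}_{t_{k-1}}$-measurable while $\varepsilon_k$ is $\mathcal{N}(0,I_q)$-distributed and independent of $\mathcal{F}_{t_{k-1}}$. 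Conditioning on $\mathcal{F}_{t_{k-1}}$ and applying Lemma~\ref{lemme1} with $a=\widehat{X}_{k-1}+\Delta\,b_{k-1}(\widehat{X}_{k-1})$, $A=\sigma_{k-1}(\widehat{X}_{k-1})$, $h=\Delta\in(0,\Delta_{\max})$ and $Z=\varepsilon_k$ then gives
$$\E\bigl[|\widetilde{X}_k|^r\,\big|\,\mathcal{F}_{t_{k-1}}\bigr]\le \bigl|\widehat{X}_{k-1}+\Delta\,b_{k-1}(\widehat{X}_{k-1})\bigr|^r(1+c^{(1)}_r\Delta)+c^{(2)}_{r,\Delta_{\max}}\,\Delta\,\|\sigma_{k-1}(\widehat{X}_{k-1})\|^r\,\E|\varepsilon_k|^r.$$

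Next, I would use the linear growth $\|b(\cdot,x)\|\vee\|\sigma(\cdot,x)\|\le L_{b,\sigma}(1+\|x\|)$, the triangle inequality and the elementary estimate $(1+u)^r\le 2^{r-1}(1+u^r)$ to deduce pointwise bounds of the form
$$|a|^r\le \bigl(1+(rL_{b,\sigma}+(r-1)2^{r-2})\Delta\bigr)|\widehat{X}_{k-1}|^r + 2^{r-2}L_{b,\sigma}^r(1+r\Delta_{\max}^{r-1})\,\Delta,\qquad \|\sigma_{k-1}(\widehat{X}_{k-1})\|^r\le 2^{r-1}L_{b,\sigma}^r(1+|\widehat{X}_{k-1}|^r),$$
absorbing higher powers $\Delta^{j}$ ($j\ge 2$) into $\Delta\cdot\Delta_{\max}^{j-1}$. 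Plugging these into the conditional estimate above and taking expectation yields
$$\E|\widetilde{X}_k|^r\le \bigl(1+(C_1+C_2)\Delta\bigr)\,\E|\widehat{X}_{k-1}|^r+C_3\,\Delta.$$

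At this point the key step is to exploit the \emph{stationarity} of the quadratic optimal quantizer: $\E(\widetilde{X}_{k-1}\mid\widehat{X}_{k-1})=\widehat{X}_{k-1}$, which by conditional Jensen gives $\E|\widehat{X}_{k-1}|^r\le \E|\widetilde{X}_{k-1}|^r$ for every $r\ge 1$. Substituting this into the previous inequality produces the announced affine recursion on $u_k=\E|\widetilde{X}_k|^r$. Iterating from $k$ down to $1$ (with the initial step $\widehat{X}_0=x_0$ deterministic, so that $u_1\le e^{\Delta(C_1+C_2)}|x_0|^r$ directly from the same computation without the $C_3\Delta$ term) gives a geometric sum equal to $\tfrac{C_3}{C_1+C_2}\bigl((1+(C_1+C_2)\Delta)^{k-1}-1\bigr)$, and the estimate $(1+(C_1+C_2)\Delta)^{j}\le e^{j(C_1+C_2)\Delta}=e^{t_j(C_1+C_2)}$ delivers $(\ref{momentXtilde})$.

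The main obstacle is the bookkeeping of constants in the second paragraph: the additive contributions to the coefficient of $|\widehat{X}_{k-1}|^r$ must be arranged to yield exactly $C_1=rL_{b,\sigma}+(r-1)2^{r-2}+c^{(1)}_r$, while $\Delta^j$ cross-terms for $j\ge 2$ must be reshaped by factoring out one $\Delta$ and bounding the remaining $\Delta^{j-1}$ by $\Delta_{\max}^{j-1}$, which is precisely how $\Delta_{\max}$ enters $C_2$ and the multiplicative factor $(1+r\Delta_{\max}^{r-1})(1+c^{(1)}_r\Delta_{\max})$ in $C_3$. Once this arithmetic is checked, the stationarity argument and the discrete Gronwall step are routine.
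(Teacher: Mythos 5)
Your proposal follows essentially the same route as the paper's proof: apply Lemma \ref{lemme1} to the Euler operator with $a=\widehat X_{k-1}+\Delta b_{k-1}(\widehat X_{k-1})$ and $A=\sigma_{k-1}(\widehat X_{k-1})$, use the linear growth of $b,\sigma$ to obtain an affine recursion for $\E|\widetilde X_k|^r$, replace $\E|\widehat X_{k-1}|^r$ by $\E|\widetilde X_{k-1}|^r$ via stationarity and conditional Jensen, and close with a discrete Gronwall/geometric-sum argument. The only point to watch is the bookkeeping you already flag: the coefficient $1+O(\Delta)$ in front of $|\widehat X_{k-1}|^r$ cannot come from the crude bound $(1+u)^r\le 2^{r-1}(1+u^r)$ (which would give a $\Delta$-independent factor $2^{r-1}$ and blow up under iteration) but requires the Young-type splitting with the $\Delta$-dependent weight $\varepsilon=\Delta^{1/r}$ and the folding of products into $e^{C_1\Delta}$ via $1+x\le e^x$, exactly as in the paper.
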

\begin{proof}
	The starting point is to use inequality $(\ref{lem1})$ with $a=x+\Delta b(t,x)$ and $A=\sigma(t,x)$. On the one hand, we notice that
	$$|a|  \leq |x|+\Delta L_{b,\sigma}(1+|x|) \leq |x|(1+\Delta L_{b,\sigma}) +\Delta L_{b,\sigma}.$$
	Then, using the fact that, for every $\varepsilon >0$,
	\begin{align}
	\label{a+bexpor}
	(\alpha+\beta)^r & \leq \alpha^r + r \beta (\alpha+\beta)^{r-1} \nonumber \\ 
	& \leq \alpha^r +r 2^{r-2} \left( (\varepsilon \alpha)^{r-1} \frac{\beta}{\varepsilon^{r-1}}+\beta^r \right)  \nonumber \\ 
	& \leq \alpha^r +r 2^{r-2} \left(  \beta^r + \frac{\varepsilon^r \alpha^r (r-1)}{r} +\frac{\beta^r}{r \varepsilon^{r(r-1)}}\right) \quad (\mbox{Young's inequality with } \tfrac{r}{r-1} \mbox{ and } r) \nonumber \\ 
	& \leq \alpha^r \Big(1+(r-1) 2^{r-2} \varepsilon^r \Big) +2^{r-2} \beta^r \Big( r +\frac{1}{\varepsilon^{r(r-1)}}\Big),
	\end{align}
	one has, by considering $\alpha=|x|(1+\Delta L_{b,\sigma})
	$ and $\beta=\Delta L_{b,\sigma}$, that
	$$|a|^r \leq |x|^r (1+\Delta L_{b,\sigma})^r \left(1+(r-1) 2^{r-2} \varepsilon^r \right) +2^{r-2} \Delta^r L_{b,\sigma}^r \left( r +\frac{1}{\varepsilon^{r(r-1)}}\right).$$
	On the other hand,  
	\begin{equation*}
	\|A\|  \leq \Delta L_{b,\sigma}(1+|x|) \quad \mbox{ so that } \quad
	\|A\|^r  \leq 2^{r-1} \Delta^r L_{b,\sigma}^r (1+|x|^r).
	\end{equation*}
	Consequently, Lemma $\ref{lemme1}$ yields
	\begin{align*}
	\E|a+A \sqrt{\Delta}Z|^r \leq & |x|^r(1+\Delta L_{b,\sigma})^r\left(1+(r-1) 2^{r-2} \varepsilon^r \right)\left(1+c_r^{(1)}\Delta\right) +L_{b,\sigma}^r 2^{r-1} \E|Z|^r \Delta^{r+1}  c_{r,\Delta_{\max}}^{(2)}|x|^r\\
	&+\left(1+c_r^{(1)}\Delta\right)2^{r-2}L_{b,\sigma}^r\Delta^r \left( r +\frac{1}{\varepsilon^{r(r-1)}}\right) +L_{b,\sigma}^r 2^{r-1} \E|Z|^r \Delta^{r+1}  c_{r,\Delta_{\max}}^{(2)} .
	\end{align*}
	
	At this stage, we are interested in considering a particular value of $\varepsilon$ to avoid any explosion at infinity in the rest of the proof. The best choice (up to a multiplicative constant) is $$\varepsilon =\Delta^{\frac{1}{r}}.$$
	Now, we recall that $\Delta \in [0,\Delta_{\max}), \, \Delta_{\max}>0$ and denote 
\begin{align*}
	C_1:=&C_1(r)=rL_{b,\sigma}+(r-1)2^{r-2}+c_r^{(1)}\\
	C_2:=&C_2(r,L_{b,\sigma},Z,\Delta_{\max})=2^{r-1}L_{b,\sigma}^r \E|Z|^r\Delta_{\max}^{r} c_{r,\Delta_{\max}}^{(2)}:=L_{b,\sigma}^r2^{r-1} \Delta_{\max}^r c^{(3)}_{r,\Delta_{\max},Z}\\
	C_3:=&C_3(r,Z,L_{b,\sigma},\Delta_{\max})= C_2+2^{r-2}L_{b,\sigma}^r(1+r\Delta_{\max}^{r-1})(1+c_r^{(1)}\Delta_{\max})
	\end{align*}
	Having $1+x \leq e^x$ yields
	\begin{align*}
	\E|a+A \sqrt{\Delta}Z|^r \leq  |x|^r e^{C_1\Delta}+\Delta \big( C_2|x|^r+C_3 \big)
	\leq   |x|^r e^{C_1\Delta} \big(1+\Delta C_2 e^{-\Delta C_1} \big) +\Delta C_3
	\leq  e^{\Delta (C_1+C_2)} |x|^r + \Delta C_3.
	\end{align*}
	Thus, since $\E|\widetilde{X}_k|^r =\E|\mathcal{E}_{k-1}(\widehat{X}_{k-1},\ve_k)|^r$, one can write
	$$\E|\widetilde{X}_k|^r \leq   e^{\Delta (C_1+C_2)} \E|\widehat{X}_{k-1}|^r + \Delta C_3. $$
	Using the fact that $\widehat{X}_{k-1}$ is a stationary quadratic optimal quantization of $\widetilde{X}_{k-1}$ and Jensen inequality yield
	$$\E|\widehat{X}_{k-1}|^r = \E|\E(\widetilde{X}_{k-1}|\widehat{X}_{k-1})|^r \leq  \E\big[\E\big(|\widetilde{X}_{k-1}|^r|\widehat{X}_{k-1}\big)\big] \leq \E|\widetilde{X}_{k-1}|^r.$$
	Therefore,
	$$\E|\widetilde{X}_k|^r \leq   e^{\Delta (C_1+C_2)} \E|\widetilde{X}_{k-1}|^r + \Delta C_3. $$
	Finally, it follows by induction that
	\begin{align*}
	\E|\widetilde{X}_k|^r  \leq & e^{k\Delta (C_1+C_2)} \E|\widetilde{X}_0|^r +\Delta C_3 \sum_{j=0}^{k-1}e^{j\Delta (C_1+C_2)}\\
	\leq &  e^{k\Delta (C_1+C_2)} |x_0|^r +\Delta C_3 \frac{e^{(k-1)\Delta (C_1+C_2)}-1}{e^{\Delta (C_1+C_2)}-1}\\
	\leq &  e^{k\Delta (C_1+C_2)} |x_0|^r+ \frac{C_3}{C_1+C_2} \left(e^{(k-1)\Delta (C_1+C_2)}-1  \right).
	\end{align*}
	The result is obtained by noting that $k\Delta=k\frac{T}{n}=t_k$.
	\hfill $\square$
\end{proof}
\bigskip
\begin{refprooferreurXQR}
The first step of the proof is to show that the function $\mathcal{E}_k(.,\ve_{k+1})$ is $L^p$-lipschitz continuous with Lipschitz coefficient $[\mathcal{E}_k]_{\rm Lip}$ for every $k \in \{0,\ldots,n-1\}$. We consider two cases depending on the values of $p$. \\
$\bullet$ If $p \in [2,2+d)$: For every $x,x' \in \R^d$,
		$$\E\big|\mathcal{E}_k(x,\ve_{k+1})-\mathcal{E}_k(x',\ve_{k+1})\big|^p = \E\big|x-x' +\Delta\big(b_k(x)-b_k(x')\big)+\sqrt{\Delta}\ve_{k+1}\big(\sigma_k(x)-\sigma_k(x')\big)\big|^p.$$
		Since $p \geq 2$, one applies Lemma \ref{lemme1} with $a=x-x' +\Delta\big(b_k(x)-b_k(x')\big)$ and $A=\sigma_k(x)-\sigma_k(x')$. We have
		$$
		|a|^p \leq \big(|x-x'|+\Delta [b]_{\rm Lip}|x-x'| \big)^p \leq |x-x'|^p\big(1+\Delta [b]_{\rm Lip}\big)^p \leq |x-x'|^p \,e^{p\Delta[b]_{\rm Lip}}
		$$
		and
		$$\|A\|^p \leq [\sigma]_{\rm Lip}^p |x-x'|^p.$$
		At this stage, reusing the constants $c_p^{(1)}=2^{(p-3)_+} \frac {(p-1)(p-2)}{2}$ and $c^{(3)}_{p,\Delta_{\max},\ve_{k+1}}=2^{(p-3)_+}(p-1)\E |\ve_{k+1}|^p \big( 1+ \tfrac p2 \Delta_{\max}^{\frac p2-1} \big)$ defined in Lemmas $\ref{lemme1}$ and $\ref{Xtildequad}$ yields
		\begin{align*}
		\E|\mathcal{E}_k(x,\ve_{k+1})-\mathcal{E}_k(x',\ve_{k+1})|^p & \leq \left(e^{\Delta (p[b]_{\rm Lip}+c_p^{(1)})} + \Delta [\sigma]_{\rm Lip}^p c^{(3)}_{p,\Delta_{\max},\ve_{k+1}} \right)|x-x'|^p\\
		& \leq |x-x'|^pe^{\Delta (p[b]_{\rm Lip}+c_p^{(1)})} \Big(1+\Delta [\sigma]_{\rm Lip}^p c^{(3)}_{p,\Delta_{\max},\ve_{k+1}}e^{-\Delta (p[b]_{\rm Lip}+c_p^{(1)})}\Big)\\
		& \leq |x-x'|^pe^{\Delta (p[b]_{\rm Lip}+c_p^{(1)})} \Big(1+\Delta [\sigma]_{\rm Lip}^p c^{(3)}_{p,\Delta_{\max},\ve_{k+1}}\Big)\\
		& \leq |x-x'|^pe^{\Delta \big(p[b]_{\rm Lip}+c_p^{(1)}+[\sigma]_{\rm Lip}^p c^{(3)}_{p,\Delta_{\max},\ve_{k+1}}\big)} .
		\end{align*}
		Consequently, $\mathcal{E}_k$ is $L^p$-lipschitz continuous with $[\mathcal{E}_k]_{\rm Lip}=e^{\Delta \big(p[b]_{\rm Lip}+c_p^{(1)}+[\sigma]_{\rm Lip}^p c^{(3)}_{p,\Delta_{\max},\ve_{k+1}}\big)/p}$, for every $k \in \{1,\ldots,n\}$ and $p\in [2,2+d)$.\medskip \\
		\noindent $\bullet$ If $1 <p<2$: Consider $s=p+1>2$ so that $p-s<0$. One has
	$$\E|\mathcal{E}_k(x,\ve_{k+1})-\mathcal{E}_k(x',\ve_{k+1})|^p=\E \left[|\mathcal{E}_k(x,\ve_{k+1})-\mathcal{E}_k(x',\ve_{k+1})|^s\, |\mathcal{E}_k(x,\ve_{k+1})-\mathcal{E}_k(x',\ve_{k+1})|^{p-s} \right].$$
On the one hand, 
\begin{align*}
|\mathcal{E}_k(x,\ve_{k+1})-\mathcal{E}_k(x',\ve_{k+1})|^{p-s} & \leq |x-x'|^{p-s}\left(1+\Delta [b]_{\rm Lip}+\sqrt{\Delta}\frac{|\sigma(x)-\sigma(x')|}{|x-x'|}|\ve_{k+1}| \right)^{p-s}\\
& \leq |x-x'|^{p-s}e^{(p-s)\left(1+\Delta [b]_{\rm Lip}+\sqrt{\Delta}\frac{|\sigma(x)-\sigma(x')|}{|x-x'|}|\ve_{k+1}|\right) } \quad (\mbox{since } 1+x \leq e^x)\\
& \leq |x-x'|^{p-s} \qquad (\mbox{ since } p-s<0).
\end{align*}
On the other hand, one uses inequality $(\ref{inegalitedulemme})$ from the proof of Lemma \ref{lemme1} (see Appendix) and denotes $a=x-x'+\Delta [b]_{\rm Lip}(x-x')$ and $AZ=(\sigma(x)-\sigma(x'))\ve_{k+1}$, to obtain
\begin{align*}
|\mathcal{E}_k(x,\ve_{k+1})-\mathcal{E}_k(x',\ve_{k+1})|^{s}  \leq &|x-x'+\Delta [b]_{\rm Lip}(x-x')+\sqrt{\Delta}(\sigma(x)-\sigma(x'))\ve_{k+1}|^{s}\\
\leq & |a|^s (1+\Delta c_s^{(1)})+ s\left(|a|^{s-1} \frac{a}{|a|} | A \sqrt{\Delta}Z \right)+ \Delta c^{(2)}_{s,\Delta_{\max}} |AZ|^s.
\end{align*}
At this stage, one notices that $|a|^s\leq |x-x'|^s(1+\Delta [b]_{\rm Lip})^s$ and that
$|AZ|^s  = [\sigma]_{\rm Lip}^s |x-x'|^s |\ve_{k+1}|^s$. Then, using $1+x \leq e^x$, one deduces
\begin{align*}
|\mathcal{E}_k(x,\ve_{k+1})-\mathcal{E}_k(x',\ve_{k+1})|^{s} \leq &|x-x'|^s (1+\Delta c_s^{(1)})(1+\Delta [b]_{\rm Lip})^s + s\left(|a|^{s-1} \frac{a}{|a|} | A \sqrt{\Delta}Z \right)\\
&+\Delta c^{(2)}_{s,\Delta_{\max}} [\sigma]_{\rm Lip}^s |x-x'|^s |\ve_{k+1}|^s \\
 \leq & |x-x'|^s e^{\Delta (c_s^{(1)}+s[b]_{\rm Lip})} + s\left(|a|^{s-1} \frac{a}{|a|} | A \sqrt{\Delta}Z \right)+\Delta c^{(2)}_{s,\Delta_{\max}} [\sigma]_{\rm Lip}^s |x-x'|^s |\ve_{k+1}|^s.
\end{align*}
Consequently, applying the expectation and keeping in mind that $\E|AZ|=0$, we obtain
\begin{align*}
\E|\mathcal{E}_k(x,\ve_{k+1})-\mathcal{E}_k(x',\ve_{k+1})|^p & \leq  \;e^{\Delta (c_s^{(1)}+s[b]_{\rm Lip})} |x-x'|^p+\Delta c^{(2)}_{s,\Delta_{\max}}  [\sigma]_{\rm Lip}^s |x-x'|^p \E|\ve_{k+1}|^s\\
&\leq  |x-x'|^p e^{\Delta (c_s^{(1)}+s[b]_{\rm Lip})} \Big(1+\Delta c^{(2)}_{s,\Delta_{\max}}  [\sigma]_{\rm Lip}^s \E|\ve_{k+1}|^s e^{-\Delta (c_s^{(1)}+s[b]_{\rm Lip})}\Big)\\&\leq  |x-x'|^p e^{\Delta (c_s^{(1)}+s[b]_{\rm Lip})} \Big(1+\Delta c^{(2)}_{s,\Delta_{\max}}  [\sigma]_{\rm Lip}^s \E|\ve_{k+1}|^s\Big)\\
& \leq  |x-x'|^p e^{\Delta \big(c_s^{(1)}+s[b]_{\rm Lip}+c^{(2)}_{s,\Delta_{\max}}  [\sigma]_{\rm Lip}^s \E|\ve_{k+1}|^s\big)}.
\end{align*}
		Consequently, $\mathcal{E}_k$ is $L^p$-Lipschitz continuous, for every $k \in \{1,\ldots,n\}$ and $p \in (1,2)$, with $[\mathcal{E}_k]_{\rm Lip}=e^{\Delta \big(c_s^{(1)}+s[b]_{\rm Lip}+c^{(2)}_{s,\Delta_{\max}}  [\sigma]_{\rm Lip}^s \E|\ve_{k+1}|^s\big)/p}$.\\
		
	\noindent For the second step, we first note that
	\begin{align*}
	\|\bar{X}_{k+1}-\widetilde{X}_{k+1}\|_p & = \|\mathcal{E}_k(\bar{X}_k,\ve_{k+1})-\mathcal{E}_k(\widehat{X}_k,\ve_{k+1})\|_p  \\
	&\leq [\mathcal{E}_k]_{\rm Lip} \|\bar{X}_k-\widehat{X}_k\|_p \\
	&\leq [\mathcal{E}_k]_{\rm Lip} \|\bar{X}_k-\widetilde{X}_k\|_p+ [\mathcal{E}_k]_{\rm Lip} \|\widetilde{X}_k-\widehat{X}_k\|_p.
	\end{align*} 
	Then, we show by induction, since $\widehat{X}_0=\widetilde{X}_0$, that
	$$\|\bar{X}_k-\widetilde{X}_k\|_p \leq \sum_{l=1}^{k-1} [\mathcal{E}_k]_{\rm Lip}^{k-l} \|\widetilde{X}_l-\widehat{X}_l\|_p.$$
	Consequently, 
	\begin{equation*}
	\|\bar{X}_k-\widehat{X}_k\|_p  \leq \|\bar{X}_k-\widetilde{X}_k\|_p+\|\widetilde{X}_k-\widehat{X}_k\|_p \leq  \sum_{l=1}^{k} [\mathcal{E}_k]_{\rm Lip}^{k-l} \|\widetilde{X}_l-\widehat{X}_l\|_p.
	\end{equation*}
	Now relying on the fact that $\widehat{X}_l$ is an $L^2$-optimal quantizer of $\widetilde{X}_l$ for every $l \in \{1, \ldots,k\}$, we distinguish two cases: one the one hand, if $p \in (1,2)$, we use the monotony of $p \mapsto \|\cdot\|_p$ and Pierce's Lemma $(\ref{Pierce})$ to conclude that, for every $l \in \{1,\ldots,k\}$, 
	$$\|\widetilde{X}_l-\widehat{X}_l\|_p \leq \|\widetilde{X}_l-\widehat{X}_l\|_2 \leq \kappa_{d,2,\delta} \|\widetilde{X}_l\|_{2+\delta} N_l^{-\frac{1}{d}},$$
	for some $\delta>0,$ and, on the other hand, if $p \in [2,2+d)$, we note that $\widetilde X_l=F_l(\widehat X_{l-1}, \ve_{l})$ has finite polynomial moments at any order since the innovations $(\ve_k)_{0 \le k \le n}$ in the Euler operators are with Gaussian distribution and hence have finite polynomial moments at any order, so one uses section $(b)$ of the distortion mismatch Theorem $\ref{distmismatch}$ to conclude that the quantization $\widehat{X}_l$ of $\widetilde X_l$ is $L^p$-rate optimal for every $p\in [2,2+d)$, in other words, we consider $\delta>0$ such that $r'=2+\delta > \frac{pd}{d+2-p}>2$ so that 
	$$\|\widetilde{X}_l-\widehat{X}_l\|_p \leq  \widetilde K_{d,2,2+\delta,p} \|\widetilde{X}_l\|_{2+\delta} N_l^{-\frac{1}{d}}.$$
	Hence, for every $p \in (1,2+d)$, 	
	$$\|\bar{X}_k-\widehat{X}_k\|_p \leq \big(\widetilde{K}_{d,2,2+\delta,p} \vee \kappa_{d,2,\delta}\big)  \sum_{l=1}^{k} [\mathcal{E}_k]_{\rm Lip}^{k-l} \|\widetilde{X}_l\|_{2+\delta} N_l^{-\frac{1}{d}}.$$
	The result is obtained by plugging $(\ref{momentXtilde})$ for $r=2+\delta>2$ in this last inequality.
	\hfill $\square$\\
\end{refprooferreurXQR}
\begin{rmq}
In higher dimensions, an approach to obtain the quantization grid of a multidimensional random variable is by taking the tensor product of one-dimensional quantization grids, that is the independent marginals of the distribution. The product quantization grid hence obtained by independent optimal one-dimensional quantizers is stationary and so this problem is solved in the multidimensional case. However, in most cases, the components of the diffusion $X_t$ are not independent so this is not a very useful technique in practice.
\end{rmq}
\begin{rmq}
We assume that $\widehat{X}_k$ is an $L^p$-optimal quantizer of $\widetilde{X}_k$ for every $k \in \{1,\ldots,n\}$. What differs from $L^2$-optimal quantizers is that $L^p$-optimal quantizers are not usually stationary, a property that was very useful in the quadratic case. The beginning of the study is exactly similar to the quadratic framework until we obtain 
$$\E|\widetilde{X}_k|^r \leq e^{\Delta (C_1+C_2)} \E|\widehat{X}_{k-1}|^r + \Delta C_3.$$
At this stage, one cannot use the stationarity property. Instead, applying inequality $(\ref{a+bexpor})$ yields 
\begin{align*}
\E|\widehat{X}_{k-1}|^r &  \leq \E\big(|\widehat{X}_{k-1}-\widetilde{X}_{k-1}|+|\widetilde{X}_{k-1}|\big)^r 
 \leq \E|\widehat{X}_{k-1}-\widetilde{X}_{k-1}|^r e^{C_4 \Delta } +\E|\widetilde{X}_{k-1}|^r 2^{r-2} \Big(r+\frac{1}{\Delta^{r-1}}\Big)
\end{align*}
where we took $\varepsilon=\Delta^{\frac1r}$ and denoted $C_4=(r-1)2^{r-2}$. Then, 
$$\E|\widetilde{X}_k|^r \leq \E|\widehat{X}_{k-1}-\widetilde{X}_{k-1}|^r e^{(C_1+C_2+C_4) \Delta } +e^{(C_1+C_2) \Delta}\E|\widetilde{X}_{k-1}|^r 2^{r-2} \Big(r+\frac{1}{\Delta^{r-1}}\Big)+\Delta C_3 $$
and an induction yields 
\begin{align*}
\E|\widetilde{X}_k|^r \leq & e^{k\Delta (C_1+C_2)} \left[2^{r-2} \Big(r+\frac{1}{\Delta^{r-1}}\Big) \right]^k \E|X_0|^r \\
&+ \sum_{i=0}^k e^{(k-i)\Delta (C_1+C_2)}\left(\E|\widehat{X}_{k-1}-\widetilde{X}_{k-1}|^r e^{(C_1+C_2+C_4) \Delta }+\Delta C_3 \right) \left[2^{r-2} \Big(r+\frac{1}{\Delta^{r-1}}\Big) \right]^k
\end{align*}
which clearly diverges as $n$ goes to infinity. The fact that it seems impossible to get rid of the factor $\frac{1}{\Delta}$, without the stationarity property, leads to conclude that we do not obtain satisfactory $L^p$-error bounds with a non-stationary $L^p$-optimal quantizer $\widehat{X}_k$ of $\widetilde{X}_k$. However, this is not really problematic since this is a very rare situation in practice because, as mentioned previously,  one usually uses quadratic optimal quantizers for numerical purposes. 
\end{rmq}
\subsection{Hybrid recursive quantization}
\label{hybridQR}
When the dimension becomes greater than $1$, computing the distribution (grids and transition matrices) of $(\widehat{X}_k)_{0 \leq k \leq n}$ via the recursive formulas $(\ref{RBSDE:quantifrecursive})$ cannot be achieved via closed formulas and deterministic optimization procedures. 
Multi-dimensional extensions can be found in \cite{FiPaSa19} based on product quantization but this approach becomes computationally demanding when the dimension grows, an alternative being to implement a massive ''embedded'' Monte Carlo simulation.  We propose here a third approach based on the quantization of the white noise (here a Gaussian one). This quantization can be part of a pre-processing  and kept off line. In the case of a Gaussian noise, highly accurate quantization grids of ${\cal N}(0,I_q)$ distribution for dimensions $d =1$ up to $10$ and regularly sampled sizes from $N=1$ to $1\, 000$ can be downloaded from the quantization website 
\href{http://www.quantize.maths-fi.com}{www.quantize.maths-fi.com} (for non-commercial purposes). In other words, we consider,  instead of $(\ref{RBSDE:quantifrecursive})$, the following recursive scheme
\begin{equation}
\label{RBSDE:quantifrecursivehybride}
\left\{
\begin{array}{rl}
\widetilde{X}_{k} &=\mathcal{E}_{k-1}(\widehat{X}_{k-1},\widehat{\ve}_{k}),  \\
\smallskip
\widehat{X}_{k} & =\mbox{Proj}_{\Gamma_k}(\widetilde{X}_k), \qquad \quad \forall k= 1, \ldots, n.
\end{array}
\right .
\end{equation}
where $(\widehat{\ve}_k)_{k}$ is now a sequence of optimal quantizers of the Normal distribution $\mathcal{N}(0,I_q)$, which are already computed and kept off line. The main advantage of this approach is that using quantization grids of small size $N^{\ve}_k$ approaching the Gaussian random vectors $\ve_k$ gives the same precision as a Monte Carlo simulation of much larger size, always having in mind that the optimal quantizers can be computed offline and called when needed. This is a great gain in cost. \\
In the following, we establish $L^p$-error bounds of this hybrid recursive quantization scheme, for $p \in (1,2+d)$, in terms of the error between $\widehat{X}_k$ and $\widetilde{X}_k$ and the quantization error between $\ve_k$ and $\widehat{\ve}_k$ simultaneously. We recall that $\Delta \in [0,\Delta_{\max}), \, \Delta_{\max}>0$. 
\begin{thm}
	\label{hybridrecursive}
		Let $p \in (1,2+d)$ and $\delta>0$. Consider $(\bar X_k)_{0 \leq k \leq n}$ defined by $(\ref{RBSDE:SDEeuler})$ and $(\widehat{X}_k)_{0 \leq k \leq n}$ its hybrid recursive quantization sequence defined by $(\ref{RBSDE:quantifrecursivehybride})$. Assume that, for every $k \in \{0,\ldots,n\}$, $\widehat X_k$ is a stationary $L^2$-optimal quantization of $\widetilde{X}_k$ of size $N_k^X$ in the sense of $(\ref{RBSDE:stationary})$ with $\widehat{X}_0=\bar{X}_0=x_0 \in \R^d$ and $(\widehat \ve_k)_{0 \leq k \leq n}$ an $L^p$-optimal quantization sequence of the Gaussian distributed sequence $(\ve_k)_{0 \leq k \leq n}$ of size $N_k^{\ve}$. For every $k\in \{1, \ldots, n\}$, 
	$$\|\bar{X}_k-\widehat{X}_k\|_p  \leq \big(\widetilde K_{d,2,2+\delta,p} \vee \kappa_{d,2,\delta} \big)\sum_{l=1}^{k} [F^x_k]_{\rm Lip}^{k-l} C_{2+\delta, b,\sigma,T}^{\frac{1}{2+\eta}} (N_l^X)^{-\frac1d}+\sum_{l=1}^{k-1} \kappa_{d,p,\delta}[F^{\ve}_k]_{\rm Lip}^{k-l} \|\ve_l\|_p (N_l^{\ve})^{-\frac1d}$$
	where $\kappa_{d,2,\delta}$ is the constant given by Pierce's Lemma, $\widetilde K_{d,2,2+\delta,p}$ is given in Theorem \ref{distmismatch},
	$$C_{2+\delta, b,\sigma,T}= e^{t_k (C_1+C_2)} |x_0|^{2+\delta}+ \frac{C_3}{C_1+C_2} \left(e^{t_{k-1} (C_1+C_2)}-1  \right)$$
	with $C_1,C_2$ and $C_3$ are defined in Lemma $\ref{Xtildequad}$,
	$$[F^x_k]_{\rm Lip}=
	\left\{
	\begin{array}{ll}
	e^{\frac{\Delta}{p} \Big(c_p^{(1)}+L_{b,\sigma}\big(p+2^{p-1} c^{(2)}_{p,\Delta_{\max}} \big)\Big)} & \quad \mbox{if } p \in[2,2+d)\\
	e^{\frac{\Delta}{p}\Big(c_s^{(1)}+sL_{b,\sigma}+2^{s-1}L_{b,\sigma}^s c^{(2)}_{s,\Delta_{\max}} \big(\E|\ve|^s+\frac{p-s}{p}\big) \Big) } & \quad \mbox{if } p \in(1,2)
	\end{array}
	\right.$$ 
	and $$[F^{\ve}_k]_{\rm Lip}=
	\left\{
	\begin{array}{ll}
	 \Delta^{\frac1p} \Big(2^{p-1}c^{(2)}_{p,\Delta_{\max}}  L_{b,\sigma}\Big)^{\frac1p}& \quad \mbox{if } p \in[2,2+d)\\
	 \Delta^{\frac1p}\Big(\frac sp 2^{s-1}c^{(2)}_{s,\Delta_{\max}}  L_{b,\sigma}^s\Big)^{\frac1p}& \quad \mbox{if } p \in(1,2)
	\end{array}
	\right.$$
	where $s=p+1$, $c_p^{(1)}$ and $c^{(2)}_{p,\Delta_{\max}} $ are defined in Lemma $\ref{lemme1}$.
\end{thm}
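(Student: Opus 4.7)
\bigskip
\noindent\textbf{Proof plan for Theorem \ref{hybridrecursive}.}

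My plan is to mirror the structure of the proof of Theorem \ref{erreurXQR}, with the modification that now the operator $\mathcal{E}_k$ must be controlled in \emph{both} arguments, because the innovations themselves are replaced by their quantized version. The first step is therefore to establish a two-parameter Lipschitz estimate of the form
\[
\|\mathcal{E}_k(x,\varepsilon)-\mathcal{E}_k(x',\varepsilon')\|_p \leq [F^x_k]_{\rm Lip}\|x-x'\|_p + [F^{\varepsilon}_k]_{\rm Lip}\|\varepsilon-\varepsilon'\|_p ,
\]
by writing $\mathcal{E}_k(x,\varepsilon)-\mathcal{E}_k(x',\varepsilon')= \big[(x-x')+\Delta(b_k(x)-b_k(x'))+\sqrt{\Delta}(\sigma_k(x)-\sigma_k(x'))\varepsilon\big]+\sqrt{\Delta}\sigma_k(x')(\varepsilon-\varepsilon')$ and treating the bracketed term as in the proof of Theorem \ref{erreurXQR}. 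When $p\!\in[2,2+d)$, I would apply Lemma \ref{lemme1} to the bracketed term (exploiting $\E\varepsilon=0$ and independence) to obtain an $|x-x'|^p$-bound with exponent $e^{\Delta(\cdots)}$, and bound the $\sqrt{\Delta}\sigma_k(x')(\varepsilon-\varepsilon')$ contribution via the Lipschitz/linear-growth of $\sigma$. When $p\!\in(1,2)$, I would reuse the trick from the sub-quadratic case of Theorem \ref{erreurXQR}: write $|\,\cdot\,|^p=|\,\cdot\,|^s|\,\cdot\,|^{p-s}$ with $s=p+1>2$, bound the negative-exponent factor trivially by an $|x-x'|^{p-s}$, apply the centered-increment inequality $(\ref{inegalitedulemme})$ to the $|\,\cdot\,|^s$ factor, use Young's inequality on the cross-term $|x-x'|^{p-s}|\varepsilon-\varepsilon'|^s$ to separate the two error contributions, and finally invoke $1+x\leq e^x$ and the $r=1/p$ concavity inequality $(\ref{a+bexpor})$ to reach the exponential Lipschitz form.

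Once $\mathcal{E}_k$ is controlled, the second step is a direct telescoping in $l$. Writing
\[
\|\bar X_{k+1}-\widetilde X_{k+1}\|_p = \|\mathcal{E}_k(\bar X_k,\varepsilon_{k+1})-\mathcal{E}_k(\widehat X_k,\widehat\varepsilon_{k+1})\|_p \leq [F^x_k]_{\rm Lip}\|\bar X_k-\widehat X_k\|_p + [F^{\varepsilon}_k]_{\rm Lip}\|\varepsilon_{k+1}-\widehat\varepsilon_{k+1}\|_p,
\]
and splitting $\|\bar X_k-\widehat X_k\|_p\leq\|\bar X_k-\widetilde X_k\|_p+\|\widetilde X_k-\widehat X_k\|_p$, an immediate induction using $\widehat X_0=\widetilde X_0=x_0$ yields
\[
\|\bar X_k-\widehat X_k\|_p \leq \sum_{l=1}^{k} [F^x_k]_{\rm Lip}^{k-l}\|\widetilde X_l-\widehat X_l\|_p + \sum_{l=1}^{k-1} [F^{\varepsilon}_k]_{\rm Lip}^{k-l}\|\varepsilon_l-\widehat\varepsilon_l\|_p .
\]

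The third step is to quantify each term on the right. For $\|\widetilde X_l-\widehat X_l\|_p$, since $\widehat X_l$ is a stationary quadratic optimal quantizer of $\widetilde X_l$, I proceed exactly as in Theorem \ref{erreurXQR}: for $p\!\in(1,2)$, use the monotonicity $\|\cdot\|_p\leq\|\cdot\|_2$ and Pierce's Lemma $(\ref{Pierce})$ with some $\delta>0$; for $p\!\in[2,2+d)$, invoke the distortion mismatch Theorem \ref{distmismatch}$(b)$ with $r=2$, $s=p$ and $r'=2+\delta>\tfrac{pd}{d+2-p}$, both yielding a bound of the form $\|\widetilde X_l\|_{2+\delta}(N_l^X)^{-1/d}$. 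Plugging the moment estimate from Lemma \ref{Xtildequad} with $r=2+\delta$ produces the factor $C_{2+\delta,b,\sigma,T}^{1/(2+\delta)}$. For $\|\varepsilon_l-\widehat\varepsilon_l\|_p$, since $\widehat\varepsilon_l$ is an $L^p$-optimal quantizer of the Gaussian $\varepsilon_l$ (all moments exist), a straight application of Pierce's Lemma $(\ref{Pierce})$ yields $\kappa_{d,p,\delta}\|\varepsilon_l\|_p(N_l^{\varepsilon})^{-1/d}$. Combining these with the telescoping inequality gives the announced bound.

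The main technical obstacle is the two-parameter Lipschitz estimate in the sub-quadratic regime $p\in(1,2)$: one must simultaneously isolate the dependencies in $x-x'$ and $\varepsilon-\varepsilon'$ inside the $|\cdot|^s$ expansion while keeping the cross-term under control, which forces the Young-type splitting responsible for the extra $\frac{p-s}{p}$ contribution in $[F^x_k]_{\rm Lip}$ and the $\frac{s}{p}$ factor in $[F^{\varepsilon}_k]_{\rm Lip}$. The rest of the argument is a careful repetition of the techniques already used for Theorem \ref{erreurXQR}.
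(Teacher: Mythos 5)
Your plan reproduces the paper's own proof in all essentials: the two-variable $L^p$-Lipschitz estimate for $\mathcal{E}_k$ (quadratic case via Lemma \ref{lemme1} together with a triangle/$2^{p-1}$ splitting of the noise contribution, sub-quadratic case via the $s=p+1$ factorization, inequality (\ref{inegalitedulemme}) and Young's inequality on the cross term, which is exactly where the $\frac{p-s}{p}$ and $\frac sp$ constants arise), followed by the same telescoping induction and the same final step (Pierce's Lemma for $\|\ve_l-\widehat\ve_l\|_p$, and the distortion-mismatch Theorem \ref{distmismatch} plus Lemma \ref{Xtildequad} for $\|\widetilde X_l-\widehat X_l\|_p$). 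The only cosmetic deviation is that for $p\in[2,2+d)$ you peel off $\sqrt{\Delta}\,\sigma_k(x')(\ve-\ve')$ by Minkowski before invoking Lemma \ref{lemme1}, whereas the paper applies the lemma to the full noise $\sigma_k(x)\ve-\sigma_k(x')\ve'$ and splits afterwards; this is an equivalent rearrangement and does not change the argument.
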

\begin{proof}
	We start by showing that $\mathcal{E}_k$ is Lipschitz continuous with respect to its two variables. For every $x,x'  \in \R^d$ and $\R^d$-valued r.v. $\ve$ and $\ve'$ with standard Normal distribution, we consider two cases depending on the values of $p$.\\
	$\bullet$ If $p \in [2,2+d)$: Always keeping in mind that $\Delta< \Delta_{\max}$, Lemma $\ref{lemme1}$ yields
	\begin{align*}
	\E|\mathcal{E}_k(x,\ve)-\mathcal{E}_k(x',\ve')|^p =&\, \E\big|x-x'+\Delta\big(b(x)-b(x')\big)+\sqrt{\Delta} \big(\sigma(x)\ve-\sigma(x')\ve'\big)\big|^p\\
	\leq  & \,\big|x-x'+\Delta\big(b(x)-b(x')\big)\big|^p \big(1+c_p^{(1)} \Delta\big) +\Delta c^{(2)}_{p,\Delta_{\max}}  \E\big|\sigma(x)\ve-\sigma(x')\ve'\big|^p\\
	\leq & \,|x-x'|^p\big(1+\Delta [b]_{\rm Lip}\big)^p\big(1+c_p^{(1)}\Delta\big)+ \Delta c^{(2)}_{p,\Delta_{\max}}  \E\big|\sigma(x)\ve-\sigma(x')\ve'\big|^p
	\end{align*}
	where $c_p^{(1)}$ and $c^{(2)}_{p,\Delta_{\max}} $ are defined in Lemma $\ref{lemme1}$. Now, noticing that $|\sigma(x)\ve-\sigma(x')\ve'|=|\sigma(x)\ve-\sigma(x')\ve+\sigma(x')\ve-\sigma(x')\ve'|$ and using $(a+b)^p\leq 2^{p-1}(a^p+b^p)$ yield
	\begin{align*}
	\E|\mathcal{E}_k(x,\ve)-\mathcal{E}_k(x',\ve')|^p\leq & \, |x-x'|^p(1+\Delta [b]_{\rm Lip})^p(1+c_p^{(1)}\Delta)\\
	&+
	2^{p-1}c^{(2)}_{p,\Delta_{\max}} \Delta \Big(\E|\sigma(x)\ve-\sigma(x)\ve'|^p+ \E|\sigma(x)\ve'-\sigma(x')\ve'|^p\Big)\\
	\leq & \, |x-x'|^p\Big((1+\Delta [b]_{\rm Lip})^p(1+c_p^{(1)}\Delta)+2^{p-1}\Delta c^{(2)}_{p,\Delta_{\max}}  [\sigma]_{\rm Lip} \E|\ve'|^p\Big)\\
	&+ 2^{p-1}\Delta c^{(2)}_{p,\Delta_{\max}}  \|\sigma\|_{\infty}\E|\ve-\ve'|^p.
	\end{align*}
	Now, using the fact that $1+x \leq e^x$ yields
	\[ \E|\mathcal{E}_k(x,\ve)-\mathcal{E}_k(x',\ve')|^p\leq e^{\bar C \Delta}|x-x'|^p+\Delta \widetilde{C} \E|\ve-\ve'|^p  \]
	where $\bar C=p[b]_{\rm Lip}+c_p^{(1)}+2^{(p-3)_++p-1}(p-1)\big(1+\frac p2 \Delta_{\max}^{\frac p2 -1} \big) [\sigma]_{\rm Lip}$ and $\widetilde C= 2^{(p-3)_++p-1}(p-1)\big(1+\frac p2 \Delta_{\max}^{\frac p2 -1} \big) \|\sigma\|_{\infty}$. Then, applying $(a+b)^{\frac1p}\leq a^{\frac1p}+b^{\frac1p}$ for $a,b>0$ and $p>1$ yields
	\[ \|\mathcal{E}_k(x,\ve)-\mathcal{E}_k(x',\ve')\|_p \leq e^{\frac{\bar C \Delta}{p}}\|x-x'\|_p+(\Delta \widetilde{C})^{\frac1p} \|\ve-\ve'\|_p. \]
	Consequently, $\mathcal{E}_k$ is Lipschitz continuous for $k \in \{1, \ldots ,n\}$ and for $p \in [2,2+d)$ with Lipschitz coefficients $[F^x_k]_{\rm Lip}\leq e^{\Delta \bar C/p}$ and $[F^{\ve}_k]_{\rm Lip}\leq (\Delta \widetilde{C})^{\frac1p}$.\medskip \\
	$\bullet$ If $p\in (1,2)$: Consider $s=p+1>2$ so that $p-s<0$. One has
	$$\E|\mathcal{E}_k(x,\ve)-\mathcal{E}_k(x',\ve')|^p=\E \left[|\mathcal{E}_k(x,\ve)-\mathcal{E}_k(x',\ve')|^s\, |\mathcal{E}_k(x,\ve)-\mathcal{E}_k(x',\ve')|^{p-s} \right].$$
On the one hand, 
\begin{align*}
|\mathcal{E}_k(x,\ve)-\mathcal{E}_k(x',\ve')|^{p-s} 
& \leq |x-x'|^{p-s}\left(1+\Delta [b]_{\rm Lip}+\sqrt{\Delta}\frac{|\sigma(x)\ve-\sigma(x')\ve'|}{|x-x'|} \right)^{p-s}\\
& \leq |x-x'|^{p-s}e^{(p-s)\left(1+\Delta [b]_{\rm Lip}+\sqrt{\Delta}\frac{|\sigma(x)\ve-\sigma(x')\ve'|}{|x-x'|}\right) } \quad (\mbox{since } 1+x \leq e^x)\\
& \leq |x-x'|^{p-s} \qquad (\mbox{ since } p-s<0).
\end{align*}
On the other hand, using inequality $(\ref{inegalitedulemme})$ from the proof of Lemma \ref{lemme1} (see Appendix), and noting $a=x-x'+\Delta [b]_{\rm Lip}(x-x')$ and $AZ=\sigma(x)\ve-\sigma(x')\ve'$, yields
\begin{align*}
|\mathcal{E}_k(x,\ve)-\mathcal{E}_k(x',\ve')|^{s}  \leq &|x-x'+\Delta [b]_{\rm Lip}(x-x')+\sqrt{\Delta}(\sigma(x)\ve-\sigma(x')\ve')|^{s}\\
\leq & |a|^s (1+\Delta c_s^{(1)}+ s\left(|a|^{s-1} \frac{a}{|a|} | A \sqrt{\Delta}Z \right)+ \Delta c_{2}^{(s,\Delta_{\max})} |AZ|^s.
\end{align*}
At this stage, one notices that $|a|^s\leq |x-x'|^s(1+\Delta [b]_{\rm Lip})^s$ and that
\begin{align*}
|AZ|  =|\sigma(x)\ve-\sigma(x')\ve'| \leq |\sigma(x)\ve-\sigma(x')\ve|+|\sigma(x')\ve-\sigma(x')\ve'| \leq [\sigma]_{\rm Lip} |x-x'| |\ve| +|\sigma(x')| |\ve-\ve'|,
\end{align*}
so that 
$$|AZ|^s \leq 2^{s-1} \left( [\sigma]_{\rm Lip}^s |x-x'|^s |\ve|^s +\|\sigma\|_{\infty}^s |\ve-\ve'|^s \right).$$
Hence, since $1+x \leq e^x$,
\begin{align*}
|\mathcal{E}_k(x,\ve)-\mathcal{E}_k(x',\ve')|^{s} \leq &|x-x'|^s (1+\Delta c_s^{(1)})(1+\Delta [b]_{\rm Lip})^s + s\left(|a|^{s-1} \frac{a}{|a|} | A \sqrt{\Delta}Z \right)\\
 & +\Delta c^{(2)}_{s,\Delta_{\max}} 2^{s-1} \left( [\sigma]_{\rm Lip}^s |x-x'|^s |\ve|^s +\|\sigma\|_{\infty}^s |\ve-\ve'|^s \right)\\
 \leq & |x-x'|^s e^{\Delta (c_s^{(1)}+s[b]_{\rm Lip})} + s\left(|a|^{s-1} \frac{a}{|a|} | A \sqrt{\Delta}Z \right)\\
 & +\Delta c^{(2)}_{s,\Delta_{\max}} 2^{s-1} \left( [\sigma]_{\rm Lip}^s |x-x'|^s |\ve|^s +\|\sigma\|_{\infty}^s |\ve-\ve'|^s \right).
\end{align*}
Consequently, applying the expectation and keeping in mind that $\E|AZ|=0$, we obtain
\begin{align*}
\E|\mathcal{E}_k(x,\ve)-\mathcal{E}_k(x',\ve')|^p \leq & \;e^{\Delta (c_s^{(1)}+s[b]_{\rm Lip})} \E|x-x'|^p\\
&+\Delta c^{(2)}_{s,\Delta_{\max}} 2^{s-1} \left( [\sigma]_{\rm Lip}^s \E[|x-x'|^p |\ve|^s] +\|\sigma\|_{\infty}^s \E[|\ve-\ve'|^s |x-x'|^{p-s}] \right).
\end{align*}
Using the fact that $\ve$ is independent of $\{x,x'\}$ and applying Young inequality with the conjugate exponents $\frac ps$ and $\frac{p}{p-s}$ to $\E[|\ve-\ve'|^s |x-x'|^{p-s}]$ yields 
\begin{align*}
\E|\mathcal{E}_k(x,\ve)-\mathcal{E}_k(x',\ve')|^p & \leq  \E|x-x'|^p \left(e^{\Delta (c_s^{(1)}+s[b]_{\rm Lip})}+ \Delta c^{(2)}_{s,\Delta_{\max}} 2^{s-1}[\sigma]_{\rm Lip}^s \E|\ve|^s\right) \\
& \quad + \Delta c^{(2)}_{s,\Delta_{\max}} 2^{s-1} \|\sigma\|^s_{\infty} \left(\frac sp \E|\ve-\ve'|^p+ \frac{p-s}{p} \E|x-x'|^p \right)\\
& \leq   \E|x-x'|^p\left(e^{\Delta (c_s^{(1)}+s[b]_{\rm Lip})}+ \Delta \tilde{\kappa}_1\right)+\Delta \tilde{\kappa}_2 \E|\ve-\ve'|^p\\
&\leq  \E|x-x'|^pe^{\Delta (c_s^{(1)}+s[b]_{\rm Lip})} (1+\Delta \tilde{\kappa}_1 e^{-\Delta (c_s^{(1)}+s[b]_{\rm Lip})})+\Delta \tilde{\kappa}_2 \E|\ve-\ve'|^p\\
&\leq  \E|x-x'|^pe^{\Delta (c_s^{(1)}+s[b]_{\rm Lip}+\tilde{\kappa}_1)}+\Delta \tilde{\kappa}_2 \E|\ve-\ve'|^p
\end{align*}
where $\tilde{\kappa}_1=c^{(2)}_{s,\Delta_{\max}}2^{s-1} \left([\sigma]_{\rm Lip}^s \E|\ve|^s+\|\sigma\|^s_{\infty}\frac{p-s}{p}\right)$ and $ \tilde{\kappa}_2=c^{(2)}_{s,\Delta_{\max}}2^{s-1}\|\sigma\|^s_{\infty}\frac sp$.
Then, 
$$\|\mathcal{E}_k(x,\ve)-\mathcal{E}_k(x',\ve')\|_p \leq \|x-x'\|_p e^{\Delta \kappa_1} +\|\ve-\ve'\|_p \Delta^{\frac1p} \kappa_2$$
where $\kappa_1=(c_s^{(1)}+s[b]_{\rm Lip}+\tilde{\kappa}_1)/p$ and $ \kappa_2=\tilde{\kappa}_2^{\frac 1p}.$
Consequently, $\mathcal{E}_k$ is lipschitz continuous for $k \in \{1, \ldots ,n\}$ with Lipschitz coefficients $ \ds [F^x]_{\rm Lip}\leq e^{\Delta \kappa_1}$ and $[F^{\ve}]_{\rm Lip} \leq \Delta^{\frac1p} \kappa_2$, for $p \in (1,2)$.\medskip\\
For the section step, the Lipschitz continuity of $\mathcal{E}_k$ yields 
	\begin{align*}
	\|\bar{X}_{k+1} -\widetilde{X}_{k+1}\|_p \leq & \|\mathcal{E}_k(\bar{X}_k,\ve_k)-\mathcal{E}_k(\widehat{X}_k,\widehat{\ve}_k)\|_p \\
	\leq & [F^x]_{\rm Lip}\|\bar{X}_k-\widehat{X}_k\|_p+[F^{\ve}]_{\rm Lip}\|\ve_k-\widehat{\ve}_k\|_p\\
	\leq & [F^x]_{\rm Lip} \|\bar{X}_k-\widetilde{X}_k\|_p+[F^x]_{\rm Lip} \|\widetilde{X}_k-\widehat{X}_k\|_p+[F^{\ve}]_{\rm Lip} \|\ve_k-\widehat{\ve}_k\|_p.
	\end{align*}
	Then, by induction, one has 
	$$\|\bar{X}_k-\widetilde{X}_k\|_p \leq \sum_{l=1}^{k-1} [F^x]_{\rm Lip}^{k-l} \|\widehat{X}_l-\widetilde{X}_l\|_p+[F^{\ve}]_{\rm Lip}^{k-l}\|\ve_l-\widehat{\ve}_l\|_p$$
	so that 
	\begin{align*}
	\|\bar{X}_k-\widehat{X}_k\|_p & \leq \|\bar{X}_k-\widetilde{X}_k\|_p+\|\widetilde{X}_k-\widehat{X}_k\|_p  \leq \sum_{l=1}^{k} [F^x]_{\rm Lip}^{k-l} \|\widetilde{X}_l-\widehat{X}_l\|_p+\sum_{l=1}^{k-1} [F^{\ve}]_{\rm Lip}^{k-l} \|\ve_l-\widehat{\ve}_l\|_p.
	\end{align*}
	Now, since $\widehat \ve_l$ is an optimal quantization of $\ve_l$ of size $N_l^{\ve}$, then Pierce's Lemma $\ref{Zadoretpierce}(b)$ yields 
	\begin{equation}
	\|\bar{X}_k-\widehat{X}_k\|_p \leq \sum_{l=1}^{k} [F^x]_{\rm Lip}^{k-l} \|\widetilde{X}_l-\widehat{X}_l\|_p + \sum_{l=1}^{k-1} [F^{\ve}]_{\rm Lip}^{k-l} \kappa_{d,p,\eta} \|\ve_l\|_{p+\eta} (N_l^{\ve})^{-\frac 1d}.
	\end{equation} 
	As for the error terms $\|\widetilde{X}_l-\widehat{X}_l\|_p$, one uses the same techniques as in the end of the proof of Theorem \ref{erreurXQR}, namely the distortion mismatch Theorem \ref{distmismatch} and Lemma \ref{Xtildequad}, to deduce the result.
	\hfill $\square$
\end{proof}
\section{Time discretization of the RBSDE}
\label{timedisc}
We consider the reflected backward stochastic differential equation RBSDE $(\ref{BSDE})$ with maturity $T$ given in the introduction and recalled below
\begin{equation*}
Y_t=g(X_T)+\int_t^T f(s,X_s,Y_s,Z_s)ds+K_T-K_t -\int_t^T Z_s . dW_s\, , \qquad t \in [0,T],
\end{equation*}
\begin{equation*}
Y_t \geq h(t,X_t) \quad \mbox {and} \quad \int_0^T (Y_s-h(s,X_s))dK_s =0
\end{equation*}
where $(W_t)_{t \geq 0}$ is a $q$-dimensional Brownian motion independent of $X_0$ and $(X_t)_{t \geq 0}$ is an $\R^d$-valued Brownian diffusion process solution to the SDE $(\ref{SDE})$ given in the introduction and recalled below
\begin{equation*}
X_t=X_0+\int_0^t b(s,X_s)ds +\int_0^t \sigma(s,X_s)dW_s\, , \qquad X_0=x_0 \in \R^d,
\end{equation*}
As explained, we need to approximate the solutions of these equations by discretization schemes. The time and space discretization of the forward process $(X_t)_{t \in [0,T]}$ have already been investigated and detailed in Section $\ref{QR}$. We proceed now with the time discretization of the solution of the RBSDE. Plugging the time-discretized process $(\bar{X}_{t_k})_{0 \leq k \leq n}$ in $(\ref{BSDE})$ will not make it possible to find an exact solution for the RBSDE.
Another approximation is needed, in which we discretize the term $Z_t$ itself: considering a sequence $(\ve_k)_{0 \leq k \leq n}$ of i.i.d. normally distributed random variables, the time discretization scheme associated to $(Y_t,Z_t)$ is given by the following backward recursion
\begin{align}
\label{YbarT}
\bar{Y}_{T} & = g(\bar{X}_{T})\\
\label{Ytildek}
\widetilde{Y}_{t_k} &=\E\big(\bar{Y}_{t_{k+1}}\,|\, \mathcal{F}_{t_k}\big)+\Delta \mathcal{E}_k\big(\bar{X}_{t_k},\, \E\big(\bar{Y}_{t_{k+1}}\,|\, \mathcal{F}_{t_k}\big),\bar{\zeta}_{t_k}\big)\, , \quad k=0,\ldots,n-1\\
\label{Zetabark}
\bar{\zeta}_{t_{k}}&=\frac{1}{\sqrt{\Delta}}\E\big(\bar{Y}_{t_{k+1}}\ve_{k+1}\,|\,\mathcal{F}_{t_k}\big)\, , \quad k=0,\ldots,n-1,\\
\label{Ybark}
\bar{Y}_{t_{k}}& = \widetilde{Y}_{t_{k}} \vee h_k(\bar{X}_{t_k})\, , \quad k=0,\ldots,n-1.
\end{align} 
As stated previously, this scheme differs from what was previously studied in the literature (see the references in the Introduction) since the conditional expectation is applied directly to $\bar{Y}_{t_{k+1}}$ inside the driver function which depends itself on the discretization $\bar{\zeta}_{t_{k}}$ of $Z_{t_k}.$ That is why it is interesting to establish a priori estimates for the error induced by the approximation with such a time discretization scheme. We note that, among others, time discretization errors for RBSDEs with a driver independent of $Z_t$ were establsihed in $\cite{BaPaSPA}$, errors for BSDEs (without reflection) with a driver depending on $Z_t$ and on the conditional expectation of $\bar Y_t$  in $\cite{PaSa18}$ and those for BSDEs (without reflection) with a driver depending on $Z_t$ but where the conditional expectation is applied to the whole function $f$ were studied in $\cite{Zhang04}$.\\

Since $\bar{X}_{t_k}$ is a Markov chain, one shows that there exists, for every $k \in \{0,\ldots,n\}$, Borel functions $\bar{y}_{t_k}$, $\widetilde{y}_{t_k}$ and $\bar{z}_{t_k}$ such that $\bar{Y}_{t_k}=\bar{y}_{t_k}(\bar{X}_{t_k})$, $\widetilde{Y}_{t_k}=\widetilde{y}_{t_k}(\bar{X}_{t_k})$ and $\bar{\zeta}_{t_k}=\bar{z}_{t_k}(\bar{X}_{t_k})$ and defined by
\begin{align}
\label{ybarT}
\bar{y}_T(x)&=g(x),\\
\label{ytildek}
\widetilde{y}_{t_k}(x)&=\E\,\bar{y}_{t_{k+1}}\big( \mathcal{E}_k(x,\ve_{k+1})\big)+\Delta \mathcal{E}_k\left(x,\E\, \bar{y}_{t_{k+1}}\big( \mathcal{E}_k(x,\ve_{k+1})\big), \bar{z}_{t_k}(x) \right)\\
\label{zbark}
\bar{z}_{t_k}(x)&=\frac{1}{\sqrt{\Delta}} \E\Big( \bar{y}_{t_{k+1}}\big( \mathcal{E}_k(x,\ve_{k+1})\big)\ve_{k+1}\Big)\\
\label{ybark}
\bar{y}_{t_k}(x)&=\widetilde{y}_{t_k}(x) \vee h_k(x).
\end{align}
where $\mathcal{E}_k(x, \ve_{k+1})=x+\Delta b_k(x)+\sqrt{\Delta} \sigma_k(x)\ve_{k+1}$ and $(\ve_{k})_{k\geq 0}$ are i.i.d random variables with distribution $\mathcal{N}(0,I_q)$.\\

In order to establish error bounds between $(Y_t,Z_t)$ and $(\bar{Y}_{t_k},\bar{Z}_{t_k})$, it is useful to introduce a time continuous process which extends $\ds \bar{Y}_{t_k}$. In fact, one notes that since the variable $\sum_{k=1}^{n-1}\bar{Y}_{t_{k+1}}-\E(\bar{Y}_{t_{k+1}}|\mathcal{F}_{t_k})$ is square integrable and measurable with respect to the augmented Brownian filtration $\mathcal{F}_{t_k}$, then, by the martingale representation Theorem, it can be considered as the terminal value of a Brownian martingale $\int_0^T \bar Z_s dW_s$ where the process $\bar{Z}_t$ is such that $\E \sup_{[0,T]} |\bar{Z}_s|^2 \leq \gamma_1< +\infty$ for a finite constant $\gamma_1$. So,
\begin{equation}
\bar{Y}_{t_{k+1}}-\E(\bar{Y}_{t_{k+1}}|\mathcal{F}_{t_k})=\int_{t_k}^{t_{k+1}}\bar{Z}_s dW_s \qquad \mbox{ for } \; k=0,\ldots,n-1.
\end{equation}
We note that 
\begin{equation}
\label{zetabar}
\bar{\zeta}_{t_k}=\frac{1}{\sqrt{\Delta}} \E\left(\bar{Y}_{t_{k+1}}\ve_{k+1}\, |\, \mathcal{F}_{t_k} \right) = \frac{1}{\Delta} \E\left(\int_{t_k}^{t_{k+1}}\bar{Z}_s ds\, |\, \mathcal{F}_{t_k} \right).
\end{equation}
Likewise, we define 
\begin{equation}
\label{zeta}
\zeta_{t_k}=\frac{1}{\Delta} \E\left(\int_{t_k}^{t_{k+1}}Z_s ds|\mathcal{F}_{t_k} \right)
\end{equation}
where $Z_s$ is the solution of the RBSDE $(\ref{BSDE})$ and one checks that $\bar{\zeta}_t$ is the best approximation of $\bar{Z}_t$ and $\zeta_t$ the best approximation of $Z_t$ in $L^2(d\P \times dt)$ among $\mathcal{F}_t$-measurable processes that are piecewise constant on the time intervals $[t_k,t_{k+1}[$.

Consequently, one may define (by a continuous extension) the c\`adl\`ag process $\widetilde{Y}_{t}$ on $[t_k,t_{k+1})$ and the l\`adc\`ag process  $\bar{Y}_{t}$ on $(t_k,t_{k+1}]$, by
\begin{equation}
\label{YbarYtildecont}
\widetilde{Y}_t=\bar{Y}_t=\bar{Y}_{t_{k+1}}-(t_{k+1}-t)\mathcal{E}_k\big(\bar{X}_{t_k},\E(\bar{Y}_{t_{k+1}}\,|\,\mathcal{F}_{t_k}),\bar{\zeta}_{t_k}\big)-\int_{t}^{t_{k+1}} \bar{Z}_s dW_s, 
\end{equation}
and the increasing positive process 
$$\bar{K}_{t_k}=\sum_{j=0}^k \left(h_j(\bar{X}_{t_j})-\widetilde{Y}_{t_k} \right)_+$$
such that $\bar{K}_t=\bar{K}_{t_k}$ for every $t \in (t_k,t_{k+1})$. Finally, we have the following representation
\begin{equation}
\label{Ytildecontgeneral}
\widetilde{Y}_t=\bar{Y}_T+\int_t^T f(\underline{s},\bar{X}_{\underline{s}},\E(\bar{Y}_{\bar{s}}\,|\,\mathcal{F}_{\underline{s}}),\bar{\zeta}_{\underline{s}}) \,ds -\int_{t}^{t_{k+1}} \bar{Z}_s dW_s +\bar{K}_T-\bar{K}_t.
\end{equation}
where $\underline s=t_k$ and $\bar s =t_{k+1}$ if $s \in (t_k,t_{k+1})$. Note that the introduction of $\bar{K}$ is mainly due to the fact that
$$\bar{Y}_{t_k}=\widetilde{Y}_{t_k} \vee h(t_k,\bar{X}_{t_k})=\widetilde{Y}_{t_k}+\left(h(t_j,\bar{X}_{t_j})-\widetilde{Y}_{t_k} \right)_+=\widetilde{Y}_{t_k}+\bar{K}_{t_k}-\bar{K}_{t_{k-1}}.$$

In the following, we will denote $\bar Y_k, \bar \zeta_k,\bar y_k, \bar K_k,$ etc. instead of $\bar Y_{t_k}, \bar \zeta_{t_k}, \bar y_{t_k}, \bar K_{t_k},$ etc. to alleviate notations, as well as $ \E_k(.)$ instead of $\E(.|\mathcal{F}_{t_k}). $  We recall that $\Delta \in [0,\Delta_{\max}), \, \Delta_{\max}>0$.
\begin{thm}
	\label{timeerror}
	Let $Y_t$ be the solution of $(\ref{BSDE})$ and $(\bar Y_k)_{0 \leq k \leq n}$ the corresponding time discretized process defined by $(\ref{Ybark})$. Assume that the functions $f$ and $h$ are Lipschitz continuous. Then, for every $k \in \{1, \ldots,n\}$,
	$$\E|Y_{k}-\bar{Y}_{k}|^2 \leq C_{b,\sigma,f,h,T}\left(\Delta +\int_0^T \E|Z_s-Z_{\underline{s}}|^2 ds \right)$$
	where $\underline{s}=t_k$ if $s \in [t_k,t_{k+1})$ and $C_{b,\sigma,f,h,T}$ is a real positive constant.\\
	Furthermore, there exists a finite constant $C>0$ such that
	$$\int_0^T \E|Z_s-Z_{\underline{s}}|^2 ds \leq C \sqrt{\Delta}.$$
\end{thm}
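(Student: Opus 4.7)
The plan is to compare the continuous BSDE $(\ref{BSDE})$ with the c\`adl\`ag extension $(\widetilde Y_t, \bar Z_t, \bar K_t)$ from $(\ref{YbarYtildecont})$--$(\ref{Ytildecontgeneral})$, then pass from $\widetilde Y_{t_k}$ back to $\bar Y_{t_k}$ via $\bar Y_{t_k}=\widetilde Y_{t_k}+(\bar K_{t_k}-\bar K_{t_{k-1}})$. Applying It\^o's formula to $|Y_t-\widetilde Y_t|^2$ on $[t,T]$ and taking expectations yields
\begin{align*}
\E|Y_t-\widetilde Y_t|^2+\E\int_t^T |Z_s-\bar Z_s|^2\, ds =\;& \E|g(X_T)-g(\bar X_T)|^2 \\
&+2\E\int_t^T (Y_s-\widetilde Y_s)\big(f_s-\bar f_s\big)\, ds +2\E\int_t^T (Y_s-\widetilde Y_s)\, d(K_s-\bar K_s),
\end{align*}
with $f_s = f(s, X_s, Y_s, Z_s)$ and $\bar f_s = f(\underline s, \bar X_{\underline s}, \E(\bar Y_{\bar s}|\mathcal{F}_{\underline s}), \bar\zeta_{\underline s})$.

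For the driver term I would use the Lipschitz continuity of $f$ and Young's inequality, combined with the decomposition $Z_s-\bar\zeta_{\underline s}=(Z_s-Z_{\underline s})+(Z_{\underline s}-\zeta_{\underline s})+(\zeta_{\underline s}-\bar\zeta_{\underline s})$. Definitions $(\ref{zeta})$ and $(\ref{zetabar})$ make $\zeta$ (resp.\ $\bar\zeta$) the $L^2(dt\otimes d\P)$-projection of $Z$ (resp.\ $\bar Z$) onto the processes piecewise constant on the mesh, hence $\|Z_{\underline s}-\zeta_{\underline s}\|_{L^2}\le\|Z_s-Z_{\underline s}\|_{L^2}$ and $\|\zeta-\bar\zeta\|_{L^2(dt\otimes d\P)}\le \|Z-\bar Z\|_{L^2(dt\otimes d\P)}$ (the latter ready to be absorbed into the LHS via Young). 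The Euler bound $\|X_s-\bar X_{\underline s}\|_2\le C\sqrt\Delta$ handles the time/space regularity pieces, while $|Y_s-\E(\bar Y_{\bar s}|\mathcal{F}_{\underline s})|$ is split as $|Y_s-Y_{\underline s}|+|Y_{\underline s}-\bar Y_{t_{k+1}}|+|\bar Y_{t_{k+1}}-\E(\bar Y_{t_{k+1}}|\mathcal{F}_{\underline s})|$: the first and third are $L^2$-oscillation terms of order $\sqrt\Delta$ (from BDG-type estimates on $Y$ and from the martingale-increment identity $(\ref{zetabar})$), and the middle one is folded into the recursion.

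The main technical obstacle is the reflection cross-term $2\E\int_t^T (Y_s-\widetilde Y_s)\, d(K_s-\bar K_s)$. Writing $(Y_s-\widetilde Y_s)=(Y_s-h(s,X_s))+\big(h(s,X_s)-h(\underline s,\bar X_{\underline s})\big)+(h(\underline s,\bar X_{\underline s})-\widetilde Y_s)$, the Skorokhod condition $\int (Y_s-h(s,X_s))dK_s=0$ and the discrete counterpart (which forces $\widetilde Y_{t_k}\le h_k(\bar X_{t_k})$ precisely on the jumps of $\bar K$) kill the diagonal contributions; what remains are cross-terms of the form $\E\int |h(s,X_s)-h(\underline s,\bar X_{\underline s})|\, d(K+\bar K)_s$, controlled by the Lipschitz continuity of $h$, H\"older's inequality, the $L^2$ Euler bound, and the uniform $L^2$-moment estimates on $K_T$ and $\bar K_T$ inherited from $(\ref{assumptionRBSDE})$ and its discrete analogue.

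Assembling all contributions, using Young's inequality to absorb a small multiple of $\E\int|Z_s-\bar Z_s|^2 ds$ into the left-hand side and a discrete backward Gronwall in $k$ (the Lipschitz dependence of $f$ in $y$ produces a sum $\Delta\sum_{j\ge k}\E|Y_j-\bar Y_j|^2$), one obtains
\[ \E|Y_{t_k}-\widetilde Y_{t_k}|^2 \leq C_{b,\sigma,f,h,T}\Big(\Delta+\int_0^T \E|Z_s-Z_{\underline s}|^2\, ds\Big), \]
and the bound on $\E|Y_k-\bar Y_k|^2$ follows after controlling the squared jump $|\bar Y_{t_k}-\widetilde Y_{t_k}|^2=(\bar K_{t_k}-\bar K_{t_{k-1}})^2$ by a comparison with its continuous counterpart. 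The second statement, $\int_0^T\E|Z_s-Z_{\underline s}|^2 ds\le C\sqrt\Delta$, is the now-classical $L^2$-path regularity of the $Z$-component for reflected BSDEs due to Zhang~\cite{Zhang04} (see also \cite{BoTo04,MaZhang05}); I would quote rather than re-prove it, since it depends only on the standing Lipschitz assumptions on $g,h,f$ and the regularity of the forward diffusion. The hardest bookkeeping is really the reflection cross-term; everything else follows a Zhang-type template adapted to the fact that the conditional expectation is applied inside the driver.
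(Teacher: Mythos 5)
Your route (global It\^o on $|Y_t-\widetilde Y_t|^2$ over $[t,T]$, Skorokhod conditions to neutralize the reflection term, Cauchy--Schwarz on the remaining cross-terms) is genuinely different from the paper's, and as sketched it has a real gap at exactly the point you call "the main technical obstacle". First, the diagonal contributions are \emph{not} all killed: $\int (Y_s-h(s,X_s))\,dK_s=0$ disposes of one piece, but the companion pieces $\int \big(h(\underline s,\bar X_{\underline s})-\widetilde Y_s\big)\,dK_s$ and the jump corrections coming from the discontinuities of $\bar K$ are not sign-definite, because $\widetilde Y$ (and $\bar Y$ off the grid) is never reflected against the \emph{continuous} obstacle; e.g.\ at a jump time $t_i$ with $\widetilde Y_{t_i}<Y_{t_i}<\bar Y_{t_i}=h_i(\bar X_{t_i})$ the It\^o jump term $(Y_{t_i}-\widetilde Y_{t_i})^2-(Y_{t_i}-\bar Y_{t_i})^2$ is negative, and the only generic per-jump bound is $[h]_{\rm Lip}^2|X_{t_i}-\bar X_{t_i}|^2=O(\Delta)$, which summed over up to $n$ jumps is $O(1)$. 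Second, even where your plan does go through, the surviving cross-terms $\E\int |h(s,X_s)-h(\underline s,\bar X_{\underline s})|\,d(K+\bar K)_s$ are controlled by Cauchy--Schwarz against $\|K_T\|_2+\|\bar K_T\|_2$ only at order $\sqrt\Delta$, not $\Delta$; so at best you obtain $\E|Y_k-\bar Y_k|^2\le C\big(\sqrt\Delta+\int_0^T\E|Z_s-Z_{\underline s}|^2ds\big)$, which recovers the final $\sqrt\Delta$ rate after quoting Zhang/Ma--Zhang for the second assertion (the paper also quotes it), but falls short of the stated display with the $\Delta$ term.

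The paper's proof is built precisely to avoid both problems: it introduces the stopping times $\tau^c$ (first time $K$ charges $\{\delta Y>0\}$) and $\tau^d$ (first grid time where $\bar K$ jumps while $\delta Y<0$) and applies It\^o only on $[t,\tau]$ with $\tau=\tau^c\wedge\tau^d$. On $[t,\tau)$ the term $\int \delta Y_s\,dK_s$ is nonpositive and the combined jump terms $\alpha_i=(Y_i-\widetilde Y_i)^2-(\delta Y_i)^2$ are nonnegative, so all reflection contributions can simply be discarded; the obstacle mismatch is paid only \emph{once}, at $\tau$, where one side saturates its obstacle ($Y_\tau=h(\tau,X_\tau)$ or $\bar Y_\tau=h_\tau(\bar X_\tau)$, or $\tau=T$), giving $|\delta Y_\tau|^2\le([h]_{\rm Lip}^2\vee[g]_{\rm Lip}^2)\,|X_\tau-\bar X_\tau|^2=O(\Delta)$, and Gronwall concludes. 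If you want to keep your global decomposition you must either add such a localization or a sharper argument for the jump/cross terms; note also that the paper needs the Lipschitz continuity of the functions $\bar y_k,\bar z_k$ (Lemma \ref{Lipandlg}) to control $f(\bar\Theta_{\underline s})$ in $L^2$, a step your sketch uses implicitly when invoking linear growth of the discrete driver.
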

The second part of the theorem is established in $\cite{MaZhang05}$, see Theorem $6.3$. The proof of the first part is postponed to the appendix (see Appendix B).
\section{Space discretization of the RBSDE}
\label{spacedisc}
After the time discretization, we move to the space discretization schemes to approximate the solution of the RBSDE. We rely on the recursive quantization $(\widehat{X}_{t_k})_{0 \leq k \leq n}$ of the time-discretized scheme $(\bar{X}_{t_k})_{0 \leq k \leq n}$ to obtain the recursive quantization scheme associated to $(\ref{YbarT})$-$(\ref{Ytildek})$-$(\ref{Zetabark})$-$(\ref{Ybark})$. If we consider a sequence $(\ve_k)_{0 \leq k \leq n}$ of i.i.d. random variables with distribution $\mathcal{N}(0,I_q)$, this scheme is defined recursively by
\begin{align}
\label{YchapT}
\widehat{Y}_{T} & = g(\widehat{X}_{T})\\
\label{Zetachapk}
\widehat{\zeta}_{t_{k}}&=\frac{1}{\sqrt{\Delta}}\E_k\, \big(\widehat{Y}_{t_{k+1}} \ve_{k+1}\big)\, , \quad k=0,\ldots,n-1,\\
\label{Ychapk}
\widehat{Y}_{t_{k}}& = \max\Big(h_k(\widehat{X}_{t_k})\, ,\, \E_k\widehat{Y}_{t_{k+1}}+\Delta \mathcal{E}_k\big(\widehat{X}_{t_k},\E_k\widehat{Y}_{t_{k+1}},\widehat{\zeta}_{t_k}\big) \Big)\, , \quad k=0,\ldots,n-1.
\end{align} 
where $(\widehat{X}_{t_k})_{0 \leq k \leq n}$ is the recursively quantized process associated to $(\bar{X}_{t_k})_{0 \leq k \leq n}$ given by $(\ref{RBSDE:quantifrecursive})$ or $(\ref{RBSDE:quantifrecursivehybride})$. This quantization scheme is different than the optimal (or marginal) quantization schemes that were usually applied before in theses situations, in $\cite{BaPaSPA,theseilland, PaSa18}$ for example. The main difference is that since recursive quantization preserve the Markov property, the process $\widehat{Y}_{t_k}$ is $\mathcal{F}_{t_k}$-measurable for every $k \in \{0,\ldots,n\}$ where $\mathcal{F}_{t_k}=\sigma(W_{t_1},\ldots,W_{t_k}, \mathcal{N}_{\P})$ which is not the case for optimal quantization. More details on the utility of this character of recursive quantization will be presented in Section $\ref{algorithmics}$.\\

In the following, we will reconsider the notations with the indices $k$ instead of $t_k$ for every $k \in \{0,\ldots,n\}$, and we establish an upper bound for the quantization error induced by approximating $\bar{Y}_k$ by $\widehat{Y}_k$ in $L^p$ for $p \in (1,2+d)$ and $k \in \{1,\ldots,n\}$.  We recall that $\Delta \in [0,\Delta_{\max}), \, \Delta_{\max}>0$.
\begin{thm}
	Let $(\bar Y_k)_{0 \leq k \leq n}$ be the time-discretized process defined by $(\ref{Ybark})$ and $(\widehat Y_k)_{0 \leq k \leq n}$ the corresponding recursive quantized process defined by $(\ref{Ychapk})$. For every $p \in (1,2+d)$ and every $k \in \{1,\ldots,n\}$,
	\begin{equation}
	\|\bar{Y}_k-\widehat{Y}_k\|_p \leq  \left(\frac{\kappa_2}{\kappa_1}(e^{(T-t_k)\kappa_1}-1)+e^{(T-t_k)\kappa_1}([g]_{\rm Lip}^p\vee [h]_{\rm Lip}^p) \right)\big\|\max_{k \leq l \leq n}\big|\bar{X}_l-\widehat{X}_l| \big\|_p
	\end{equation}
	where $\kappa_1=p\kappa+(p-1)2^{p-2}$, $\kappa_2=2^{p-2}[f]_{\rm Lip}^p(1+p\Delta^{p-1})$ and $\kappa= \frac{c_s^{(1)}+s[f]_{\rm Lip}+[f]_{\rm Lip}^s c^{(3)}_{s,\Delta_{\max},\ve_{k+1}}}{s}$, the positive finite constants $c_s^{(1)}$ and $c^{(3)}_{s,\Delta_{\max},\ve_{k+1}}$ are defined in Lemmas $\ref{lemme1}$ and $\ref{Xtildequad}$.
\end{thm}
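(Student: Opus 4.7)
The plan is to proceed by backward induction on $k$, aiming for a bound of the form $\|\bar Y_k - \widehat Y_k\|_p^p \leq C_k \,\bigl\|\max_{k \le l \le n}|\bar X_l - \widehat X_l|\bigr\|_p^p$ in which $C_k$ satisfies a Bellman--Gronwall recursion whose solution delivers the announced constants. The base case at $k=n$ is immediate from the Lipschitz continuity of $g$, giving $C_n = [g]_{\rm Lip}^p$. For $k<n$, writing $\delta Y_k := \bar Y_k - \widehat Y_k$ and $\delta X_k := \bar X_k - \widehat X_k$, I would exploit the $1$-Lipschitz property of $\max$ to write, pointwise,
\[
|\delta Y_k| \leq [h]_{\rm Lip}|\delta X_k| \;\vee\; |\bar A_k - \widehat A_k|,
\]
where $\bar A_k := \E_k \bar Y_{k+1} + \Delta f_k(\bar X_k, \E_k \bar Y_{k+1}, \bar\zeta_k)$ and $\widehat A_k$ is its quantized counterpart.

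The technical core is to bound $|\bar A_k - \widehat A_k|^p$. Following the mean-value linearization used in the proof of Lemma~\ref{Lipandlg}, one rewrites
\[
\bar A_k - \widehat A_k = \E_k\bigl(\delta Y_{k+1}\, M_k\bigr) + \Delta\, \mathcal A_k\, \delta X_k,
\]
with $M_k := 1 + \Delta \mathcal B_k + \sqrt{\Delta}\, \mathcal C_k\, \varepsilon_{k+1}$ and $\mathcal A_k, \mathcal B_k, \mathcal C_k$ being $\mathcal F_{t_k}$-measurable, of modulus at most $[f]_{\rm Lip}$. Applying Lemma~\ref{lemme1} conditionally on $\mathcal F_{t_k}$ at order $r=p$ when $p\geq 2$, or $r=s=p+1$ when $p\in(1,2)$ (mirroring the dichotomy of Theorem~\ref{erreurXQR}), yields $\E_k|M_k|^r \leq 1+\Delta r\kappa + O(\Delta^2)$ once $(1+\Delta\mathcal B_k)^r$ is absorbed into $e^{r\Delta[f]_{\rm Lip}}$. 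Conditional H\"older between $\delta Y_{k+1}$ and $M_k$, combined with Jensen to dominate $|\E_k(\cdot)|^p$ by $\E_k|\cdot|^p$, then produces $|\E_k(\delta Y_{k+1} M_k)|^p \leq e^{\Delta p\kappa}\, \E_k|\delta Y_{k+1}|^p$. A Young-type inequality $(u+v)^p \leq (1+\Delta \lambda)^{p-1} u^p + C_{p,\lambda}\, \Delta\, v^p$ applied to the split into the $M_k$-term and the $\Delta\mathcal A_k \delta X_k$-term absorbs the cross contribution and yields the pointwise estimate
\[
|\bar A_k - \widehat A_k|^p \leq (1+\Delta \kappa_1)\, \E_k|\delta Y_{k+1}|^p + \Delta \kappa_2\, |\delta X_k|^p.
\]

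Plugging this into the max bound, using $|\delta X_k|^p \leq \max_{k\le l\le n}|\delta X_l|^p$, and then taking expectations, produces the recursion $C_k = \max\bigl([h]_{\rm Lip}^p,\, (1+\Delta\kappa_1) C_{k+1} + \Delta\kappa_2\bigr)$ with $C_n = [g]_{\rm Lip}^p$. A discrete Gronwall argument then gives precisely $C_k \leq e^{(T-t_k)\kappa_1}\bigl([g]_{\rm Lip}^p \vee [h]_{\rm Lip}^p\bigr) + (\kappa_2/\kappa_1)\bigl(e^{(T-t_k)\kappa_1}-1\bigr)$. The hard part will be the control of $|\E_k(\delta Y_{k+1} M_k)|^p$: since both factors depend on $\varepsilon_{k+1}$ they cannot be decoupled via independence, and one must invoke Lemma~\ref{lemme1} at order $r \geq 2$ to reproduce the $\kappa$ stated in the theorem, with the exponent choice $r=p$ or $r=p+1$ tuned to each regime. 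A second delicate point is that the $\max$ structure must be preserved through the Bellman recursion so that $[h]_{\rm Lip}^p$ remains attached to the exponential prefactor rather than accumulating additively across the $n-k$ time steps, which is what $\max(a,b) \le a+b$ would ruin if applied prematurely.
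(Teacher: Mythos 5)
Your plan reproduces the paper's architecture almost step by step: the $1$-Lipschitz property of the max, the linearization of the driver increment through $\mathcal{F}_{t_k}$-measurable coefficients bounded by $[f]_{\rm Lip}$, the control of the random factor $M_k=1+\Delta\mathcal B_k+\sqrt{\Delta}\,\mathcal C_k\ve_{k+1}$ by conditional H\"older plus Lemma \ref{lemme1}, the Young-type inequality with $\varepsilon=\Delta^{1/p}$ producing $\kappa_1,\kappa_2$, and the backward induction that keeps the max structure before the discrete Gronwall step. The genuine gap is in your exponent choice for $p\in(1,2)$. If you measure $M_k$ in $L^{p+1}$ in the conditional H\"older inequality, the factor $\delta Y_{k+1}=\bar Y_{k+1}-\widehat Y_{k+1}$ necessarily comes out in $L^{(p+1)/p}$, and after raising to the power $p$ your Jensen step requires $(p+1)/p\le p$, i.e. $p\ge(1+\sqrt 5)/2$. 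For $p\in\bigl(1,(1+\sqrt5)/2\bigr)$ the needed inequality $\bigl(\E_k|\delta Y_{k+1}|^{(p+1)/p}\bigr)^{p^2/(p+1)}\le \E_k|\delta Y_{k+1}|^p$ goes in the wrong direction, so your claimed estimate $|\E_k(\delta Y_{k+1}M_k)|^p\le e^{p\kappa\Delta}\,\E_k|\delta Y_{k+1}|^p$ does not follow and the $L^p$ recursion does not close on that range. The dichotomy you borrow from Theorem \ref{erreurXQR} does not transfer: there the exponent $s=p+1$ enters through a multiplicative splitting $|\cdot|^p=|\cdot|^s\,|\cdot|^{p-s}$ of a single quantity, which has no analogue when the product involves the two distinct factors $\delta Y_{k+1}$ and $M_k$.

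The paper needs no dichotomy at all here: it fixes conjugate exponents $r\in(1,2\wedge p)$ and $s=r/(r-1)>2$, applies H\"older so that $M_k$ is measured in $L^s$ (where Lemma \ref{lemme1} applies, since $s\ge 2$) while $\delta Y_{k+1}$ is measured in $L^r$ with $r<p$, and then Jensen with exponent $p/r>1$ upgrades $(\E_k|\delta Y_{k+1}|^r)^{p/r}$ to $\E_k|\delta Y_{k+1}|^p$, uniformly for every $p\in(1,2+d)$; this is also where the $s$ appearing in the constant $\kappa$ of the statement comes from. Your choice $r=p$ for $p\ge 2$ does work (the conjugate exponent $p/(p-1)\le 2\le p$ keeps Jensen valid), though it yields $\kappa$ evaluated at exponent $p$ rather than at the $s$ of the statement; the repair that is actually needed is for small $p$, where you should adopt the paper's non-$p$-conjugate pairing $(s,r)$ described above.
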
 
\begin{rmq}
	The norms $\|\bar{X}_l-\widehat{X}_l\|_p$ are recursive quantization errors established in Theorems $\ref{erreurXQR}$ and $\ref{hybridrecursive}$ for $p \in (1,2+d)$. We recall that, for every $l \in \{1, \ldots,n\}$, one has $\|\bar{X}_l-\widehat{X}_l\|_p= \mathcal{O}(N_l^{-\frac1d})$ where $N_l$ is the size of the quantization grid corresponding to $\widehat{X}_l$. 
\end{rmq}
\begin{proof}
	For every $k \in \{1,\ldots,n\}$, we use the inequality $|\max(a,b)-\max(a',b')| \leq \max(|a-a'|,|b-b'|)$ and have 
	\begin{align*}
	|\bar{Y}_k-\widehat{Y}_k| \leq \max \left(|h_k(\bar{X}_k)-h_k(\widehat{X}_k)| ,\Big| \E_k\bar{Y}_{k+1}-\E_k\widehat{Y}_{k+1}+\Delta \big(\mathcal{E}_k(\bar{X}_k,\E_k\bar{Y}_{k+1}, \bar{\xi}_k)-\mathcal{E}_k(\widehat{X}_k,\E_k\widehat{Y}_{k+1}, \widehat{\xi}_k) \big)\Big| \right)
	\end{align*}
	We denote $\beta_k=\E_k (\bar{Y}_{k+1}-\widehat{Y}_{k+1})+\Delta \left(\mathcal{E}_k\big(\bar{X}_k,\E_k\bar{Y}_{k+1}, \bar{\xi}_k\big)-\mathcal{E}_k\big(\widehat{X}_k,\E_k\widehat{Y}_{k+1}, \widehat{\xi}_k\big) \right)$ and we have 
	\begin{align*}
	\beta_k= \E_k (\bar{Y}_{k+1}-\widehat{Y}_{k+1})+\Delta\Big(\widehat{A}_k (\bar{X}_k-\widehat{X}_k)+\widehat{B}_k\E_k(\bar{Y}_{k+1}-\widehat{Y}_{k+1})+\frac{\widehat{C}_k}{\sqrt{\Delta}}\E_k\big((\bar{Y}_{k+1}-\widehat{Y}_{k+1})\ve_{k+1}\big) \Big)
	\end{align*}
	where $$\widehat{A}_k=\frac{\mathcal{E}_k\big(\bar{X}_k,\E_k\bar{Y}_{k+1}, \bar{\xi}_k\big)-\mathcal{E}_k\big(\widehat{X}_k,\E_k\bar{Y}_{k+1}, \bar{\xi}_k\big)}{\bar{X}_k-\widehat{X}_k}\mathds{1}_{\bar{X}_k \neq \widehat{X}_k},$$
	$$ \widehat{B}_k=\frac{\mathcal{E}_k\big(\widehat{X}_k,\E_k\bar{Y}_{k+1}, \bar{\xi}_k\big)-\mathcal{E}_k\big(\widehat{X}_k,\E_k\widehat{Y}_{k+1}, \bar{\xi}_k\big)}{\E_k(\bar{Y}_{k+1}-\widehat{Y}_{k+1})}\mathds{1}_{\E_k\bar{Y}_{k+1}\neq \E\widehat{Y}_{k+1}},$$
	$$ \widehat{C}_k=\frac{\mathcal{E}_k\big(\widehat{X}_k,\E_k\widehat{Y}_{k+1}, \bar{\xi}_k\big)-\mathcal{E}_k\big(\widehat{X}_k,\E_k\widehat{Y}_{k+1}, \widehat{\xi}_k\big)}{\E_k\big((\bar{Y}_{k+1}-\widehat{Y}_{k+1})\ve_{k+1}\big)} \mathds{1}_{\bar{\xi}_k \neq \widehat{\xi}_k}.$$
	It is clear that $\max\Big(|\widehat{A}_k|,\,|\widehat{B}_k|,\,|\widehat{C}_k| \Big) \leq [f]_{\rm Lip}$, so one has 
	$$|\beta_k| \leq \Delta [f]_{\rm Lip} |\bar{X}_k-\widehat{X}_k|+\E_k \Big| (1+\Delta \widehat{B}_k+\sqrt{\Delta}
	\widehat{C}_{k}\ve_{k+1})(\bar{Y}_{k+1}-\widehat{Y}_{k+1}) \Big|.$$
	At this stage, we consider two conjugate exponents $r \in (1, 2 \, \wedge \, p)$ and $s =\frac{r}{r-1} >2$ and we apply conditional H\"older's inequality 
	\begin{align*}
	\E_k \Big| (1+\Delta \widehat{B}_k+\sqrt{\Delta}\widehat{C}_k \ve_{k+1})(\bar{Y}_{k+1}-\widehat{Y}_{k+1}) \Big|\leq & \left(\E_k |1+\Delta \widehat{B}_k+\sqrt{\Delta}\widehat{C}_k \ve_{k+1}|^s\right)^{\frac 1s} \left(\E_k |\bar{Y}_{k+1}-\widehat{Y}_{k+1}|^r \right)^{\frac 1r}.
	\end{align*}
	Since $s>2$, one can apply Lemma $\ref{lemme1}$ with $a=1+\Delta \widehat{B}_k$ and $A=\widehat{C}_k$ and obtains
	\begin{align*}
	\E_k |1+\Delta \widehat{B}_k+\sqrt{\Delta}\widehat{C}_k\ve_{k+1}|^s & \leq (1+\Delta [f]_{\rm Lip})^s (1+c_s^{(1)} \Delta)+\Delta [f]_{\rm Lip}^s c^{(3)}_{s,\Delta_{\max},\ve_{k+1}}\\
	& \leq e^{s\Delta [f]_{\rm Lip}+\Delta c_s^{(1)}}+\Delta [f]_{\rm Lip}^s c^{(3)}_{s,\Delta_{\max},\ve_{k+1}}\\
	& \leq e^{\Delta(c_s^{(1)}+s[f]_{\rm Lip})}(1+\Delta [f]_{\rm Lip}^s c^{(3)}_{s,\Delta_{\max},\ve_{k+1}}e^{-\Delta(c_s^{(1)}+s[f]_{\rm Lip})})\\
	& \leq e^{\Delta (c_s^{(1)}+s[f]_{\rm Lip}+[f]_{\rm Lip}^s c^{(3)}_{s,\Delta_{\max},\ve_{k+1}})}
	\end{align*}
	where $c_s^{(1)} $ and $c^{(3)}_{s,\Delta_{\max},\ve_{k+1}}$ are real constants defined in Lemmas $\ref{lemme1}$ and $\ref{Xtildequad}$. Therefore,
	$$|\beta_k| \leq \Delta [f]_{\rm Lip} |\bar{X}_k-\widehat{X}_k|+e^{\kappa \Delta}\left(\E_k |\bar{Y}_{k+1}-\widehat{Y}_{k+1}|^r \right)^{\frac 1r},$$
	where $\kappa= \frac{c_s^{(1)}+s[f]_{\rm Lip}+[f]_{\rm Lip}^s c^{(3)}_{s,\Delta_{\max},\ve_{k+1}}}{s}$, and
	$$|\bar{Y}_k -\widehat{Y}_k|^p \leq \max \Big([h]_{\rm Lip}^p|\bar{X}_k-\widehat{X}_k|^p\,, \, |\beta_k|^p\Big).$$
	Now, using  inequality $(\ref{a+bexpor})$ yields
	$$|\beta_k|^p\leq e^{p\kappa \Delta}\left(\E_k |\bar{Y}_{k+1}-\widehat{Y}_{k+1}|^r \right)^{\frac pr} \big(1+(p-1)2^{p-2}\varepsilon^p\big) +2^{p-2}[f]_{\rm Lip}^p |\bar{X}_k-\widehat{X}_k|^p \Delta^p \Big(p+\frac{1}{\varepsilon^{p(p-1)}} \Big).$$
	We choose $\varepsilon =\Delta^{\frac1p}$ so that  $\Delta^p \Big(p+\frac{1}{\varepsilon^{p(p-1)}}\Big)= \Delta(1+p\Delta^{p-1})$ and hence
	$$|\beta_k|^p\leq e^{\kappa_1 \Delta}\left(\E_k |\bar{Y}_{k+1}-\widehat{Y}_{k+1}|^r \right)^{\frac pr}+\Delta \kappa_2 |\bar{X}_k-\widehat{X}_k|^p$$ 
	where  $\kappa_1=p\kappa+(p-1)2^{p-2}$ and $\kappa_2=2^{p-2}[f]_{\rm Lip}^p(1+p\Delta^{p-1})$. Moreover, by our choice of $r$, we have that $\frac pr>1$ so we apply Jensen's inequality and obtain 
	$$|\beta_k|^p\leq e^{\kappa_1 \Delta}\E_k |\bar{Y}_{k+1}-\widehat{Y}_{k+1}|^p +\Delta \kappa_2 |\bar{X}_k-\widehat{X}_k|^p.$$
	Hence, having in mind that $\bar{X}_k, \widehat{X}_k, \bar{Y}_k$and $\widehat{Y}_k$ are all $\mathcal{F}_{t_k}$-measurable processes, one has 
	\begin{equation}
	\label{erravrecurr}
	\E_k |\bar{Y}_k-\widehat{Y}_k|^p\leq \max \Big([h]_{\rm Lip}^p\E_k|\bar{X}_k-\widehat{X}_k|^p\,, \, e^{\kappa_1 \Delta}\E_k |\bar{Y}_{k+1}-\widehat{Y}_{k+1}|^p +\Delta \kappa_2 \E_k|\bar{X}_k-\widehat{X}_k|^p\Big).
	\end{equation}
	At this stage, we aim to prove that $\E_k|\bar{Y}_k-\widehat{Y}_k|^p$ satisfies the following backward induction
	\begin{equation}
	\label{recurrence}
	\E_k|\bar{Y}_k-\widehat{Y}_k|^p \leq e^{(n-k) \kappa_1 \Delta}\big([g]_{\rm Lip}^p\vee [h]_{\rm Lip}^p\big) \E_k \max_{k \leq i \leq n} |\bar{X}_i-\widehat{X}_i|^p +\Delta \kappa_2\sum_{i=k}^{n-1} e^{(i-k)\kappa_1 \Delta} \E_k |\bar{X}_i-\widehat{X}_i|^p.	
	\end{equation}
	First, it is clear that $\E_n |\bar{Y}_n-\widehat{Y}_n|^p \leq [g]_{\rm Lip}^p \E_n|\bar{X}_n-\widehat{X}_n|^p$ so the induction is satisfied for $k=n$. We assume that $(\ref{recurrence})$ is true for $k+1$ i.e.
	\begin{align}
	\label{k+1}
	\E_{k+1}|\bar{Y}_{k+1}-\widehat{Y}_{k+1}|^p  &\leq e^{(n-k-1) \kappa_1 \Delta}([g]_{\rm Lip}^p\vee [h]_{\rm Lip}^p) \E_{k+1} \max_{k+1 \leq i\leq n} |\bar{X}_i-\widehat{X}_i|^p \nonumber \\
	&+\Delta \kappa_2\sum_{i=k+1}^{n-1} e^{(i-k-1)\kappa_1 \Delta} \E_{k+1} |\bar{X}_i-\widehat{X}_i|^p
	\end{align}
	and show it for $k$. In fact, since $\E_k\E_{k+1}(\cdot)=\E_k(\cdot)$, one has, by merging $(\ref{erravrecurr})$ with $(\ref{k+1})$, that
	\begin{align*}
	\E_k|\bar{Y}_k-\widehat{Y}_k|^p  \leq & \max \Big([h]_{\rm Lip}^p\E_k|\bar{X}_k-\widehat{X}_k|^p\,, \, e^{\kappa_1 \Delta}\E_k \E_{k+1}|\bar{Y}_{k+1}-\widehat{Y}_{k+1}|^p +\Delta \kappa_2 \E_k|\bar{X}_k-\widehat{X}_k|^p\Big)\\
	 \leq & \max \Big([h]_{\rm Lip}^p\E_k|\bar{X}_k-\widehat{X}_k|^p\,, \, \Delta \kappa_2 \E_k |\bar{X}_k-\widehat{X}_k|^p +\Delta \kappa_2\sum_{i=k+1}^{n-1} e^{(i-k)\kappa_1 \Delta} \E_k \E_{k+1} |\bar{X}_i-\widehat{X}_i|^p\\
	 &+e^{(n-k) \kappa_1 \Delta}([g]_{\rm Lip}^p\vee [h]_{\rm Lip}^p) \E_k\E_{k+1} \max_{k+1 \leq i \leq n} |\bar{X}_i-\widehat{X}_i|^p  \Big)\\
	\leq & \max \Big([h]_{\rm Lip}^p\E_k|\bar{X}_k-\widehat{X}_k|^p\,, e^{(n-k) \kappa_1 \Delta}([g]_{\rm Lip}^p\vee [h]_{\rm Lip}^p) \E_k \max_{k \leq i \leq n} |\bar{X}_i-\widehat{X}_i|^p \\
	&+\Delta \kappa_2\sum_{i=k}^{n-1} e^{(i-k)\kappa_1 \Delta} \E_k |\bar{X}_i-\widehat{X}_i|^p \Big)
	\end{align*}
	since $\max_{ k+1 \leq i \leq n } \alpha_i \leq \max_{k \leq i \leq n} \alpha_i$ for $\alpha_i>0$. Furthermore, noticing that
	\begin{align*}
	[h]_{\rm Lip}^p \E_k |\bar{X}_k-\widehat{X}_k|^p & \leq \big([g]_{\rm Lip}^p\vee [h]_{\rm Lip}^p\big) \E_k \max_{k \leq i \leq n} |\bar{X}_i-\widehat{X}_i|^p \leq e^{(n-k) \kappa_1 \Delta}\big([g]_{\rm Lip}^p\vee [h]_{\rm Lip}^p\big) \E_k \max_{k \leq i \leq n} |\bar{X}_i-\widehat{X}_i|^p
	\end{align*}
	because $e^{(n-k) \kappa_1 \Delta }>1$, one concludes the induction $(\ref{recurrence})$. This yields
	\begin{equation}
	\label{lastineq}
	\E_k|\bar{Y}_k-\widehat{Y}_k|^p \leq e^{(T-t_k) \kappa_1}\big([g]_{\rm Lip}^p\vee [h]_{\rm Lip}^p\big) \E_k \max_{k \leq i \leq n} |\bar{X}_i-\widehat{X}_i|^p +\Delta \kappa_2 \E_k \max_{k \leq i \leq n} |\bar{X}_i-\widehat{X}_i|^p \sum_{i=k}^{n-1} e^{(i-k)\kappa_1 \Delta}.
	\end{equation}
	Finally, since $e^x-1 \geq x$ for $x \ge 0$, one has
	$$\sum_{i=k}^{n-1} e^{(i-k)\kappa_1 \Delta}= \frac{e^{(n-k)\kappa_1 \Delta}-1}{e^{\kappa_1 \Delta}-1} \leq \frac{e^{(T-t_k)\kappa_1 }-1}{\Delta \kappa_1}$$
	and  then deduces the result by taking the expectation in ($\ref{lastineq}$) . 
	\hfill $\square$
\end{proof}
\section{Algorithmics}
\label{algorithmics}
Our aim is to write $(\widehat{Y}_k,\widehat{\zeta}_k)$, which approximates the solution of the RBSDE $(\ref{BSDE})$, in a form that allows us to compute their values. For this, we first note that $(\bar{X}_k)_{0 \leq k \leq n}$ and $(\widehat{X}_k)_{0 \leq k \leq n}$ are both $\mathcal{F}_{t_k}$-Markov chains where $\mathcal{F}_{t_k}=\sigma(W_s, s \leq t_k, \mathcal{N}_{\P})$, for every $k \in \{0,\ldots,n\}$, with respective transitions $P_k(x,dy)=\P(\bar{X}_{k+1}\in dy|\bar{X}_k=x)$ and $\widehat{P}_k(x,dy)=\P(\widehat{X}_{k+1}\in dy|\widehat{X}_k=x)$. The main advantage of recursive quantization is that it preserves the Markovian property of $(\widehat{X}_k)_{0 \leq k \leq n}$ with respect to the filtration $(\mathcal{F}_{t_k})_{0\leq k \leq n}=\big(\sigma(W_s, s \leq t_k, \mathcal{N}_{\P})\big)_{0 \leq k \le n}$. Note that, for optimal quantization, the trick was to force the Markov property by conditioning with respect to the filtration $\widehat{\mathcal{F}}_{t_k}=\sigma(\widehat{X}_0,\ldots,\widehat{X}_k)$ instead of $\mathcal{F}_{t_k}$ in $(\ref{Zetachapk})$-$(\ref{Ychapk})$. The price to pay is that the approximations $\|\bar X_k-\widehat X_k\|_p$, for every $k \in \{1, \ldots,n\}$, are less accurate (but not in a drastic way). This point is discussed in details in \cite{PaSa18}. \\

\noindent For every bounded or non-negative Borel function $f$, one has $\ds P_kf(x)=\int_{\R^d} f(y)P_k(x,dy)$, so that
$$\E \big(f(\bar X_{k+1})\,|\, {\cal F}_{t_k}\big) = P_{k}f(\bar X_k)
   \qquad \mbox{ and } \qquad \E \big(f(\widehat X_{k+1})\,|\, {\cal F}_{t_k}\big) = \widehat P_{k}f(\widehat X_k)
.$$
Moreover, we introduce 
$$Q_{k}f(\bar{X}_k)=\frac{1}{\sqrt{\Delta}}\E \big(f(\bar{X}_{k+1}) \ve_{k+1}\, | \, {\cal F}_{t_k}\big)  \qquad \mbox{ and } \qquad \widehat{Q}_{k}f(\widehat{X}_k)=\frac{1}{\sqrt{\Delta}}\E \big(f(\widehat{X}_{k+1}) \ve_{k+1}\, | \, {\cal F}_{t_k} \big)$$
where $(\ve_k)_{0 \leq k \leq n}$ are i.i.d. with Normal distribution $\mathcal{N}(0,I_q).$\\

Similarly to the functions $(\bar{y}_k)_{0 \leq k \leq n}$ defined by $(\ref{ybark})$, one shows that there exists Borel functions $(\widehat{y}_k)_{0 \leq k \leq n}$ such that $\widehat{Y}_k=\widehat{y}_k(\widehat{X}_k)$ for every $k \in \{0,\ldots,n\}$. They are defined recursively by the following Backward Dynamic Programming Principle (BDPP)
\begin{equation}
\label{ykchapeau}
\left\{
\begin{array}{rl}
\widehat{y}_n &=h_n\\
\widehat{y}_k&=\max \Big(h_k,\,\widehat{P}_{k}\widehat{y}_{k+1}+\Delta \mathcal{E}_k\big(.,\widehat{P}_{k}\widehat{y}_{k+1},\widehat{Q}_{k}\widehat{y}_{k+1}\big)\Big)\, , \quad k =0,\ldots,n-1,
\end{array}
\right.
\end{equation}
This BDPP can also be written in distribution, one can write $(\bar{y}_k)_{0 \leq k \leq n}$ as
\begin{equation*}
\left\{
\begin{array}{rl}
\bar{y}_n &=h_n\\
\bar{y}_k&=\max \Big(h_k,\,P_{k}\bar{y}_{k+1}+\Delta \mathcal{E}_k\big(.,P_{k}\bar{y}_{k+1},Q_{k}\bar{y}_{k+1}\big)\Big)\, , \quad k =0,\ldots,n-1,\\
\end{array}
\right.
\end{equation*}
The fact that $\bar Y_k =\bar y_k(\bar X_k)$ and $\widehat Y_k=\widehat y_k(\widehat X_k)$ can easily be checked by a backward induction relying on $(\ref{YbarT})$-$(\ref{Ytildek})$-$(\ref{Ybark})$ and $(\ref{YchapT})$-$(\ref{Ychapk})$ respectively.
Furthermore, there exists functions $\bar z_k$ and $\widehat z_k$ such that $\bar{\zeta}_k=\bar{z}_k(\bar{X}_k)$ and $\widehat{\zeta}_k=\widehat{z}_k(\widehat{X}_k)$, defined by $$\bar{z}_k=Q_{k}\bar y_{k+1} \qquad \mbox{ and } \qquad  \widehat{z}_k=\widehat{Q}_{k}\widehat{y}_{k+1}.$$

In order to compute $\widehat{Y}_k$ and $\widehat{\zeta}_k$, we first need to compute the optimal (or at least optimized) recursive quantization $\widehat{X}_k$ of $\bar{X}_k$ for every $k \in \{0,\ldots,n\}$ and the corresponding transition weights. We will consider the quadratic case $p=2$ for all numerical aspects. 
\subsection{Computation of the recursive quantizers}
\label{constructionXk}
As defined previously, the recursive quantization of $(\bar{X}_k)_{0 \leq k \leq n}$ is realized via $(\ref{RBSDE:quantifrecursive})$ (or $(\ref{RBSDE:quantifrecursivehybride})$). In a quadratic framework, the computation of the optimal quantization grids $\Gamma_k$ of $\widetilde{X}_k$ of size $N_k$, at each time step $t_k$, is achieved by algorithms such as CLVQ (Competitive Learning Vector Quantization), Lloyd's algorithm or Newton-Raphson. These algorithms are presented in details in \cite{PaPrin03} for example. Here, we expose a variant of Lloyd's algorithm for recursive quantization.\\

For $k \in \{1, \ldots, n\}$, computing an optimal quantizer $\widehat{X}_k^{\Gamma_k}$ of $\widetilde{X}_k$ consists in computing the grid $\Gamma_k$ solution to the minimization problem
$$\Gamma_k \in \mbox{argmin}\Big\{\|\widehat{X}_k^{\Gamma} - \widetilde{X}_k\|_2^2, \, \Gamma \subset \R^d, \, \mbox{card}(\Gamma) \leq N_k\Big\}.$$
The construction of these grids is performed recursively at each step $t_k$ in a forward way. It is somehow an {\em embedded} optimization. We suppose that, at time $t_k$, the grid $\Gamma_k=\{x_1^k, \ldots, x_{N_k}^k\}$ is already computed (optimized) and that $\widetilde{X}_k$ has been quantized by $\widehat{X}_k=\sum_{i=1}^{N_k} x_i^k \mathds{1}_{C_i(\Gamma_k)}$ where $(C_i(\Gamma_k))_{1 \leq i \leq N_k}$ is  the Vorono\"i diagram associated to $\widehat{X}_k$ and defined by $(\ref{Voronoicells})$. Then, at time step $t_{k+1}$, we build the grid $\Gamma_{k+1}$ that minimizes the quadratic distortion ${G}_{k+1}^2(\Gamma)$ defined by $(\ref{distfunction})$ and written as a function of the grid $\Gamma_k=\{x_1^k,\ldots,x_{N_k}^k\}$ computed at the previous step. So, if $\Gamma_{k+1}=\{x_1^{k+1}, \ldots, x_{N_{k+1}}^{k+1}\}$, then one has, for every $j \in \{1, \ldots,N_{k+1}\}$,
\begin{align}
\label{RBSDE:xj}
x_j^{k+1}& = \E\Big( \widetilde{X}_{k+1}\,|\,\widehat{X}_{k+1} \in C_j(\Gamma_{k+1}) \Big)   = \frac{\sum_{i=1}^{N_k}p_i^k \E\Big(\mathcal{E}_k(x_i^k,\ve_{k+1}) \mathds{1}_{\{\mathcal{E}_k(x_i^k,\ve_{k+1})\in C_j(\Gamma_{k+1})\}} \Big)}{p_j^{k+1}}.
\end{align}
Recalling that $\mathcal{E}_k(x,\ve_{k+1})=x+\Delta b_k(x)+\sqrt{\Delta}\sigma_k(x) \ve_{k+1}$, it is important to notice that, for every $k \in \{1, \ldots,n\}$ and $i \in \{1, \ldots, N_k\}$, $\mathcal{E}_k(x_i^k,\ve_{k+1}) \sim \mathcal{N}(m_i^k, \Sigma_i^k)$ where $m_i^k=x_i^k+\Delta b_k(x_i^k)$ and $\Sigma_i^k= \sqrt{\Delta} \sigma_k(x_i^k)$. \\

\noindent We are interested in more than just computing the distribution of $(\widehat X_k)_{0 \leq k \leq n}$, the computation of the transition matrices $P_k=(p^k_{ij})_{_{ij}}$ is even more fundamental among the companion parameters in view of our applications.
For every $k \in \{1, \ldots, n\}$ and $i,j \in \{1, \ldots, N_k\}$, the transition probability $p_{ij}^k$ from $x_i^k$ to $x_j^{k+1}$ is given by
\begin{equation}
\label{RBSDE:poidsij}
p_{ij}^k  = \P\left( \widetilde{X}_{k+1} \in C_j(\Gamma_{k+1})\, |\, \widetilde{X}_{k} \in C_i(\Gamma_{k}) \right) = \P\left( \mathcal{E}_k(x_i^k,\ve_{k+1})\in C_j(\Gamma_{k+1})\right).
\end{equation} 
This identity allows the computation of the weights $p_j^{k+1}$ of the Vorono\"i cells $C_j(\Gamma_{k+1})$, for every $j \in \{1, \ldots, N_{k+1}\}$, via the classical (discrete time) forward Kolmogorov equation. They are given by
\begin{align}
\label{RBSDE:poidsj}
p_j^{k+1} & = \P\big(\widetilde{X}^{k+1} \in C_j(\Gamma_{k+1})\big) = \sum_{i=1}^{N_k} p_i^k  \P\Big( \mathcal{E}_k(x_i^k,\ve_{k+1})\in C_j(\Gamma_{k+1})\Big).
\end{align}
\paragraph{One-dimensional setting $q=d=1$:} 
The transition weights $p_{ij}^k$ can be computed in a direct way as follows:  for every $i \in \{1,\ldots,N_k\}$ and $j \in \{1,\ldots,N_{k+1}\}$  
\begin{align*}
p_{ij}^k 
& = \P \Big(\widetilde X_{k+1} \leq x_{j+\frac12}^{{k+1}} \, | \, \widehat X_k =x_i^k \Big)-\P \Big(\widetilde X_{k+1} \leq x_{j-\frac12}^{{k+1}} \, | \, \widehat X_k =x_i^k \Big) = \Phi_0\big( x_{i,j_+}^{k+1}\big)-\Phi_0\big( x_{i,j_-}^{k+1}\big)
\end{align*}  
where $\Phi_0$ is the cumulative distribution function of the standard Normal distribution $\mathcal{N}(0,1)$ and 
$$x_{i,j_+}^{k+1}=\frac{x_{j+\frac12}^{k+1}-x_i^k-\Delta b_k(x_i^k)}{\sqrt{\Delta}\sigma_k(x_i^k)} \quad \mbox{ and } \quad x_{i,j_-}^{k+1}=\frac{x_{j-\frac12}^{k+1}-x_i^k-\Delta b_k(x_i^k)}{\sqrt{\Delta}\sigma_k(x_i^k)}$$
with $x_{j+\frac12}^{k+1}=\frac{x_j^{k+1}+x_{j+1}^{k+1}}{2}$, $x_{\frac12}^{k+1}=-\infty$ and $x_{N_{k+1}-\frac12}^{k+1}=+\infty$.
\paragraph{General setting:}
In order to approximate the transition probabilities and the weights of the Vorono\"i cells when $d>1$, one may proceed with Monte Carlo simulations or rely on Markovian and componentwise product quantization (see $\cite{FiPaSa19}$). A very interesting alternative is the hybrid recursive quantization, studied in Section \ref{hybridQR}, where we replaced the white Gaussian noise by its optimal quantization sequences. The principle on which we rely to design the hybrid recursive quantizers is the same as the one for the standard recursive quantization. The only difference is with the computation of the expectations and probabilities in $(\ref{RBSDE:xj})$,$(\ref{RBSDE:poidsij})$ and $(\ref{RBSDE:poidsj})$. Instead of resorting to large and slow Monte Carlo simulations, we consider sequences of optimal quantizers $(\hat \ve_l^k)_{1\leq l\leq N_{\ve}}$ of size $N_{\ve}$ of the Gaussian distribution $\mathcal{N}(0,I_d)$, available on the quantization website 
\href{http://www.quantize.maths-fi.com}{www.quantize.maths-fi.com}, 
and compute the sequence and its companion parameters based on the following formulas 
\begin{equation}
\label{espQRH}
\E \Big( \mathcal{E}_k(x_i^k, \ve_{k})\mathds{1}_{\mathcal{E}_k(x_i^k, \ve_{k}) \in C_j(\Gamma_{k+1})}\Big)=\sum_{l=1}^{N_{\ve}} p_{\ve_l}^k \mathcal{E}_k(x_i^k, \hat \ve_l^k ) \mathds{1}_{\mathcal{E}_k(x_i^k, \hat \ve_l^k )  \in C_j(\Gamma_{k+1})}
\end{equation}
and
\begin{equation}
\label{probaQRH}
P\big(\mathcal{E}_k(x_i^k, \ve_{k}) \in C_j(\Gamma_{k+1}) \big)= \sum_{l=1}^{N_{\ve}} p_{\ve_l}^k \mathds{1}_{\mathcal{E}_k(x_i^k, \hat \ve_l^k )  \in C_j(\Gamma_{k+1})}
\end{equation}
where $p_{\ve_l}^k$ is the weight of the Vorono\"i cell of centroid $\hat \ve_{l}^k$, also available on the quantization website.
\subsection{Computation of the quantized solution of the RBSDE}
Having already computed the recursive quantization $(\widehat{X}_k)_{0 \leq k \leq n}$ of $(\bar{X}_k)_{0 \leq k \leq n}$ as described in the previous section $\ref{constructionXk}$, as well as the corresponding companion parameters (the weights $(p_i^k)_{1 \leq i \leq N_k}$ of Vorono\"i cells and the transition weights $(p_{ij}^k)_{1 \leq i\leq N_k, 1 \leq j \leq N_{k+1}}$), we proceed with the computation of $(\widehat{Y}_k)_{0 \leq k \leq n}$ and rely on the BDPP $(\ref{ykchapeau})$ allowing us to compute $\widehat{Y}_k=\widehat y_k(\widehat X_k)$ as a function of the quantizer $\Gamma_k=\{x_1^k,\ldots,x_{N_k}^k\}$. For every $k \in \{0,\ldots,n-1\}$ and $i \in \{1,\ldots,N_k\}$, we denote 
$$\widehat{\alpha}_k(x_i^k)=\sum_{j=1}^{N_{k+1}}\widehat{y}_{k+1}(x_j^{k+1}) p_{ij}^k \qquad \mbox{ and } \qquad \widehat{\beta}_k(x_i^k)=\frac{1}{\Delta} \sum_{j=1}^{N_{k+1}}\widehat{y}_{k+1}(x_j^{k+1}) \pi_{ij}^k$$
where
\begin{equation}
\label{piijk}
\pi_{ij}^k =\frac{\sqrt{\Delta}}{p_i^k}\, \E\Big(\ve_{k+1} \mathds{1}_{\{\widehat{X}_{k+1}=x_j^{k+1},\widehat{X}_k=x_i^k\}} \Big) =\sqrt{\Delta}\E\Big(\ve_{k+1} \mathds{1}_{\mathcal{E}_k(x_i^k,\ve_{k+1}) \in C_j(\Gamma_{k+1})} \Big)
\end{equation}
and $\mathcal{E}_k(x,\ve_{k+1})=x+\Delta b_k(x)+\sqrt{\Delta}\sigma_k(x) \ve_{k+1}.$ 
Note that the quantities $(\pi_{ij}^k)_{_{1\leq i,j \leq N_k}}$ are computed online at the same time as the transition weight matrices $(p_{ij}^k)_{_{1 \leq i,j \leq N_k}}$ for every $k \in \{0,\ldots,n-1\}$, so that they can be stored and used instantly in the computations of the solution of the RBSDE. \\

Therefore, the solution $Y_0$ of the RBSDE is approximated by the value $\widehat y_0$ at time $t_0$ of the following recursive quantized scheme 
\begin{equation}
\label{BDPP}
\left\{
\begin{array}{rl}
\widehat{y}_n(x_i^n) &=h_n(x_i^n)\, , \qquad i =1,\ldots,N_n,\\
\widehat{y}_k(x_i^k)&=\max \Big( h_k(x_i^k), \, \widehat{\alpha}_{k}(x_i^k)+\Delta \mathcal{E}_k\big(x_i^k,\widehat{\alpha}_{k}(x_i^k),\widehat{\beta}_{k}(x_i^k)\big) \Big)\, , \qquad i =1,\ldots,N_k,
\end{array}
\right.
\end{equation}
And, the function $\hat z_k$ used to approximate $\hat \zeta_k$ is computed via the following sum
$$\hat z_k (x_i^k)= \frac{1}{\Delta}\sum_{j=1}^{N_{k+1}} \hat y_{k+1}(x_j^{k+1})\pi_{ij}^k. $$
\begin{rmq}
	One should mention that, once the recursive quantization grids and the corresponding companion parameters are computed, the computation of the solution of the RBSDE is almost instantaneous, we can even say that its computational cost is negligible. 
\end{rmq}
\section{Numerical examples}
\label{examples}
We carry out some numerical experiments to illustrate the rate of convergence of the recursive quantization-based discretized scheme and to compare its performances with other schemes based on optimal quantization, greedy quantization and greedy recursive quantization. We start by explaining how to obtain the quantizers and their companions parameters (Vorono\"i and transition weights) by optimal, greedy and recursive greedy quantization. Concerning the time discretization, we  consider the Euler scheme of the forward diffusion $(X_t)_{0 \leq t \leq T}$ defined by $(\ref{RBSDE:SDEeuler})$. 
\subsection{Various quantization methods}
\subsubsection{Quanization tree with optimal marginal quantization}
\label{QO}
In this section, we aim to build optimal quantizers $\widehat{X}_k^{\Gamma_k}$ of $\bar X_k$ for every $k \in \{0,\ldots,n\}$. At time $t_0$, we start with $\widehat X_0=X_0=x_0 \in \R^d$. Then, at each time step $t_k$, we rely on a sequence of optimal quantizers $(z^k_i)_{1 \leq i \leq N_k}$ of size $N_k$ of the Normal distribution $\mathcal{N}(0,I_d)$ and we compute the quantizer $\Gamma_k=(x_1^k,\ldots,x_{N_k}^k)$ via  
$$x_i^k=x_0+ t_k b(x_0) +\sqrt{t_k} \sigma(x_0) z_i^k\; , \qquad i \in \{1, \ldots,N_k\}.$$
In particular, if $(\bar X_k)_{0, \leq k \leq n}$ evolves following a Black-Scholes model with interest rate $r$ and volatility $\sigma$, then the quantizers are computed as follows
$$x_i^k=x_0\exp\Big((r-\tfrac{\sigma^2}{2})t_k+\sigma \sqrt{t_k}z_i^k\Big).$$
The weights of the Vorono\"i cells are obtained by the forward Kolmogorov equation $(\ref{RBSDE:poidsj})$. In the one-dimensional case, they are easily computed relying on the c.d.f. of the Gaussian distribution.\\

The challenge in this method is the computation of the transition weights $p_{ij}^k$, which are mandatory for our cause. By optimal quantization, $(\widehat X_k)_{0 \leq k \leq n}$ is not a Markov chain so one cannot use its distribution to compute $p_{ij}^k$ like for recursive quanization. One usually compute them by Monte Carlo simulations, but, in the one-dimensional case, there exist some closed formulas. In the following, we present such closed formulas in the case of a Black-Scholes model (the case that interests us the most for our numerical examples), i.e. a case where, for an the interest rate $r$ and a volatility $\sigma$, the process is given by
$$\widehat X_k=\widehat X_0\exp\Big((r-\frac{\sigma^2}{2})t_k+\sigma \sqrt{t_k} \ve_k \Big)$$
where $(\ve_k)_{1 \leq k \leq n}$ is an i.i.d. sequence of random variables with distribution $\mathcal{N}(0,1)$.
\paragraph{Exact computation of the transition weights}
\label{transexact}
Assume that the quantizers $\Gamma_k=(x_i^k)_{1 \leq i \leq N_k}$ of size $N_k$ of $\bar X_k$ are already computed for every $k \in \{1,\ldots,n\}$ and that the sizes of the grids $N_k$, $k=1,\ldots,n$, are all equal to $N\in \N$. Note that this hypothesis is not optimal but turns out to be  optimal in terms of complexity for a given budget $N_1+\cdots+N_n$. It is not sharp in terms of error estimates (up to a multiplicative constant) but remains a good compromise which is convenient in practice for the implementation. The goal is to compute the transition weights
$$p_{ij}^k=\mathbb{P}\Big( \widehat{X}_{{k+1}} =x_{j}^{k+1} \, |\, \widehat{X}_{k}=x_i^k \Big) =\frac{\bar p_{ij}^k}{p_i^k}$$
where $$\bar p_{ij}^k=\mathbb{P}\Big( \widehat{X}_{{k+1}} =x_{j}^{k+1} ,\,  \widehat{X}_{k}=x_i^k \Big) \qquad \mbox{and} \qquad p_i^k=\mathbb{P}\big(\widehat{X}_{k}=x_i^k\big).$$
The weights $p_i^k$ are computed via the forward Kolmogorov equation, using  the transition weights $p_{ij}^k$, as follows 
$$p_j^{k+1}=\sum_{i=1}^{N_k}p_{ij}^k \, p_i^k=\sum_{i=1}^{N_k} \bar p_{ij}^k,$$
keeping in mind that the Vorono\"i weight at time $t_0$ (i.e. $k=0$) is equal to $1$ since $\widehat X_0=X_0=x_0$ is deterministic. So, our main concern is the computation of $\bar p_{ij}^k$ for every $k \in \{1,\ldots,n\}$ and $ i,j \in \{1,\ldots,N\}$.
We start by noticing that
$$\widehat X_{k+1}=\widehat X_k\Big(1+rh+\sigma \sqrt{h} \ve_k \Big)$$
where $h =\frac{T}{n}$ is the time step of the discretization scheme. Note  that highly accurate quantization grids of $\mathcal{N}(0,1)$ for regularly sampled sizes
from $N = 1$ to $1\, 000$ are available and can be downloaded from the quantization website \href{http://www.quantize.maths-fi.com}{www.quantize.maths-fi.com} 
(for non-commercial purposes). Then, considering two independent random variables $z_1$ and $z_2$ with distribution $\mathcal{N}(0,1)$, one has
\begin{align*}
\bar p_{ij}^k &= \mathbb{P}\Big(\widehat X_{k+1} \in \big[x_{j-\frac{1}{2}}^{k+1},x_{j+\frac{1}{2}}^{k+1} \big] \; , \;\widehat X_{k} \in \big[x_{i-\frac{1}{2}}^{k},x_{i+\frac{1}{2}}^{k} \big] \Big) \\
& = \mathbb{P} \left(\widehat X_k(1+rh+\sigma\sqrt{h}z_2) \in C_j(\Gamma_{k+1}), \, z_1 \in \big[\underline{x}_i^k,\, \overline{x}_i^k\big]\right) 
\end{align*}
where \begin{equation}
 \underline{x}_i^k=\frac{\ln\big( x_{i-\frac{1}{2}}^k \big)  +\big(\frac{\sigma^2}{2}-r\big)t_k  -\ln(x_0)}{\sigma \sqrt{t_k}} \quad \mbox { and } \quad  \overline{x}_i^k=\frac{\ln\big( x_{i+\frac{1}{2}}^k \big) + \big(\frac{\sigma^2}{2}-r\big)t_k  -\ln(x_0)}{\sigma \sqrt{t_k}},
 \end{equation}
Then, the independence of $z_1$ and $z_2$ yields
\begin{align}
\label{RBSDE:pijk}
\bar p_{ij}^k &= \int_{\underline{x}_i^k}^{\overline{x}_i^k} \mathbb{P} \Big( x_0(1+rh+\sigma\sqrt{h}z_2) \, {\rm exp}\Big((r-\tfrac{\sigma^2}{2})t_k+\sigma \sqrt{t_k}z\Big) \in \Big[x_{j-\frac{1}{2}}^{k+1},x_{j+\frac{1}{2}}^{k+1} \Big]  \Big)\, e^{-\frac{z^2}{2}} \frac{dz}{\sqrt{2\pi}} \nonumber \\
&= \int_{\underline{x}_i^k}^{\overline{x}_i^k} \mathbb{P} \left( z_2 \in \left[  \frac{x_{j-\frac{1}{2}}^{k+1} e^{(\frac{\sigma^2}{2}-r)t_k-\sigma\sqrt{t_k}z } -x_0-rhx_0}{\sigma x_0 \sqrt{h}} \; , \; \frac{x_{j+\frac{1}{2}}^{k+1}e^{(\frac{\sigma^2}{2}-r)t_k-\sigma\sqrt{t_k}z } -x_0-rhx_0}{\sigma x_0 \sqrt{h}} \right]\right) e^{-\frac{z^2}{2}} \frac{dz}{\sqrt{2\pi}} \nonumber \\
&= \int_{\underline{x}_i^k}^{\overline{x}_i^k} \left(\Phi_0(\overline{x}_j^{k+1})-\Phi_0(\underline{x}_j^{k+1}) \right)e^{-\frac{z^2}{2}} \frac{dz}{\sqrt{2\pi}},
\end{align}
where
 \begin{equation}
 \label{xdandxu}
 \underline{x}_j^{k+1}= \frac{x_{j-\frac{1}{2}}^{k+1} e^{(\frac{\sigma^2}{2}-r)t_k-\sigma\sqrt{t_k}z } -x_0-rhx_0}{\sigma x_0\sqrt{h}}\quad \mbox{ and} \quad \overline{x}_j^{k+1}= \frac{x_{j+\frac{1}{2}}^{k+1} e^{(\frac{\sigma^2}{2}-r)t_k-\sigma\sqrt{t_k}z } -x_0-rhx_0}{\sigma x_0\sqrt{h}}.
 \end{equation}
These integrals can be computed via Gaussian quadrature formulas, mainly Gauss-Legendre quadrature formulas for integrals on closed intervals and Gauss-Laguerre quadrature formulas for integrals on semi-closed intervals. So, if $i=1$ or $i=N$, one uses Gauss-Laguerre formulas since the Vorono\"i cells (over which we are integrating) are of the form $(-\infty,a)$ or $(a, +\infty)$ for some $a \in \R$. Otherwise, the Vorono\"i cells are closed intervals so one relies on Gauss-Legendre quadrature formula. 
Let us detail these computations.\medskip \\
$\rhd$ {\sc Integration on a closed interval $[a,b]$: Gauss Legendre fomula}\\
	Considering $\ds f(z)=\Big(\Phi_0(\overline{x}_j^{k+1})-\Phi_0(\underline{x}_j^{k+1})
	\Big) \frac{e^{-\frac{z^2}{2}}}{\sqrt{2\pi}}$, $a=\underline{x}_i^k$ and $b=\overline{x}_i^k$, the goal is to compute $I=\int_a^b f(z)dz$. Applying the change of variables $z=\frac{b-a}{2}x+\frac{a+b}{2}$, $I$ can be written and computed as follows 
	$$I=\frac{b-a}{2}\int_{-1}^1 f\left(\frac{b-a}{2}x+\frac{a+b}{2}\right)dx=\frac{b-a}{2}\sum_{i=1}^n w_i f\left(\frac{b-a}{2}x_i+\frac{a+b}{2}\right)$$
	where $(x_i)_{1 \leq i \leq n}$ are the roots of the $n^{\rm th}$ Legendre polynomial $ P_n(x)=\frac{1}{2^n} \sum_{k=0}^{\lfloor\frac{n}{2} \rfloor} (-1)^k \frac{(2n-2k)!}{k! (n-k)! (n-2k)!}x^{n-2k} $ and the weights $(w_i)_{1 \leq i \leq n}$ are given by
	$$w_i=\frac{2}{(1-x_i^2)P'_n(x_i)^2}=\frac{2(1-x_i^2)}{(n+1)^2P_{n+1}(x_i)^2}.$$
	$\rhd$ {\sc Integration on intervals of the form $[a,+\infty)$ or $(-\infty, a]$: Gauss Laguerre quadrature}\smallskip \\
	We consider $f(z)=\Phi_0(\overline{x}_j^{k+1})-\Phi_0(\underline{x}_j^{k+1})$ and distinguish two cases.\smallskip\\
	$\bullet$ {\it Integration on $[a,+\infty)$}\\
		The goal is to compute $I=\int_a^{+\infty} f(z) e^{-\frac{z^2}{2}}dz$
		where $a=\underline{x}_i^k$. Applying the change of variables $x=\frac{z^2}{2}$ and denoting $g(x)=\frac{f(x)}{x}$ yield
		\begin{align*}
		I = \int_{\frac{a^2}{2}}^{+\infty} \frac{f(\sqrt{2x})}{\sqrt{2x}} e^{-x}dx = \int_{\frac{a^2}{2}}^{+\infty} g(\sqrt{2x}) e^{-x}dx = e^{-\frac{a^2}{2}} \int_0^{+\infty} g\left(\sqrt{2x+a^2} \right)e^{-x}dx
		\end{align*}
		where we applied in the last equality the change of variables $y=x-\frac{a^2}{2}$. Hence, we use Gauss-Legendre quadrature formula to obtain
		$$I=e^{-\frac{a^2}{2}} \sum_{i=1}^N  w_i g\left(\sqrt{2x_i+a^2} \right) $$
		where $(x_i)_{1 \leq i \leq n}$ are the roots of the $n^{\rm th}$ Laguerre polynomial
		$L_n(x)=\sum_{k=0}^n (-1)^k \frac{n!}{k! (n-k)!^2}x^k$
		and the weights $(w_i)_{ 1 \leq i \leq n}$ are given by
		\begin{equation}
		\label{weightslaguerre}
		w_i=\frac{1}{(n+1)L'_n(x_i) L_{n+1}(x_i)}=\frac{x_i}{(n+1)^2 L_{n+1}(x_i)^2}.
		\end{equation}
		$\bullet$ {\it Integration on $(-\infty, a]$}\\
		The goal is to compute $I=\int_{-\infty}^a f(x) e^{-\frac{x^2}{2}}dx$
		where $a=\overline{x}_i^k$.
		Similarly to the previous case, $I$ can be written as follows
		\begin{align*}
		I=\int_{-a}^{+\infty} f(-x) e^{-\frac{x^2}{2}}dx= \int_{\frac{a^2}{2}}^{+\infty} \frac{f(-\sqrt{2z})}{\sqrt{2z}} e^{-z}dz = \int_{\frac{a^2}{2}}^{+\infty} g(\sqrt{2z}) e^{-z}dz= e^{-\frac{a^2}{2}} \int_0^{+\infty} g\left(\sqrt{2z+a^2} \right)e^{-z}dz
		\end{align*}
		where $g(x)=\frac{f(-x)}{x}$. Hence, Gauss-Legendre quadrature formula yields
		$$I=e^{-\frac{a^2}{2}} \sum_{i=1}^N  w_i g\left(\sqrt{2x_i+a^2} \right) $$
		where $(x_i)_{ 1 \leq i \leq n}$ are the roots of $L_n(x)$ and $(w_i)_{ 1 \leq i \leq n}$ are given by $(\ref{weightslaguerre})$.
\paragraph{Approximation of the transition weights}
If the goal is not necessarily the highest level of precision, then one approximates the transition weights $p_{ij}^k$ by $g_j(z_i^k)$ where the function $g_j(z)$ is defined by
\begin{equation}
\label{transBSapprox}
g_j(z)=\Phi_0(\overline{x}_j^{k+1})-\Phi_0(\underline{x}_j^{k+1}).
\end{equation}
and 
$\overline{x}_j^{k+1}$ and $\underline{x}_j^{k+1}$ are given by $(\ref{xdandxu})$.
In fact, based on $(\ref{RBSDE:pijk})$ and then applying Taylor-Lagrange formula, one has 
\begin{align*}
\bar p_{ij}^k &= \int_{z_{i-\frac{1}{2}}^k}^{z_{i+\frac{1}{2}}^k} g_j(z)e^{-\frac{z^2}{2}}\frac{dz}{\sqrt{2\pi}}\\
&=g_j(z_i^k)p_i^k+g'_j(z_i^k )\int_{z_{i-\frac{1}{2}}^k}^{z_{i+\frac{1}{2}}^k} (z-z_i^k)e^{-\frac{z^2}{2}}\frac{dz}{\sqrt{2\pi}} + \int_{z_{i-\frac{1}{2}}^k}^{z_{i+\frac{1}{2}}^k} g_j''(\xi(z))\frac{(z-z_i^k)^2}{2}e^{-\frac{z^2}{2}}\frac{dz}{\sqrt{2\pi}}.
\end{align*}
Since $(z_i^k)_{1 \leq i\leq N}$ is a quadratic optimal quantization sequence of the standard Normal distribution, then it is stationary and the second term of the above inequality is equal to $0$. Moreover,
$$g_j'(z)=\frac{k}{x_0\sqrt{2\pi}} \Big( x_{j+\frac12}e^{-\sigma\sqrt{t_k}z-\frac12\bar x_j^2}-x_{j-\frac12} e^{-\sigma\sqrt{t_k}z-\frac12\underline x_j^2}\Big) $$
and
\begin{align*}
g_j''(z)=&\frac{k}{x_0\sqrt{2\pi}} \left[ \sigma \sqrt{t_k}e^{-\sigma \sqrt{t_k}z}\Big(x_{j-\frac12}e^{-\frac12\underline x_j^2}-x_{j+\frac12}e^{-\frac12\overline x_j^2}\Big)+\frac{k}{x_0}e^{-2\sigma \sqrt{t_k}z}\Big(x_{j+\frac12}^2\overline x_j^2e^{-\frac12\overline x_j^2}-x_{j-\frac12}^2\underline x_j^2e^{-\frac12\underline x_j^2}\Big)\right]. 
\end{align*}
At this stage, one notices that $ \gamma(z):=\exp(-2z-\frac12e^{-2z}) \leq \kappa$ for every $z \in \R$ for some finite positive constant $\kappa$ and that 
$|g_j''(z)| \leq \bar \kappa$ for a finite positive constant $\bar \kappa$. Consequently, $\left| p_{ij}^k-g_j(z_i^k)  \right|$ is bounded.\\

It is important to note that when we estimate the transition weight by $g_j(z_i^k)$, we formally get the transition weight from $x_i^k$ to $x_{j}^{k+1}$ obtained by recursive quantization, even though they are not the same grids.
\begin{rmq}
For local volatility models (CEV models for example), it becomes more complicated to establish such closed formulas for the computations of the transition matrix. One tends to approximate them by Monte Carlo simulations, for example. 
\end{rmq}
\subsubsection{Greedy quantization}
\label{greedy}
Another technique is greedy vector quantization introduced in \cite{LuPa15} and developed in \cite{papiergreedy}. It consists in building a {\it sequence} of points $(a_n)_{n \geq 1}$ in $\R^d$ recursively optimal step by step, in the following {\em greedy} sense: having computed the first $n$ points $a_1, \ldots, a_n$ of the sequence and defining the resulting grid $a^{(n)}=\{a_1,\ldots,a_n\}$ for $n\geq 1$, we compute the $(n+1)$-th point as a solution to the minimization problem
\begin{equation}
\label{greedydef}
\qquad a_{n+1} \in \mbox{argmin}_{\xi \in \R^d} \, e_p(a^{(n)} \cup \{\xi\}, X),
\end{equation}  
with the convention $a^{(0)} = \varnothing$. Quadratic greedy quantization sequences are obtained by implementing "freezing" avatars of usual stochastic optimization algorithms used for optimal quantization, these variants are exposed in details in \cite{LuPa215}. In this paragraph, we give a quick idea on the computation of the greedy quantization sequence of $(\bar X_k)_{0 \leq k \leq n}$. 
Starting at $\widehat X_0=\bar X_0=x_0$, the process $\bar X_k$ can be written, for every $k \in \{1,\ldots,n\}$, as follows
$$\bar X_k = x_0+t_k b(x_0)+ \sqrt{t_k} \sigma(x_0) \ve_{k}$$
where $\ve_k$ is a random variable with distribution $\mathcal{N}(0,I_q)$.  So $\bar X_k$ is with Normal distribution $ \mathcal{N}(m_k,\Sigma_k)$
where $m_k=x_0+t_k b(x_0)$ and $\Sigma_k=\sqrt{t_k} \sigma(x_0)$ and hence this is the distribution that needs to be discretized by greedy quantization. The transition weights in the one-dimensional case are computed via Gaussian quadrature formula like explained for the optimal quantization, and the weights of the Vorono\"i cells by the forward Kolmogorov equation.\\

In the high-dimensional framework ($d>1$), the computations become too demanding. So, instead of designing pure greedy quantization sequences, one tends to build greedy product quantization sequences which are obtained as a result of the tensor product of one-dimensional sequences, when the target law is a tensor product of its independent marginal laws. We refer to \cite{papiergreedy} for further details.
\subsubsection{Greedy recursive quantization}
\label{GRQ}
In the algorithm described in Section $\ref{algorithmics}$, the recursive quantization scheme $(\ref{RBSDE:quantifrecursive})$ is based on an optimal quantization of the sequences $(\widetilde X_k)_{0 \leq k \leq n}$ at each time step $t_k$. Here, we consider, as an alternative, greedy optimal quantization grids $\widehat X_k$ of $\widetilde X_k$. They are designed as follows: At time $t_{k+1}$, assuming that the $N_k$-tuple $(x_1^k, \ldots,x_{N_k}^k)$ and its companion parameters are already computed, one needs to build, step by step by greedy quantization, the $N_{k+1}$-tuple $(x_1^{k+1},\ldots, x_{N_{k+1}}^{k+1})$ which approaches best $\widetilde X_{k+1}=\mathcal{E}_k(\widehat X_k, \ve_{k+1})$. Since $\mathcal{E}_{k}(x_i^k,\ve_{k+1}) \sim \mathcal{N}(m_i^k,\Sigma_i^k)$ with $m_i^k=x_i^k+\Delta b_k(x_i^k)$ and $\Sigma_i^k=\sqrt{\Delta} \sigma_k(x_i^k)$, the first point of the sequence is $x_1^{k+1}= \E\big[ \widehat X_k+\Delta b_k(\widehat X_k)\big]=\sum_{i=1}^{N_k}p_i^k \big(x_i^k+\Delta b_k(x_i^k)\big)$ and then, at each iteration $N$, $N\in \{2,\ldots,N_{k+1}\}$, one adds one point $x_{N}^{k+1}$ following the steps of the greedy variant of Lloyd's algorithm detailed in $\cite{LuPa215}$. One should take in consideration that the local interpoint inertia are computed, at each time step $t_{k+1}$, by 
\begin{equation}
	\label{inertie}
	\sigma_j^2=\sum_{i=1}^{N_k}p_i^k \left( \int_{x_{j}^{k+1,N}}^{x_{j+\frac{1}{2}}^{k+1,N}} \big(\xi-x_j^{k+1,N}\big)^2 P(d\xi) +\int_{x_{j+\frac{1}{2}}^{k+1,N}}^{x_{j+1}^{k+1,N}} \big(\xi-x_{j+1}^{k+1,N}\big)^2 P(d\xi) \right) \; := \sum_{i=1}^{N_k}p_i^k s_{ij}
	\end{equation}
	where $x_{j+\frac{1}{2}}^{k+1,N}=\frac{x_j^{k+1,N}+x_{j+1}^{k+1,N}}{2}$ with $x_0^{k+1,N}=x_{\frac{1}{2}}^{k+1,N}=-\infty$ and $x_N^{k+1,N}=x_{N-\frac{1}{2}}^{k+1,N}=+\infty$. Likewise, the recurrence of the algorithm is given by
	\begin{equation}
	\label{suitegreedy}
	x_{\ell+1}=\frac{\sum_{i=1}^{N_k}p_i^k \E\Big(\mathcal{E}_k(x_i^k,\ve_{k+1}) \mathds{1}_{\big\{\mathcal{E}_k(x_i^k,\ve_{k+1})\in C_j(\Gamma_{k+1})\big\}} \Big)}{\sum_{i=1}^{N_k} p_i^k  \P\Big( \mathcal{E}_k(x_i^k,\ve_{k+1})\in C_j(\Gamma_{k+1})\Big)},
	\end{equation} 
	The companion parameters are computed following the same principle as for the standard recrusive quantization.
\subsection{Examples}
\subsubsection{American call option in a market with bid-ask spread on interest rates}
\label{bidask}
We are interested in the valuation of an American call option with maturity $T$ in a market with a bid-ask spread on interest rates with a borrowing rate $R$ and a lending rate $r \leq R$. The stock price is represented by the process $(X_t)_{t \in [0,T]}$ given by the SDE $(\ref{SDE})$
and the dynamics of the portfolio are given by 
$$-dY_t=\left(-rY_t-\frac{b_t(X_t)-r}{\sigma_t(X_t)}Z_t -(R-r)\min\Big(Y_t-\frac{Z_t}{\sigma_t(X_t)},0 \Big) \right)dt-Z_tdW_t$$
$$Y_T=h(X_T) \qquad \mbox{ and } \qquad Y_t\geq g(X_t)$$
where $h(x)=g(x)=\max (x-K,0)$, $K$ being the strike price. 
\paragraph{Black-Scholes model}
We consider that $(X_t)_{t \in [0,T]}$ evolves following the Black-Scholes dynamics and is time discretized following the Euler scheme, i.e. for every $k \in \{0,\ldots,n-1\}$,
\begin{equation}
\label{BSeuler}
\bar X_{k+1}=\bar X_k +\mu \Delta \bar X_k +\sigma \sqrt{\Delta} \bar X_k\, \ve_{k+1}
\end{equation}
where $\mu$ is the drift and $\sigma$ is the volatility. The space discretization is established via recursive quantization (RQ), optimal quantization (OQ), greedy quantization (GQ) and greedy recursive quantization (GRQ). We consider $n=20$ time steps and build corresponding quantization grids of size $N=100$ and their companion parameters as explained in the different sections previously in the paper. 
Then, we rely on the backward recursion $(\ref{BDPP})$ to compute the value $Y_0$ of the underlying option. Note that the quantities $\pi_{ij}^k$ are computed, for every $k \in \{1, \ldots,n\}$, as a companion parameter with the diffusion $\widehat X_k$ via a Monte Carlo simulation of size $10^6$. We consider the following parameters
$$X_0=100 \, , \quad T=0.25\, , \quad \sigma =0.2\, , \quad \mu=0.05\, , \quad r=0.01\, , \quad R=0.06$$
and we compare the values obtained by the different methods for different values of $K$ varying between $100$ and $120$. As a benchmark, we will assume that the optimal quantization converges to the exact value and, under this hypothesis, we consider the fastest and most accurate version of optimal quantization, which is the quantization-based Richardson-Romberg extrapolation. The idea is the following:\\
If the goal is to approximate $\E f(X)$ for a function $f$ and a random variable $X$, one considers two optimal quantization sequences $\widehat{X}^{N_1}$ of size $N_1$ and $\widehat{X}^{N_2}$ of size $N_2$ of the random variable $X$ and hence $\E f(X)$ is given by
\begin{equation}
\label{romberg}\E f(X)=\frac{N_2^2 \E f(\widehat{X}^{N_2})-N_1^2 \E f(\widehat{X}^{N_1})}{N_2^2-N_1^2}.
\end{equation}
From a practical point of view, one usually considers $N_1=N$ and $N_2=\frac{N}{2}$. Furthermore, when the dimension $d=1$, the standard quantization error is of the form 
$$e_2(X,\mu) \approx c_1 \sqrt{n} +c_2 \sqrt{n} N^{-1}$$
and the Romberg-quantization error is of the form
$$e_2(X,\mu)\approx c_2 \sqrt{n} \left(\frac{1}{N_1}-\frac{1}{N_2} \right) \approx \frac{c_1\sqrt{n}}{2N_1}.$$
So, by studying the values of this error for different values of $n$ and $N_1$, we realize that the best technique is to consider a small number of time steps $n$ and a large size $N$ of the quantizer.\\

In our example, we consider an optimal quantization-based Richardson Romberg extrapolation with $n=5$ and $N=1\, 000$. We observe in Table $\ref{bidaskBSeulerprice}$ the results and the errors obtained by the various methods.
\begin{table}[h]
	\begin{center}
		\begin{tabular}{l|llllllll|l}
			\hline
			{\bf $K$} & {\bf RQ}  & & {\bf GRQ}  & & {\bf OQ} & & {\bf GQ} & & {\bf Romberg}  \\      
			\hline
			& { Value} & { Error} & Value & Error & Value & Error & Value & Error& \\
			\hline
			$100$   & $4.719$ & $0.026$ & $4.728$ & $0.017$ &  $4.747$ &  $0.002$ & $4.704 $& $0.041 $  & $4.745$    \\
			$105$   & $2.538$ & $0.012$ & $2.548$ & $0.002$ &  $2.561$  &  $0.011$ &   $2.529$ &   $0.021$ & $2.55 $ \\
			$110$  & $1.222$ & $0.003$  & $1.225$ & $0.006$ & $1.234$  & $0.015$ &   $1.212$ &   $0.007$ & $1.219$\\
			$115$  & $0.526$ & $0.008$ & $0.526$ & $0.008$ & $0.532$ & $0.014$  & $0.518$ & $0$& $0.518$\\
			$120$  &  $0.203$ &  $0.007$ &  $0.202$ &  $0.006$ &  $0.206$ &  $0.01$ & $0.198$ & $0.002$ & $0.196$\\
			\hline 
			{Average} & & $0.0112$ & & $0.0078$ & & $0.0104$ & & $0.0142$ &\\
			\hline
		\end{tabular}
	\end{center}
	\vspace{-0.5cm}
	\caption{Pricing of an American call option in a market with bid-ask spread for interest rates in a Black-Scholes model by recursive (RQ), greedy recursive (GRQ), optimal (OQ) and greedy (GQ) quantization.} 
	\label{bidaskBSeulerprice}
\end{table}
Here, we emphasize on the computational time of these simulations which are performed on a CPU $2.7$ GHz and $8$ GB memory computer. The optimal quantizer and its companion parameters are obtained in about 40 seconds while the greedy quantization sequence and its companions in about 30 seconds. This is approximately a $25 \%$ gain in time in favor of greedy quantization whose results are comparable (a little less precise) than optimal quantization. As for the recursive quantization, the standard simulations (RQ) are obtained in about $2.3$ minutes and the greedy simulations (GRQ) in about $2$ minutes. Hence, the greedy character introduced in the recursive algorithm brings a $13 \%$ gain in time. The additional cost in time is compensated by the preservation of the Markovian property and the precision of the results.\\
 Figure $\ref{ODCbidaskBS}$ depicts the convergence of the error induced by the approximation of $Y_0$ based on a recursive quantization of the forward process $\bar X_k$. For this illustration, we consider a strike $K=100$ and we make the size $N$ of the grids vary between $10$ and $100$. The graph is represented in a $\log$-$\log$-scale scale and an $\mathcal{O}(N^{-1})$ rate of convergence is clearly observed. 
 \begin{figure}
		\begin{center}
			\includegraphics[width=9cm,height=7cm,angle=0]{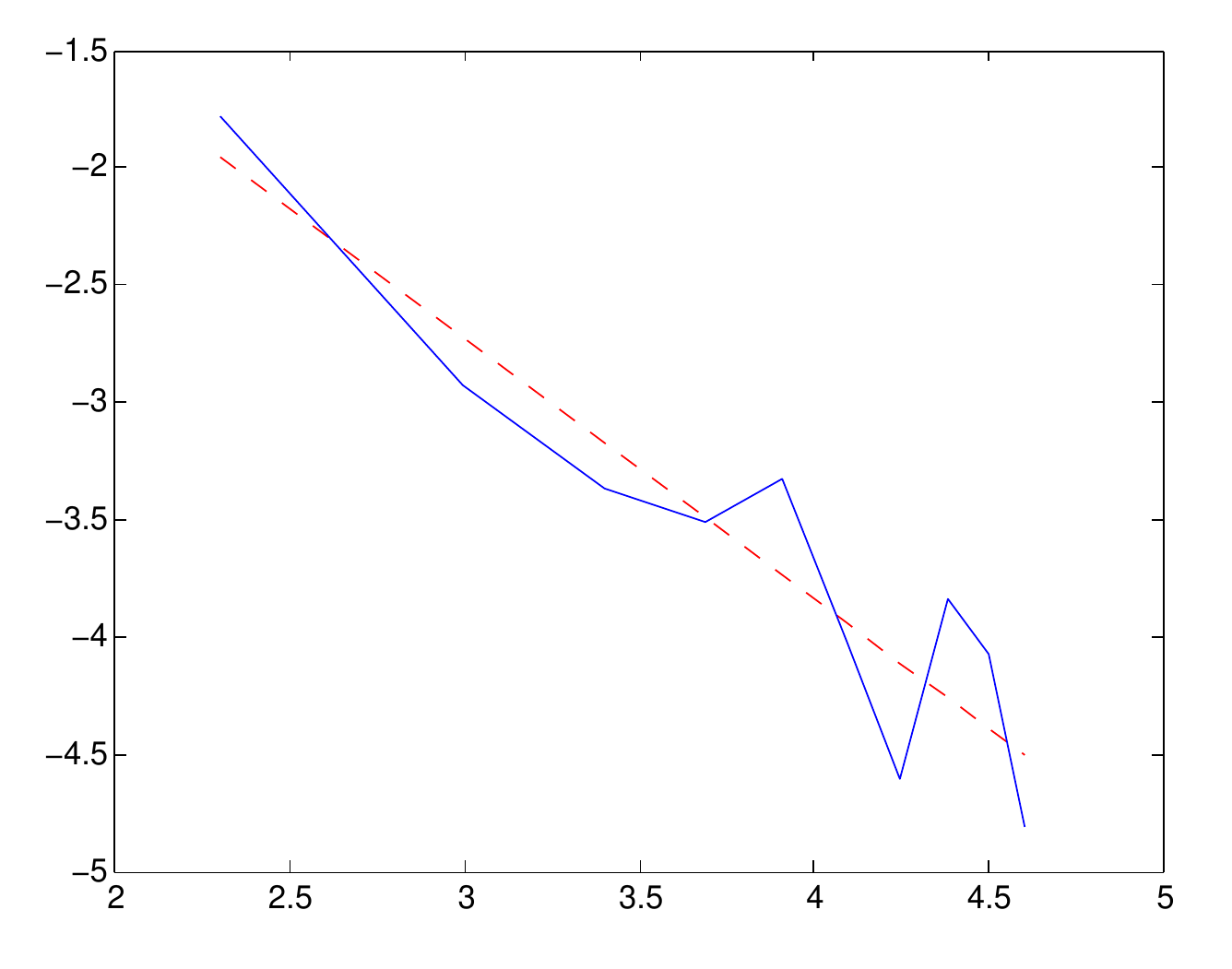}
		\end{center} 
		\vspace{-1cm}
		\caption{Convergence rate of the error induced by the approximation of the Bid-ask spread Call option in a Black-Scholes model discretized by recursive quantization for different sizes $N=10,\ldots,100$. (logarithmic scale)}
		\label{ODCbidaskBS}
	\end{figure}
	\paragraph{CEV model} 
We consider a local volatility model, the CEV model, in which $(X_t)_{0 \leq t \leq T}$ evolves following
\begin{equation}
\label{pCEV}
dX_t=\mu X_t dt+\vartheta X_t^{\delta}dW_t, \qquad X_0=x_0,
\end{equation}
for some $\delta \in (0,1)$ and $\vartheta \in (0,\overline{\vartheta}]$ with $\overline{\vartheta}>0$. $\sigma(x)=\vartheta x^{\delta}$ is the local volatility function. 
The discretized Euler scheme associated to $(X_t)_{t \in [0,T]}$ is given, for every $k \in \{0,\ldots,n-1\}$, by
\begin{equation}
\label{pCEVeuler}
\bar X_{k+1}=\bar X_{k}+\mu \Delta \bar X_{k}+\vartheta \bar X_{k}^{\delta} \sqrt{\Delta}\, \ve_k
\end{equation}
where $(\ve_k)_{1 \leq k \leq n}$ is an i.i.d sequence of random variables with distribution $\mathcal{N}(0,1)$. \\
The construction of the quantizers and the computation of the companion parameters by recursive and greedy recursive quantization is similar to what was done for the Black-Scholes model. As for optimal and greedy quantization, closed forms for the companion parameters are no longer available in this model, we estimate them by Monte Carlo simulations of size $10^5$ coupled with a nearest neighbor search. We build corresponding quantization grids of size $N=150$ and consider $n=15$ time steps. The parameters are the following
$$X_0=100 \, , \quad T=0.25\, , \quad \vartheta =4\, , \quad \delta=0.5\, , \quad \varepsilon=1\, , \quad \mu=0.05\, , \quad r=0.01\, , \quad R=0.06$$
and we compare the values obtained by the different methods for different values of $K$ between $100$ and $120$. The benchmark is given by an optimal quantization-based Richardson-Romberg extrapolation $(\ref{romberg})$. We observe in Table $\ref{bidaskpCEVeulerprice}$ the results and errors obtained by such comparisons. As for the computation time, we note that the optimal quantizer and its companion parameters are obtained in about $100$ seconds while the greedy quantization sequence and its companions in about $70$ seconds. The fact that these computations take more time for the CEV model than for the Black-Scholes model is due to the non-existence of closed formulas for the computation of the companion parameters in the CEV model, the computation of the quantizers themselves is almost instantaneous. Moreover, the recursive quantizer and its companions are computed in about $3.5$ minutes while the greedy recursive quantizers in about $3$ minutes.
\begin{table}[h]
	\begin{center}
		\begin{tabular}{l|llllllll|l}
			\hline
			{\bf $K$} & {\bf RQ}  & & {\bf GRQ}  & & {\bf OQ} & & {\bf GQ} & & {\bf Romberg}  \\      
			\hline
			& { Value} & { Error} & Value & Error & Value & Error & Value & Error& \\
			\hline
			$100$   & $8.517$ & $0.074$ & $8.524$ & $0.067$ &  $8.536$ &  $0.055$ & $8.593 $& $0.002 $  & $8.591$    \\
			$105$   & $6.262$ & $0.049$ & $6.272$ & $0.039$ &  $6.288$  &  $0.023$ &   $6.321$ &   $0.01$ & $6.311 $ \\
			$110$  & $4.479$ & $0.023$  & $4.483$ & $0.019$ & $4.498$  & $0.004$ &   $4.522$ &   $0.02$ & $4.502$\\
			$115$  & $3.11$ & $0.006$ & $3.113$ & $0.003$ & $3.125$ & $0.009$  & $3.128$ & $0.012$& $3.116$\\
			$120$  &  $2.094$ &  $0.003$ &  $2.1$ &  $0.009$ &  $2.109$ &  $0.018$ & $2.103$ & $0.012$ & $2.091$\\
			\hline 
			{Average} & & $0.031$ & & $0.0274$ & & $0.0218$ & & $0.0112$ &\\
			\hline
		\end{tabular}
	\end{center}
	\vspace{-0.5cm}
	\caption{Pricing of an American call option in a market with bid-ask spread for interest rates in a CEV model by recursive (RQ), greedy recursive (GRQ), optimal (OQ) and greedy (GQ) quantization.}
	\label{bidaskpCEVeulerprice}
\end{table}
\subsubsection{Two-dimensional American exchange options}
\label{exchange}
We are interested in pricing an American exchange option with exchange rate $\mu$ and maturity $T$. This price is given by the value $Y_0$ at time $t_0$ of the solution of the RBSDE $(\ref{BSDE})$ with driver $f=0$ and $h_t(x)=g_t(x)=\max\big(e^{-\lambda t} X^1_t -\mu X^2_t ,0\big)$. $X^1_t$ and $X^2_t$ are two assets, such that $X^1_t$ is with a geometric dividend rate $\lambda$ and $X^2_t$ is without dividend, both following a Black-Scholes model. The discretized Euler scheme $(\bar X_k^1,\bar X_k^2)$ is given, for every $k \in \{0, \ldots, n-1 \}$, by
\[
\begin{array}{rl}
\bar X_{k+1}^1&=\bar X_k^1 e^{(r-\frac{\sigma^2}{2})\Delta +\sigma \sqrt{\Delta} \ve_k^1 }\\
\bar X_{k+1}^2 &=\bar X_k^2 e^{(r-\frac{\sigma^2}{2})\Delta +\sigma \sqrt{\Delta} (\rho \ve_k^1+\sqrt{1-\rho^2} \ve_k^2)}
\end{array}  
\]
where $r$ is the interest rate, $\sigma$ the volatility, $\rho$ is a correlation coefficient and $(\ve^1_k,\ve^2_k)_{1 \leq k \leq n}$ is a sequence of i.i.d. random variables with distribution $\mathcal{N}(0,I_2)$. \\
From a numerical point of view, we discretize in $n=10$ time steps, build quantizers of size $N_X=100$ and consider the following parameters
$$X_0^1=40\, , \quad T=1\, , \quad r=0\, , \quad \sigma =0.2\, , \quad \lambda =0.05 \, , \quad \mu=1 \, .$$
In high dimensions ($d>1$),  the implementation of the recursive quantization algorithm is too expensive and its cost in time is very high. We consider, instead, the hybrid recursive quantization, introduced in Section \ref{hybridQR} and use sequences of optimal quantizers $(\hat \ve_l^k)_{1\leq l\leq N_{\ve}}$ of size $N^{\ve}= 1000$ to compute the sequence and the companion parameters as detailed in Section \ref{algorithmics}. We also build optimal quantizers and greedy product quantization sequences (see Section \ref{greedy}). We compute the price of the option by these methods for $X_0^2\in \{36;\,44\}$ and $\rho\in \{-0.8; \, 0;\, 0.8\}$ and compare the results obtained to those computed by a finite difference algorithm in \cite{ViZa02} and expose the errors hence induced in Table \ref{exchange2}.\\
\begin{table}[h]
	\begin{center}
	\begin{tabular}{l|l|llllll|l}
		\hline
		{\bf  $X_0^2$} &  {$ \rho$}  &    {\bf OQ}   &&  {\bf HRQ} & & {\bf GPQ} & & {\bf Benchmark}  \\      
		\hline
		&&  Value & Error & Value & Error & Value & Error &\\
		\hline
		$36$   & $-0.8$   & $7.062$  & $0.087$  &  $6.979$  & $0.004 $  &  $6.926$ & $0.049$ & $6.975$    \\
		$36$   & $0$   & $5.832$ & $0.186$  &  $5.706$   &    $0.06$  &  $5.763$ & $0.117$ & $5.646 $ \\
		$36$  & $0.8$  &  $4.076$   & $0.076$ & $4.008$  &   $0.008$ & $4$ & $0$ & $4$\\
		\hline 
		Average error & & & $0.116$ & & $0.024$ & & $0.055$ & \\
		\hline
		$44$  & $-0.8$ & $3.834$ & $ 0.065$  & $3.741$    & $0.028$ &  $3.609$ & $0.16$ & $3.769$\\
		$44$  &  $0$  &  $2.453$ & $ 0.117$  &  $2.329$   &  $0.007$ &  $2.042$ & $0.294$ & $2.336 $\\
		$44$  & $0.8$   &$0.426$ & $0.067$  & $0.282$ &   $0.077$ &  $0.401$ & $0.042$ & $0.359$ \\
		\hline
		Average error & & & $0.083$ & & $0.037$ &  & $0.165$ & \\
		\hline
	\end{tabular}
	\end{center}
	\vspace{-0.5cm}
	\caption{Pricing of an American exchange option for $d=2$ in a BS model by hybrid recursive (HRQ), optimal (OQ) and greedy product quantization (GPQ).} 
	\label{exchange2}
\end{table} 
Similarly to the one-dimensional Example \ref{bidask}, a gain in the computation time appears in favor of the greedy quantization. In fact, greedy product quantization sequences are obtained in about $55$ seconds whereas optimal  and hybrid recursive quantizers in about $70$ seconds and $3.75$ minutes respectively, and hence the gain is about $20 \%$ compared to optimal quantization and  $75 \%$ compared to hybrid recursive quantization. Moreover, we remark that hybrid recursive quantization gives the most precise results while an expected gain in precision for optimal quantization compared to greedy quantization is observed.  
\small

\section{Appendix }
\subsection{Appendix A: The proof of Lemma $\ref{lemme1}$}
First note that the function $f: u\mapsto |u|^r$ satisfies (since $r\ge 2$)
\[
\nabla |u|^r = r  |u|^{r-1} \frac{u}{|u|}\quad \mbox{ and }\quad  \nabla^2|u|^r =  r|u|^{r-2}  \left((r-2)  \frac{u}{|u|}  \frac{u^*}{|u|}  +I_d\right)
\]
(convention $\frac{0}{|0|}=0$). 
Consequently, Taylor's Theorem with Lagrange remainder applied to $f$ reads
\[
f(u+v) = f(u) + \langle \nabla f(u),v\rangle +\frac 12 v^*\nabla^2f(\xi_{u,v})v
\]
for some  $\xi_{u,v}= \lambda_{u,v} u +(1-\lambda_{u,v})(u+v)$, $\lambda_{u,v}\!\in(0,1)$. Note that
\[
 v^*\nabla^2f(\xi_{u,v})v= r|\xi_{u,v}|^{r-2}\Big((r-2)   \frac{\langle v,\xi_{u,v}\rangle^2}{|\xi_{u,v}|^2}  +|v|^2\Big)\le r |\xi_{u,v}|^{r-2}(r-1)  |v|^2
\]
owing to Cauchy-Schwartz inequality. Then, noting that $|\xi_{u,v}|\le |u|\vee  |u+v|\le |u|+|v|$, we obtain
\begin{align*}
|u+v|^r & \le |u|^r + \big\langle r  |u|^{r-1} \frac{u}{|u|}, v \big\rangle +\frac{r(r-1)}{2}\big( |u|+|v|\big)^{r-2} |v|^2\\
           & \le |u|^r + \big\langle r  |u|^{r-1} \frac{u}{|u|}, v \big\rangle +\frac{r(r-1)}{2}2^{(r-3)_+}\big( |u|^{r-2} + |v|^{r-2}\big) |v|^2\\
           & = |u|^r + \big\langle r  |u|^{r-1} \frac{u}{|u|}, v \big\rangle +\frac{r(r-1)}{2}2^{(r-3)_+}\big( |u|^{r-2} |v|^2 + |v|^{r}\big).
\end{align*}
Applying the above inequality  to $u= a$ and $v =  \sqrt{h} \,A Z$ yields
\[
\big|a+A\sqrt{h} Z \big|^r\le |a|^r + r \Big\langle |a|^{r-1} \frac{a}{|a|} ,  A\sqrt{h} \,Z\Big\rangle +2^{(r-3)_+} \frac{r(r-1)}{2}\big(h |a|^{r-2} |AZ|^2 +h^{\frac r2} |AZ|^{r}\big).
\]
Applying Young's inequality  (when $r>2$) to the product $|a|^{r-2}|AZ|^2$ with conjugate exponents $r'= \frac{r}{r-2}$ and $s'= \frac r2$ yields
\begin{align}
\label{inegalitedulemme}
\big|a+A\sqrt{h} Z \big|^r & \le |a|^r + r \Big\langle |a|^{r-1} \frac{a}{|a|} ,  A\sqrt{h} \,Z \Big\rangle +2^{(r-3)_+} \frac{r(r-1)}{2}\Big(\frac h r \big((r-2) |a|^{r}  + 2  |AZ|^r\big) + h^{\frac r2} |AZ|^{r}\Big)\nonumber \\
                                     & \le |a|^r \left( 1+2^{(r-3)_+} \frac{(r-1)(r-2)}{2}h\right)    + r \Big\langle |a|^{r-1} \frac{a}{|a|} ,  A\sqrt{h} \,Z \Big\rangle \nonumber \\
                                     & \quad+ 2^{(r-3)_+}  (r-1)h \|A\|^r |Z|^r\Big(1 + \frac r2\, h^{\frac{r-2}{2}}\Big).
\end{align}
Finally taking expectation and using that $\E\, Z=0$ and $h < h_0$ yields the announced result.
\subsection{Appendix B: Proof of Theorem \ref{timeerror}}
To get into the core of the proof of the first part of Theorem \ref{timeerror}, we need to show some properties of the functions $\bar{y}_k$ and $\bar{z}_k$.
\begin{lem}
\label{Lipandlg}
	The functions $\bar{y}_{k}$ and $\bar{z}_{k}$ defined by $(\ref{zbark})$-$(\ref{ybark})$ are Lipschitz continuous with $[\bar{y}_{k}]_{\rm Lip}$ and $[\bar{z}_{k}]_{\rm Lip}$ their respective Lipschitz coefficients  given by 
	$$[\bar{y}_k]_{\rm Lip}\leq [h]_{\rm Lip}+\Delta_{\max} (1+\Delta_{\max})[f]_{\rm Lip}+e^{(1+C_f+C_{b,\sigma})\Delta_{\max}}[\bar{y}_{k+1}]_{\rm Lip} $$
	and
	$$[\bar{z}_{k}]_{\rm Lip} \leq  \frac{1}{\sqrt{\Delta}} [\bar{y}_{k+1}]_{\rm Lip} e^{C_{b,\sigma}\Delta}$$
	where $C_{b,\sigma}=1+\Delta_{\max}(2[b_k]_{\rm Lip}+[\sigma_k]_{\rm Lip})+\Delta_{\max}^2[b_k]_{\rm Lip}^2$ and $C_f=2 [f]_{\rm Lip} + [f]_{\rm Lip}^2$.
\end{lem}
\begin{proof}
{\bf STEP $1$:} We show that $\bar{y}_k$ and $\widetilde{y}_k$ are Lipschitz continuous. We rely on a backward induction. In this part, we denote $\mathcal{E}_k^x=\mathcal{E}_k(x,\ve_{k+1})$ for every $x$ to alleviate notations. It is clear that $[\bar y_n]_{\rm Lip}=[g]_{\rm Lip}$. We assume that $\bar y_{k+1}$ is $[\bar y_{k+1}]_{\rm Lip}$-Lipschitz continuous and show the Lipschitz continuity of $\bar{y}_k$. For every $x,x'$, we start by noticing that
	\begin{align*}
	|\widetilde{y}_k(x)-\widetilde{y}_k(x')|&= \Big|\E_k \bar{y}_{k+1}\big( \mathcal{E}_k^x\big)-\E_k \bar{y}_{k+1}\big( \mathcal{E}_k^{x'}\big)  +\Delta \Big( A_k (x-x') +B_k \E_k\left( \bar{y}_{k+1}\big( \mathcal{E}_k^x\big)-\bar{y}_{k+1}\big( \mathcal{E}_k^{x'}\big)\right)\\
	& \quad +\frac{C_k}{\sqrt{\Delta}} \E_k\left( \bar{y}_{k+1}\big( \mathcal{E}_k^x\big)-\bar{y}_{k+1}\big( \mathcal{E}_k^{x'}\big)\right) \ve_{k+1} \Big) \Big|
	\end{align*}
	where 
	\begin{align*}
	A_k & =\frac{\mathcal{E}_k\big(x,\E_k\,\bar{y}_{k+1}(\mathcal{E}_k^x), \bar{z}_k(x)\big)-\mathcal{E}_k\big(x',\E_k\,\bar{y}_{k+1}(\mathcal{E}_k^{x}),\bar{z}_k(x)\big)}{x-x'}\; \mathds{1}_{x \,\neq \, x'},\\
	B_k&=\frac{\mathcal{E}_k\big(x',\E_k\,\bar{y}_{k+1}(\mathcal{E}_k^x),\bar{z}_k(x)\big)-\mathcal{E}_k\big(x',\E_k\,\bar{y}_{k+1}(\mathcal{E}_k^{x'}),\bar{z}_k(x)\big)}{\E_k\left( \bar{y}_{k+1}\big( \mathcal{E}_k^x\big)-\bar{y}_{k+1}\big( \mathcal{E}_k^{x'}\big)\right)} \; \mathds{1}_{\E_k\, \bar{y}_{k+1}\big( \mathcal{E}_k^x\big)\, \neq \, \E\, \bar{y}_{k+1}\big( \mathcal{E}_k^{x'}\big)},\\	
	C_k&=\frac{\mathcal{E}_k\left(x',\E_k\, \bar{y}_{k+1}(\mathcal{E}_k^{x'}),\bar{z}_k(x)\right)-\mathcal{E}_k\big(x',\E_k\, \bar{y}_{k+1}(\mathcal{E}_k^{x'}),\bar{z}_k(x')\big)}{\E_k \big( \bar{y}_{k+1}\big( \mathcal{E}_k^x\big)-\bar{y}_{k+1}\big( \mathcal{E}_k^{x'}\big) \ve_{k+1}\big)} \; \mathds{1}_{\bar{z}_k(x) \, \neq \,\bar{z}_k(x')}.
	\end{align*}
	It is clear that these quantities are $\mathcal{F}_{t_k}$-measurable and that $\max\big(|A_k|,|B_k|,|C_k|\big) \leq [f]_{\rm Lip}$ so
	$$|\widetilde{y}_k(x)-\widetilde{y}_k(x')| \leq \Delta [f]_{\rm Lip} |x-x'|+\E_k \left|\left(\bar{y}_{k+1}(\mathcal{E}_k^x)-\bar{y}_{k+1}(\mathcal{E}_k^{x'})\right)\left(1+\Delta B_k+C_k\sqrt{\Delta}\ve_{k+1}\right) \right|.$$
	Now, using the inequality $(a+b)^2\leq a^2(1+\Delta)+b^2(1+\frac{1}{\Delta})$, one obtains 
	\begin{align*}
	\big|\widetilde{y}_k(x)-\widetilde{y}_k(x')\big|^2 \leq & \, \Delta^2 [f]_{\rm Lip}^2 |x-x'|^2 (1+\tfrac{1}{\Delta})+(1+\Delta)\E_k \left|\big(\bar{y}_{k+1}(\mathcal{E}_k^x)-\bar{y}_{k+1}(\mathcal{E}_k^{x'})\big)\big(1+\Delta B_k+C_k\sqrt{\Delta}\ve_{k+1}\big) \right|^2\\
	\leq &\; \Delta (1+\Delta) [f]_{\rm Lip}^2 |x-x'|^2 +(1+\Delta)\E_k \left|\bar{y}_{k+1}(\mathcal{E}_k^x)-\bar{y}_{k+1}(\mathcal{E}_k^{x'})\right|^2 \E_k\big(1+\Delta B_k+C_k\sqrt{\Delta}\ve_{k+1}\big)^2.
	\end{align*}
	Since, $(\ve_k)_{k \geq 0}$ is a sequence of i.i.d. random variables, then \begin{align*}
	\E_k\left(1+\Delta B_k+C_k\sqrt{\Delta}\ve_{k+1}\right)^2&= (1+[f]_{\rm Lip}\Delta)^2+\Delta [f]_{\rm Lip}^2 \E|\ve_{k+1}|^2 \leq 1+2\Delta [f]_{\rm Lip} +\Delta [f]_{\rm Lip}^2 \leq e^{C_f \Delta},
	\end{align*}
	so that
	$$|\widetilde{y}_k(x)-\widetilde{y}_k(x')|^2 \leq \Delta (1+\Delta) [f]_{\rm Lip}^2 |x-x'|^2 + e^{(C_f+1)\Delta} \E_k \left|\bar{y}_{k+1}(\mathcal{E}_k^x)-\bar{y}_{k+1}(\mathcal{E}_k^{x'})\right|^2.$$
	At this stage, one notes that if $a,b \geq 0$, then $\max(a,b)^2\leq \max(a^2,b^2)$ so 
	\begin{align*}
	|\bar{y}_{k}(x)-\bar{y}_k(x')|^2 &\leq \max \Big(|h_k(x)-h_k(x')|^2, |\widetilde{y}_k(x)-\widetilde{y}_k(x')|^2\Big)\\
	& \leq \max\Big([h]_{\rm Lip}^2|x-x'|^2,\Delta (1+\Delta)[f]_{\rm Lip}^2 |x-x'|^2+e^{\Delta (1+C_f)}\E_k \left|\bar{y}_{k+1}(\mathcal{E}_k^x)-\bar{y}_{k+1}(\mathcal{E}_k^{x'})\right|^2\Big)
	\end{align*}
	We use the fact that $\bar{y}_{k+1}$ is Lipschitz continuous and write
	\begin{align}
	\label{przlip}
	\E_k \left|\bar{y}_{k+1}(\mathcal{E}_k^x)-\bar{y}_{k+1}(\mathcal{E}_k^{x'})\right|^2 & \leq [\bar{y}_{k+1}]_{\rm Lip} \E|x-x'+\Delta (b_k(x)-b_k(x'))+\sqrt{\Delta}(\sigma_k(x)-\sigma_k(x'))\ve_{k+1}|^2\nonumber \\
	& \leq [\bar{y}_{k+1}]_{\rm Lip} |x-x'|^2 (1+\Delta(2[b_k]_{\rm Lip}+[\sigma_k]_{\rm Lip})+\Delta^2[b_k]_{\rm Lip}^2)\nonumber \\
	&\leq  [\bar{y}_{k+1}]_{\text{Lip }}e^{C_{b,\sigma}\Delta} |x-x'|^2
	\end{align}
	where $C_{b,\sigma}=2[b_k]_{\rm Lip}+[\sigma_k]_{\rm Lip} +\Delta_{\max}[b_k]_{\rm Lip}^2$. Therefore, one has 
	\begin{align*}
	|\bar{y}_{k}(x)-\bar{y}_k(x')|^2 &\leq \max \Big([h]_{\rm Lip}^2|x-x'|^2, \Delta (1+\Delta)[f]_{\rm Lip} |x-x'|^2+e^{(1+C_f+C_{b,\sigma})\Delta}[\bar{y}_{k+1}]_{\rm Lip} |x-x'|^2\Big). 
	\end{align*}
	Now, since $\Delta \leq\Delta_{\max}$, one deduces that $\bar{y}_k$ is $[\bar{y}_k]_{\rm Lip}$-Lipschitz continuous with 
	$$[\bar{y}_k]_{\rm Lip}\leq [h]_{\rm Lip}+\Delta_{\max} (1+\Delta_{\max})[f]_{\rm Lip}+e^{(1+C_f+C_{b,\sigma})\Delta_{\max}}[\bar{y}_{k+1}]_{\rm Lip} .$$
{\bf STEP $2$:} 
	For the Lipschitz continuity of $\bar z_k$, we will use the same property of $\bar y_{k+1}$, more precisely inequality $(\ref{przlip})$. For every $x,x' \in \R^d$, 
	\begin{align*}
	|\bar{z}_k(x)-\bar{z}_k(x')|^2 \leq & \frac{1}{\sqrt{\Delta}} \E\left|(\bar{y}_{k+1}(\mathcal{E}_k^x)-\bar{y}_{k+1}(\mathcal{E}_k^{x'}))\ve_{k+1}\right|\leq \frac{1}{\sqrt{\Delta}} [\bar{y}_{k+1}]_{\rm Lip} e^{C_{b,\sigma}\Delta} |x-x'|^2.
	\end{align*}
	\hfill $\square$
	\end{proof}
\begin{refproof}
	We denote $\delta V_t =V_t-\bar{V}_t$ for any process $V$. We consider the following stopping times 
	\begin{equation}
	\label{tauc}
	\tau^c=\inf\left\{u \geq t ; \; \int_t^u \mathds{1}_{\delta Y_s>0}dK_s>0 \right\} \wedge T,
	\end{equation}
	\begin{equation}
	\label{taud}
	\tau^d=\min \left\{t_j \geq t; \; \mathds{1}_{\delta Y_i <0} \big( h_i(\bar{X}_i)-\widetilde{Y}_i\big)_+>0\right\}\wedge T
	\end{equation}
	and
	$$\tau=\tau^c \wedge \tau^d.$$
	Keeping in mind that $(\bar{Y}_t)_t$ is a c\`agl\`ad process (see $(\ref{YbarYtildecont})$), we use It\^o's formula between $t$ and $\tau$ to write
	\begin{align*}
	|\delta Y_{\tau}|^2&=|\delta Y_{t}|^2+2 \int_{[t,\tau)}\delta Y_s d\delta Y_s+\int_{[t,\tau)}|\delta Z_s^2|ds+\sum_{t \leq s < \tau} ( \delta Y_s-\delta Y_{s^-})^2\\
	&= |\delta Y_{t}|^2 -2 \int_{[t,\tau)}\delta Y_s \big( f(\Theta_s)-f(\bar{\Theta}_{\underline{s}})\big) ds -2\int_{[t,\tau)}\delta Y_s  dK_s +2 \int_{[t,\tau)}\delta Y_s d\bar{K}_s\\
	& \quad +\int_{[t,\tau)}(Z_s- \bar{Z}_s)dW_s +\int_{[t,\tau)}|\delta Z_s^2|ds+\sum_{t \leq s < \tau} ( \delta Y_s-\delta Y_{s^-})^2
	\end{align*}
	where $\Theta_s=(X_s,Y_s,Z_s)$, $\bar{\Theta}_{\underline{s}}=(\bar{X}_{\underline{s}}, \E_{\underline{s}}\bar{Y}_{\bar{s}}, \bar{\zeta}_{\underline{s}})$, $\underline{s}=t_i$ and $\bar{s}=t_{i+1}$ if $s\in (t_i,t_{i+1})$. One notes that $( \delta Y_s-\delta Y_{s^-})^2= (\bar{Y}_s-\widetilde{Y}_s)^2$ so that, by the definition of the process $\bar{K}_s$, one has 
	\begin{align}
	|\delta Y_t|^2=& |\delta Y_{\tau}|^2+2 \int _t^{\tau} \delta Y_s \Big( f\big(\Theta_s\big)-f\big(\bar{\Theta}_{\underline{s}}\big)\Big) ds +2\int_{[t,\tau)}\delta Y_s  dK_s- \int_{[t,\tau)}(Z_s- \bar{Z}_s)dW_s  \nonumber \\ 
	&-\int_{[t,\tau)}|\delta Z_s^2|ds -\sum_{t \leq t_i < \tau}\left(2 \delta Y_i (h_i(\bar{X}_i)-\widetilde{Y}_i)_++(\bar{Y}_i-\widetilde{Y}_i)^2 \right).
	\end{align}
	For every $t_i < \tau$, we set $\alpha_i = 2 \delta Y_i\,  (h_i(\bar{X}_i)-\widetilde{Y}_i)_++(\bar{Y}_i-\widetilde{Y}_i)^2$ for convenience. It can be written as follows: 
	\begin{align*}
	\alpha_i &=  2(Y_i-\bar{Y}_i)\big(h_i(\bar{X}_i) \vee \widetilde{Y}_i-\widetilde{Y}_i\big)+(\bar{Y}_i-\widetilde{Y}_i)^2 \\
	&= 2(Y_i-\bar{Y}_i)(\bar{Y}_i-\widetilde{Y}_i)+(\bar{Y}_i-\widetilde{Y}_i)^2 \\
	&= (Y_i-\widetilde{Y}_i)^2-(Y_i-\bar{Y}_i)^2\\
	&= (Y_i-\widetilde{Y}_i)^2-(\delta Y_i)^2.
	\end{align*}
	where we used, in the third line, the equality $2(a-b)(b-c)+(b-c)^2=(a-c)^2-(a-b)^2$. \\
	
	Let us evaluate this term $\alpha_i$. For every $t_i < \tau \leq \tau^d$, we have, by $(\ref{taud})$, two choices: Either $h_i(\bar{X}_i)<\widetilde{Y}_i$ so that $\bar{Y}_i=\widetilde{Y}_i$ and hence, $\delta Y_i =Y_i- \widetilde{Y}_i$ and $(\delta Y_i)^2 =(Y_i- \widetilde{Y}_i)^2$, or, $\delta Y_i >0 $ so, since $\widetilde{Y}_t < \bar{Y}_t$ for every $t$, we have  $\widetilde{Y}_i -Y_i< \bar{Y}_i-Y_i<0$ and then, $(\delta Y_i)^2< (Y_i-\widetilde{Y}_i)^2$.\\
	Consequently, for every $t_i \in [t, \tau[$, $$\alpha_i =(Y_i-\widetilde{Y}_i)^2-(\delta Y_i)^2\geq 0.$$ 
	Moreover, for $s \in [t,\tau[,$ $s < \tau^c$ so that, by $(\ref{tauc})$, we have $\delta Y_s<0$ $dK_s$-a.e. Hence,
	$$\int_{[t,\tau)} \delta Y_s dK_s<0.$$
	This yields 
	$$|\delta Y_t|^2\leq  |\delta Y_{\tau}|^2+2 \int _t^{\tau} \delta Y_s \big( f(\Theta_s)-f(\bar{\Theta}_{\underline{s}})\big) ds - \int_{[t,\tau)}(Z_s- \bar{Z}_s)dW_s -\int_{[t,\tau)}|\delta Z_s^2|ds .$$ 
	Now, we evaluate $|\delta Y_{\tau}|^2$ depending on the value of $\tau$. \\
	\smallskip
	$\bullet$ If $\tau=\tau^d$, then, by $(\ref{taud})$, $\delta Y_{\tau}<0$ and $h_{\tau}(\bar{X}_{\tau})> \widetilde{Y}_{\tau}$. This means $\bar{Y}_{\tau}=h_{\tau}(\bar{X}_{\tau})$ and, since $Y_t \geq h_t(X_t)$ for every $t \in [0,T]$, 
		$$0 \leq |\delta Y_{\tau}|=\bar{Y}_{\tau}-Y_{\tau}=h_{\tau}(\bar{X}_{\tau})-Y_{\tau}\leq h_{\tau}(\bar{X}_{\tau})- h_{\tau}(X_{\tau}) .$$
		Hence, $|\delta Y_{\tau}|^2\leq [h]_{\rm Lip}^2|X_{\tau}-\bar{X}_{\tau}|^2.$\\
		\smallskip
		$\bullet$ If $\tau=\tau^c$ , then, by $(\ref{tauc})$, $\delta Y_{\tau}> 0$ and $K_s$ changes its value so $Y_{\tau}=h_{\tau}(X_{\tau})$. Consequently, 
		$$0\leq \delta Y_{\tau}=Y_{\tau}-\bar{Y}_{\tau}= h_{\tau}(X_{\tau})-\bar{Y}_{\tau} \leq h_{\tau}(X_{\tau})-h_{\tau}(\bar{X}_{\tau})$$
		since $\bar{Y}_t\geq h_t(\bar{X}_t)$ for every $t \in [0,T]$. So, $|\delta Y_{\tau}|^2\leq [h]_{\rm Lip}^2|X_{\tau}-\bar{X}_{\tau}|^2.$\\
		\smallskip
		$\bullet$ If $\tau=T$, $\delta Y_{T}=g(X_T) -g(\bar{X}_T)$ so 
		$|\delta Y_{\tau}|^2\leq [g]_{\rm Lip}^2|X_{\tau}-\bar{X}_{\tau}|^2.$ 
	Consequently, for all the possible values of $\tau$, we have 
	$$|\delta Y_{\tau}|^2\leq C_{h,g,b,T,\sigma} \Delta$$
	where $C_{h,g,b,T,\sigma}$ is a constant related to the Euler discretization error and depending on $h$ and $g$. Thus, taking the conditional expectation with respect to $t$ leads to 
	\begin{equation}
	\E_t \left(|\delta Y_t|^2 +\int_t^{\tau} |\delta Z_s|^2 ds \right) \leq C_{h,g,b,T,\sigma} \Delta + 2 \E_t \int_t^{\tau} \delta Y_s \big(f_s(\Theta_s)-f_{\underline{s}}(\bar{\Theta}_{\underline{s}}) \big)-\E_t\int_{[t,\tau)}(Z_s- \bar{Z}_s)dW_s.
	\end{equation}
	
	It remains to study the term $2 \E_t \int_t^{\tau} \delta Y_s \big(f_s(\Theta_s)-f_{\underline{s}}(\bar{\Theta}_{\underline{s}}) \big)$. As $f$ is Lipschitz continuous, we use Young's inequality $ab \leq \frac{a^2}{2\alpha}+\frac{\alpha b^2}{2}$ and the inequality $(a+b+c)^2 \leq 3(a^2+b^2+c^2)$ to write
	\begin{align}
	\label{termedef1}
	2 \E_t \int_t^{\tau} \delta Y_s \big(f_s(\Theta_s)-f_{\underline{s}}(\bar{\Theta}_{\underline{s}}) \big) \leq &\, \frac{3[f]_{\rm Lip}}{\alpha} \left( \int_t^{\tau} \E_t|X_s-\bar{X}_{\underline{s}}|^2ds +\int_t^{\tau} \E_t|Y_s-\E_{\underline{s}}\bar{Y}_{\bar{s}}|^2ds \right.\nonumber \\
	& \left.+\E_t\int_t^{\tau} |Z_s-\bar{\zeta}_{\underline{s}}|^2ds \right) +\alpha [f]_{\rm Lip} \E_t\int_t^{\tau} |\delta Y_s|^2ds.
	\end{align}
	On the one hand, 
	$$\E_t|X_s-\bar{X}_{\underline{s}}|^2 \leq 2 \E_t|X_s-X_{\underline{s}}|^2+2 \E_t|X_{\underline{s}}-\bar{X}_{\underline{s}}|^2$$
	where 
	$\E_t|X_s-X_{\underline{s}}|^2$ is bounded as follows: from $(\ref{SDE})$ taken between $\underline{s}$ and $s$, we have 
	\begin{align*}
	\E_t|X_s-X_{\underline{s}}|^2   &\leq 2\E_t \int_{\underline{s}}^s b_u(X_u)^2du + 2\E_t \int_{\underline{s}}^s \sigma_u(X_u)^2du\\
	& \leq 4 L_{b, \sigma}^2 \E_t \int_{\underline{s}}^s (1+|X_u|)^2du \\
	&\leq 4 L_{b,\sigma}^2 \Delta \E_t \sup_{ \underline{s}\leq u \leq s}(1+|X_u|)^2 .
	\end{align*} 
	Hence, denoting $C_X=4 L_{b,\sigma}^2 (\tau-t)$, 
	\begin{equation}
	\label{borneX}
	\int_t^{\tau} \E_t|X_s-\bar{X}_{\underline{s}}|^2ds \leq C_X \Delta \E_t \sup_{ \underline{s}\leq u \leq s}(1+|X_u|)^2+2\int_t^{\tau} \E_t|X_{\underline{s}}-\bar{X}_{\underline{s}}|^2ds.
	\end{equation}
	On the other hand, 
	\begin{equation}
	\label{termedey}
	\E_t|Y_s-\E_{\underline{s}}\bar{Y}_{\bar{s}}|^2 \leq 2 \E_t|Y_s-\bar{Y}_s|^2 +4\E_t|\bar{Y}_s-\widetilde{Y}_{\underline{s}}|^2+4\E_t\E_{\underline{s}}|\widetilde{Y}_{\underline{s}}-\bar{Y}_{\bar{s}}|^2.
	\end{equation}
	For every $v, v'$ such that $v<v'$ and $|v-v'|\leq \Delta$, $(\ref{YbarYtildecont})$ at $v$ and $v'$ yields
	$$\widetilde{Y}_v-\bar{Y}_{v'}=(v'-v)f\big(\underline{v}, \, \bar{X}_{\underline{v}},\,  \E_{\underline{v}} \bar{Y}_{\bar{v}},\, \bar{\zeta}_{\underline{v}}\big)-\int_v^{v'} \bar{Z}_s dW_s+\bar{K}_{v'}-\bar{K}_v$$
	so that taking the conditional expectations w.r.t. $t$ yields
	\begin{align*}
	\E_t|\widetilde{Y}_v-\bar{Y}_{v'}|^2 \leq & 2(v'-v)^2 \E_t f(\bar{\theta}_{\underline{v}})^2 +2 \E_t\left(\int_v^{v'} \bar{Z}_s dWs \right)^2 +2 \E_t(\bar{K}_{v'}-\bar{K}_v)^2\\
	\leq & 2 \Delta^2 \E_t f(\bar{\theta}_{\underline{v}})^2 +2\E_t\left(\int_v^{v'} \bar{Z}_s dWs \right)^2 +2 \E_t(\bar{K}_{v'}-\bar{K}_v)^2.
	\end{align*}
	Since $\bar{K}_v\geq 0$ for every $v \in [0,T]$, we have $-\bar{K}_v < \bar{K}_v$ so that $\bar{K}_{v'}-\bar{K}_v< \bar{K}_{v'}+\bar{K}_v$. Then, owing the fact that $\bar{K}_v$ is non decreasing, $\bar{K}_{v'}-\bar{K}_v\geq 0$ so 
	$$(\bar{K}_{v'}-\bar{K}_v)^2 \leq (\bar{K}_{v'}-\bar{K}_v)(\bar{K}_{v'}+\bar{K}_v) = \bar{K}_{v'}^2-\bar{K}_v^2.$$
	Hence, noting that $ f(\bar{\Theta}_{\underline{s}})=f(\bar{X}_{\underline{s}}, \E_{\underline{s}}\bar{Y}_{\bar{s}}, \bar{\zeta}_{\underline{s}})$ is a composition of the functions $f$, $\bar{y}_{\underline{s}}$ and $\bar{z}_{\underline{s}} $ which are all Lipschitz continuous according to Lemma $\ref{Lipandlg}$ and recalling that if a function $g$ is Lipschitz continuous then it has linear growth i.e. there exists a finite constant $C_0$ such that $g(x) \leq C(1+|x|)$, one has
	$$\E_t|\widetilde{Y}_v-\bar{Y}_{v'}|^2 \leq 2\Delta^2 C_{0} \E_t(1+\sup_{v \leq s \leq v'} |\bar{X}_s|)^2 +2\E_t\left(\int_v^{v'} \bar{Z}_s dWs \right)^2 +2 \E_t(\bar{K}_{v'}^2-\bar{K}_v^2).$$
	Combining this with $(\ref{termedey})$ twice yields
	\begin{align}
	\label{borneY}
	\int_t^{\tau} \E_t |Y_s-\E_{\underline{s}}\bar{Y}_{\bar{s}}|^2 ds \leq & \, 2 \int_t^{\tau} \E_t |\delta Y_s|^2 ds +4\sum_{i=\underline{t}/\Delta}^{(\bar{\tau}/\Delta)-1} \int_{t_i}^{t_{i+1}} \E_t |\bar{Y}_s -\widetilde{Y}_{\underline{s}}|^2ds +4\sum_{i=\underline{t}/\Delta}^{(\bar{\tau}/\Delta)-1} \int_{t_i}^{t_{i+1}} \E_t |\bar{Y}_{\bar{s}}- \widetilde{Y}_{\underline{s}}|^2ds\nonumber \\
	\leq & \, 2 \int_t^{\tau} \E_t |\delta Y_s|^2 ds +8\Delta^2 C_{0} (\tau-t) \E_t(1+\sup_{\underline{s} \leq u \leq \bar{s}} |\bar{X}_u|)^2\nonumber \\
	&+  8 \sum_{i=\underline{t}/\Delta}^{(\bar{\tau}/\Delta)-1} \int_{t_i}^{t_{i+1}} \E_t(\bar{K}_{s}^2-\bar{K}_{\underline{s}}^2)+\E_t(\bar{K}_{\bar{s}}^2-\bar{K}_{\underline{s}}^2)\nonumber \\
	&+ 8\sum_{i=\underline{t}/\Delta}^{(\bar{\tau}/\Delta)-1}\int_{t_i}^{t_{i+1}}\E_t\left(\int_{\underline{s}}^{s} \bar{Z}_u dWu \right)^2ds +8\sum_{i=\underline{t}/\Delta}^{(\bar{\tau}/\Delta)-1}\int_{t_i}^{t_{i+1}}\E_t\left(\int_{\underline{s}}^{\bar s} \bar{Z}_u dWu \right)^2ds\nonumber  \\
	\leq & \, 2 \int_t^{\tau} \E_t |\delta Y_s|^2 ds +8\Delta(\bar{\tau}-\underline{t})\E_t|\bar K_T|^2+8\Delta^2 C_{0}C_f (\tau-t) \E_t(1+\sup_{\underline{s} \leq u \leq \bar{s}} |\bar{X}_u|)^2\nonumber \\
	&+ 8\sum_{i=\underline{t}/\Delta}^{(\bar{\tau}/\Delta)-1}\int_{t_i}^{t_{i+1}}\left(\E_t\Big(\int_{\underline{s}}^{s} \bar{Z}_u dWu \Big)^2+\E_t\Big(\int_{\underline{s}}^{\bar s} \bar{Z}_u dWu \Big)^2\right)ds 
	\end{align}
	where we used the fact that $\bar{K}_t$ is a non-decreasing positive process so for every $t\in [0,T]$, $\bar{K}_t< \bar{K}_T$ and the fact that $\sup_{\underline{s} \leq u \leq s} \alpha_u \leq \sup_{\underline{s} \leq u \leq \bar{s}} \alpha_u$. \\
	
	Thirdly, 
	\begin{align*}
	\E_t\int_t^{\tau} |Z_s -\bar{\zeta}_{\underline{s}}|^2ds \, &\leq \, \sum_{i=\underline{t}/\Delta}^{(\bar{\tau}/\Delta)-1} \E_t \int_{t_i}^{t_{i+1}}|Z_s -\bar{\zeta}_{\underline{s}}|^2ds  \\
	& \leq \, \sum_{i=\underline{t}/\Delta}^{(\bar{\tau}/\Delta)-1} \E_t \left(4\int_{t_i}^{t_{i+1}}|Z_s -Z_{\underline{s}}|^2ds +4\int_{t_i}^{t_{i+1}}|Z_{\underline{s}} -{\zeta}_{\underline{s}}|^2ds +2\int_{t_i}^{t_{i+1}}|{\zeta}_{\underline{s}} -\bar{\zeta}_{\underline{s}}|^2ds   \right).
	\end{align*}
	By the definitions $(\ref{zeta})$ and $(\ref{zetabar})$ of $\zeta_s$ and $\bar{\zeta}_s$, we have 
	\begin{align*}
	|Z_{\underline{s}} -{\zeta}_{\underline{s}}|^2 =\Big|Z_{\underline{s}} -\frac{1}{\Delta}\E_{\underline{s}} \int_{\underline{s}}^{\bar{s}} Z_s ds \Big|^2=\frac{1}{\Delta^2} \Big|\E_{\underline{s}}\int_{\underline{s}}^{\bar{s}} (Z_s -Z_{\underline{s}}) \Big|^2 \leq \frac{1}{\Delta} \E_{\underline{s}}\int_{\underline{s}}^{\bar{s}}|Z_s -Z_{\underline{s}}|^2 ds
	\end{align*}
	where the last inequality was obtained by using Cauchy-Schwarz inequality. Hence, we use Fubini's Theorem to deduce 
	$$\int_{t_i}^{t_{i+1}} |Z_{\underline{s}} -{\zeta}_{\underline{s}}|^2ds\leq  \E_{\underline{s}} \int_{\underline{s}}^{\bar{s}}|Z_s -Z_{\underline{s}}|^2 ds.$$ 
	Likewise, $$|{\zeta}_{\underline{s}} -\bar{\zeta}_{\underline{s}}|^2 =\big|  \frac{1}{\Delta} \E_{\underline{s}} \int_{\underline{s}}^{\bar{s}} (Z_s- \bar{Z}_s)\big|^2 \leq \frac{1}{\Delta} \E_{\underline{s}}\int_{\underline{s}}^{\bar{s}}|Z_s -\bar{Z}_s|^2 ds $$so that
	$$ \int_{t_i}^{t_{i+1}}|{\zeta}_{\underline{s}} -\bar{\zeta}_{\underline{s}}|^2 ds  \leq \E_{\underline{s}} \int_{\underline{s}}^{\bar{s}}|Z_s -\bar{Z}_s|^2 ds.$$
	Consequently, 
	\begin{equation}
	\label{borneZ}
	\E_t\int_t^{\tau} |Z_s -\bar{\zeta}_{\underline{s}}|^2ds \leq 8\E_t \int_{\underline{t}}^{\bar{\tau}} |Z_s -Z_{\underline{s}}|^2ds +2 \E_t \int_{\underline{t}}^{\bar{\tau}} |Z_s -\bar{Z}_s|^2ds.
	\end{equation}
	
	At this stage, we merge the 3 equations $(\ref{borneX})$, $(\ref{borneY})$ and $(\ref{borneZ})$ with $(\ref{termedef1})$ and take the expectation to obtain 
	\begin{align*}
	\E\Big(|\delta Y_t|^2+\int_t^{\tau} |\delta Z_t|^2 \Big) \leq & \Delta \Big(C_{h,g,b,T,\sigma}+\frac{6[f]}{\alpha}\big( C_X+C_{0} (\bar{\tau}-\underline{t})\big)\E(1+\sup_{\underline{s} \leq u \leq \bar{s}} |\bar{X}_u|)^2+\frac{12[f]}{\alpha}(\bar{\tau}-\underline{t}) \E|\bar K_T|^2\Big)\\
	&+\left(2\alpha[f]_{\rm Lip}+\frac{6[f]_{\rm Lip}}{\alpha}\right)\int_t^{\tau} \E|\delta Y_s|^2 ds + \frac{6[f]_{\rm Lip}}{\alpha} \int_t^{\tau} \E|X_{\underline{s}}-\bar{X}_{\underline{s}}|^2ds\\
	&+\frac{24[f]_{\rm Lip}}{\alpha}\E \int_{\underline{t}}^{\bar{\tau}} |Z_s -Z_{\underline{s}}|^2ds+\frac{6[f]_{\rm Lip}}{\alpha}\E \int_{\underline{t}}^{\bar{\tau}} |Z_s -\bar{Z}_s|^2ds \\
	&+ \frac{24[f]_{\rm Lip}}{\alpha}\sum_{i=\underline{t}/\Delta}^{(\bar{\tau}/\Delta)-1}\int_{t_i}^{t_{i+1}}\left(\E\int_{\underline{s}}^{s} |\bar{Z}_u|^2 du +\E\int_{\underline{s}}^{\bar s} |\bar{Z}_u|^2 du\right)ds .
\end{align*}
	As stated in $(\ref{assumptionRBSDE})$, $\E|\bar K_T|^2 \leq \gamma_0$ and by the classical properties of the Euler scheme, we have 
	$$\E(1+\sup_u|\bar X_u|)^2 \leq C_{b,T,\sigma}(1+|x_0|)^2\qquad \mbox{and} \qquad \E|X_{\underline{s}}-\bar{X}_{\underline{s}}|^2 \leq C_{b,T,\sigma}\Delta(1+|x_0|)^2.$$
	Moreover, 
	$$\sum_{i=\underline{t}/\Delta}^{(\bar{\tau}/\Delta)-1}\int_{t_i}^{t_{i+1}}\left(\E\int_{\underline{s}}^{s} |\bar{Z}_u|^2 du +\E\int_{\underline{s}}^{\bar s} |\bar{Z}_u|^2 du\right)ds \leq \sum_{i=\underline{t}/\Delta}^{(\bar{\tau}/\Delta)-1}\int_{t_i}^{t_{i+1}} 2\Delta \E \sup_{\underline{s}\leq u \leq \overline{s}}|\overline{Z}_u|^2 \leq 2\Delta^2 (\bar{\tau}-\underline{t})\gamma_1.$$
	Hence, if we consider $\alpha=6[f]_{\rm Lip}$ and denote $\bar{C}=C_{h,g,b,T,\sigma}+C_X+(\bar{\tau}-\underline{t})\Big(2\gamma_0+4\gamma_1 \Delta_{\max}+(1+C_{0})C_{b,T,\sigma}(1+|x_0|)^2 \Big)$ and $\widetilde{C}=1+12[f]_{\rm Lip}^2$, then we obtain
	\begin{align*}
	\E\left(|\delta Y_t|^2+\int_t^{\tau} |\delta Z_s|^2 \right) \leq & \Delta \bar{C} +\widetilde{C} \int_t^{\tau} \E|\delta Y_s|^2 ds+4\E \int_{\underline{t}}^{\bar{\tau}} |Z_s -Z_{\underline{s}}|^2ds\\
	&+ \E \int_{\underline{t}}^{t} |Z_s -\bar{Z}_s|^2ds+\E \int_{t}^{\tau} |Z_s -\bar{Z}_s|^2ds+\E \int_{\tau}^{\bar{\tau}} |Z_s -\bar{Z}_s|^2ds .
	\end{align*}
	Consequently, 
	$$\E|\delta Y_t|^2 \leq \widetilde{C} \int_t^{\tau} \E|\delta Y_s|^2 ds +K$$
	where $K=\Delta \bar{C}+4\E \int_{\underline{t}}^{\bar{\tau}} |Z_s -Z_{\underline{s}}|^2ds+ \E\int_{\underline{t}}^{t} |Z_s -\bar{Z}_s|^2ds+\E \int_{\tau}^{\bar{\tau}} |Z_s -\bar{Z}_s|^2ds .$ Let us denote $f(t)=\E|\delta Y_t|^2$. This function satisfies 
	$$f(t) \leq \widetilde{C} \int_t^{\tau} f(s)ds +K.$$
	We consider $g(t)=f(T-t)$ which satisfies also 
	$$g(t) \leq \widetilde{C} \int_0^t g(s)ds +K.$$
	Hence, Gronwall's Lemma yields $g(t) \leq e^{\widetilde{C}t} K$ so that 
	$$f(t) \leq e^{\widetilde{C}(T-t)}K.$$
	Consequently, 
	$$\E|Y_t -\bar{Y}_t|^2 \leq e^{\widetilde{C}(T-t)} \left(\Delta \bar{C}+4\E\int_{0}^{T} |Z_s -Z_{\underline{s}}|^2ds+ \E \int_{\underline{t}}^{t} |Z_s -\bar{Z}_s|^2ds+\E \int_{\tau}^{\bar{\tau}} |Z_s -\bar{Z}_s|^2ds \right).$$
	In particular, if $t=t_k$ and $\tau=t_{k'}$, $k, k' \in \{1,\ldots,n\}$, then $\underline{t}=t$ and $\bar{\tau}=\tau$ so 
	$$\E|Y_k -\bar{Y}_k|^2 \leq e^{\widetilde{C}(T-t_k)} \left(\Delta \bar{C}+4\int_{0}^{T} \E|Z_s -Z_{\underline{s}}|^2ds+ 0+0 \right).$$
	This completes the proof.
	\hfill $\square$
	\end{refproof}
\end{document}